\theoremstyle{plain}
\numberwithin{equation}{section}
\newtheorem{theorem}{Theorem}[section]
\newtheorem{proposition}[theorem]{Proposition}
\newtheorem{lemma}[theorem]{Lemma}
\newtheorem{remark}[theorem]{Remark}
\newtheorem{remarks}[theorem]{Remark}
\newtheorem{definition}[theorem]{Definition}
\newcommand{\be}{\begin{equation}}
\newcommand{\ee}{\end{equation}}
\newcommand{\uno}{\mathds{1}}
\newcommand{\e}{\varepsilon}
\newcommand{\ep}{\epsilon}
\newcommand{\ov}{\overline}
\newcommand{\R}{\mathbb R}
\newcommand{\C}{\mathbb C}
\newcommand{\Z}{\mathbb Z}
\newcommand{\N}{\mathbb N}
\newcommand{\T}{\mathbb T}
\newcommand{\s }{\sigma }
\newcommand{\ii }{{\rm i} }
\newcommand{\x }{\xi }
\newcommand{\pa}{\partial}
\newcommand{\opbw}{{Op^{{bw}}}}
\def\hat{\widehat}
\def\bar{\overline}
\def\cal{\mathcal}
\def\ba{\begin{aligned}}
\def\ea{\end{aligned}}
\def\beginm{\begin{multline}}
\def\endm{\end{multline}}
\providecommand{\vect}[2]{{\bigl[\begin{smallmatrix}#1\\#2\end{smallmatrix}\bigr]}}   
\providecommand{\sm}[4]{{\bigl[\begin{smallmatrix}#1&#2\\#3&#4\end{smallmatrix}\bigr]}}
\def\l@subsection{\@tocline{2}{0pt}{2.5pc}{5pc}{}}
\def\l@subsubsection{\@tocline{3}{0pt}{4.5pc}{5pc}{}}
\renewcommand\tocchapter[3]{%
  \indentlabel{\@ifnotempty{#2}{\ignorespaces#2.\quad}}#3%
}
\newcommand\@dotsep{4.5}
\def\@tocline#1#2#3#4#5#6#7{\relax
  \ifnum #1>\c@tocdepth 
  \else
    \par \addpenalty\@secpenalty\addvspace{#2}%
    \begingroup \hyphenpenalty\@M
    \@ifempty{#4}{%
      \@tempdima\csname r@tocindent\number#1\endcsname\relax
    }{%
      \@tempdima#4\relax
    }%
    \parindent\z@ \leftskip#3\relax \advance\leftskip\@tempdima\relax
    \rightskip\@pnumwidth plus1em \parfillskip-\@pnumwidth
    #5\leavevmode\hskip-\@tempdima{#6}\nobreak
    \leaders\hbox{$\m@th\mkern \@dotsep mu\hbox{.}\mkern \@dotsep mu$}\hfill
    \nobreak
    \hbox to\@pnumwidth{\@tocpagenum{#7}}\par
    \nobreak
    \endgroup
  \fi}
\def\l@subsection{\@tocline{2}{0pt}{2.5pc}{5pc}{}}
\begin{document}
\bibliographystyle{plain}

\title[Growth of Sobolev norms for DNLS on generic tori]{Quadratic lifespan and
 growth of Sobolev norms for derivative Schr\"odinger equations on generic tori}

\date{}


\author{Roberto Feola}
\address{\scriptsize{Dipartimento di Matematica, Universit\`a degli Studi di Milano, Via Saldini 50, I-20133}}
\email{roberto.feola@unimi.it}

\author{Riccardo Montalto}
\address{\scriptsize{Dipartimento di Matematica, Universit\`a degli Studi di Milano, Via Saldini 50, I-20133}}
\email{riccardo.montalto@unimi.it}

\thanks{ {\em Acknowledgements.} Riccardo Montalto is supported by INDAM-GNFM. The authors warmly thank Dario Bambusi for many useful discussions and comments.}  
 
\keywords{Derivative  Schr\"odinger equations, Para-differential calculus, Energy estimates, Normal Form Theory}

\subjclass[2010]{35Q55, 	37K10, 	35S50,   }


\begin{abstract}   
We consider a family of Schr\"odinger equations with unbounded Hamiltonian quadratic 
nonlinearities on a generic tori of dimension $d\geq1$.
We study the behaviour of high Sobolev norms $H^{s}$, $s\gg1$, of solutions
with initial conditions in $H^{s}$ whose 
 $H^{\rho}$-Sobolev norm, $1\ll\rho\ll s$, is smaller than $\e\ll1$.
We provide a control of the $H^{s}$-norm over a time interval of order $O(\e^{-2})$.

Due to the lack of conserved quantities controlling high Sobolev norms,
the key ingredient of the proof is the construction of a modified energy 
equivalent to 
the ``low norm'' $H^{\rho}$ (when $\rho$ is sufficiently high) over a nontrivial time interval
$O(\e^{-2})$.
This is achieved by means of normal form techniques for quasi-linear equations
involving para-differential calculus.
The main difficulty
is to control the possible loss of derivatives due to the small divisors 
arising form  three waves interactions.
By performing ``tame'' energy estimates we obtain 
upper bounds for higher Sobolev norms $H^{s}$.

\end{abstract}

\maketitle
\tableofcontents

\section{Introduction}
We consider the derivative Schr\"odinger equation (DNLS)
\begin{equation}\label{main}
 \partial_t u = \ii (\Delta_g u - m u  -  {\cal Q}(u, \bar u))\,,\quad u=u(t,x)\,,\;\;
 x\in \mathbb{T}^{d}:=(\mathbb{R}/2\pi\mathbb{Z})^{d}\,,\quad d\geq1\,,
\end{equation}
where $m > 0$ is the mass, the operator $\Delta_{g}$ is defined by linearity as
\begin{equation}\label{matmetrica}
\Delta_{g}e^{\ii j\cdot x}=-\|j\|_{g}^{2}e^{\ii j\cdot x}\,,\quad
\|j\|_{g}^{2}:=G j\cdot j\,,\quad  j\in \mathbb{Z}^{d}\,,
\end{equation}
with $G = (g_{i j})_{i, j = 1, \ldots, d}$  a strictly positive definite, symmetric, matrix, i.e. $G \xi \cdot \xi \geq c_0 |\xi|^2$ 
for any $\xi \in \Z^d \setminus \{ 0\}$, and $c_0 > 0$. 
The nonlinearity  $Q(u,\bar{u})$ has the form
\begin{equation}\label{nonlinQQQ}
Q(u,\bar{u}):=(\pa_{\bar{u}}f)(u,\nabla u)-\sum_{i=1}^{d}\pa_{x_{i}} (\pa_{\ov{u_{x_i}}}f)(u,\bar{u})\,,
\end{equation}
where we denoted $\pa_{u}:=(\pa_{{\rm Re}(u)}-\ii \pa_{{\rm Im}(u)})/2$ and  
$\pa_{\bar{u}}:=(\pa_{{\rm Re}(u)}+\ii \pa_{{\rm Im}(u)})/2$
the Wirtinger derivatives
and where
$f(y_0,y_1,\ldots,y_{d})\in  C^{\infty}(\mathbb{C}^{d+1};\mathbb{R})$ 
(in the \emph{real} sense, i.e. $f$ is $C^{\infty}$
  as function of ${\rm Re}(y_i)$, ${\rm Im}(y_i)$) 
  is a homogenenous polynomial of degree $3$ 
satisfying 
\begin{equation}\label{derord2}
\pa_{y_{i}}\pa_{\ov{y_{j}}}f=\pa_{\ov{y_{i}}}\pa_{\ov{y_{j}}}f\equiv0\,,
\quad \forall\, i,j=1,\ldots,d\,,
\quad \forall\,
(y_0,y_1,\ldots,y_d)\in \mathbb{C}^{d+1}\,.
\end{equation}
Thanks to \eqref{nonlinQQQ} one can note that 
the equation is Hamiltonian, namely \eqref{main} can be written as
\[
\partial_t u = - \ii \nabla_{\bar u} H(u, \bar u)\,,
\qquad H (u, \bar u) := \int_{\T^d} 
(\Lambda u)\cdot\bar{u}\,dx
+ \int_{\T^d} f(u, \bar u)\, d x\,,
\]
where\footnote{ 
$\nabla_{u}:=(\nabla_{{\rm Re}(u)}-\ii \nabla_{{\rm Im}(u)})/2$ and  
$\nabla_{\bar{u}}:=(\nabla_{{\rm Re}(u)}+\ii \nabla_{{\rm Im}(u)})/2$, 
$\nabla$ denotes the $L^{2}$-gradient.}
$\Lambda:=\Lambda(D):=-\Delta_{g}+m$ is the Fourier multiplier
with symbol
\begin{equation}\label{def Lambda xi}
\Lambda(\x):=\|\x\|^{2}_{g}+m:=G \x\cdot \x+m\,,\quad \x\in \mathbb{Z}^{d}\,.
\end{equation}
The main result of the paper is the following.

\begin{theorem}\label{teo principale}
For almost every $m \in (0, + \infty)$ the following holds.
There exists $\rho \gg 0$ large enough and $\e \equiv \e(\rho) \ll 1$ small enough such 
that for any initial datum $u_0 \in H^{\rho}(\mathbb{T}^{d};\mathbb{C})$, 
$\| u_0 \|_{H^{\rho}} \leq \e$, 
there exists a unique solution 
$u \in C^0\big( [- T_\rho, T_\rho], H^\rho(\mathbb{T}^{d};\mathbb{C}) \big)$  
of \eqref{main} with 
$u(0, \cdot) = u_0$, with
\begin{equation}\label{stimathm1}
\| u(t) \|_{H^{\rho}} \lesssim_\rho \e\,,\quad \forall\, t\in[-T_{\rho}, T_{\rho}]\,,\quad
T_\rho := \mathtt c(\rho) \e^{- 2}
\end{equation}
for some $\mathtt{c}(\rho)\leq1$.
Moreover, assume in addition that $u_0 \in H^s(\mathbb{T}^{d};\mathbb{C})$, 
$s \geq \rho$ (without any smallness assumption of $\| u_0 \|_{H^{s}}$). 
Then $u \in C^0\big( [- T_{\rho}, T_{\rho}], H^s(\mathbb{T}^{d};\mathbb{C}) \big)$ of \eqref{main} 
which remains bounded on $[- T_{\rho},  T_{\rho}]$, namely
\begin{equation}\label{stimathm2}
\| u(t) \|_{H^{s}} \lesssim_s \| u_0 \|_{H^{s}}\,, \quad \forall t \in [- T_{\rho}, T_{\rho}]\,. 
\end{equation}
\end{theorem}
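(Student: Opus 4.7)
The strategy is to construct a modified energy $\mE_\rho(u)$, defined for $\rho \gg 1$ but independent of $s$, such that $\mE_\rho(u) \sim \| u \|_{H^\rho}^2$ in the small ball $\{\| u \|_{H^\rho} \le \e\}$ and
\[
\frac{d}{dt} \mE_\rho(u(t)) \lesssim_\rho \| u(t) \|_{H^\rho}^4
\]
along solutions of \eqref{main}. A standard bootstrap then promotes the local existence time to $T_\rho = \mathtt{c}(\rho) \e^{- 2}$ and gives \eqref{stimathm1}.

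To build $\mE_\rho$ I would first Bony-paralinearize the Hamiltonian vector field. Using the structure \eqref{nonlinQQQ}--\eqref{derord2}, equation \eqref{main} can be recast as
\[
\pa_t u = \ii\, \opbw\bigl( a_2(u;\x) + a_1(u;\x) + a_0(u;\x) \bigr) u + R(u) u,
\]
where $a_2$ has order $2$ and is a real quadratic correction of $\Lambda(\x)$, $a_1$ has order $1$ and (thanks to the Hamiltonian symmetries encoded in \eqref{derord2}) purely imaginary average, $a_0$ has order $0$, and $R(u)$ is a smoothing remainder cubic in $u$. I would then perform a chain of conjugations by flows of the form $\exp\bigl( \ii\, \opbw(M_j(U)) \bigr)$, chosen so as to cancel the unbounded quadratic contributions to $a_2, a_1, a_0$ order by order. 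This is the quasi-linear paradifferential substitute for the classical Poincar\'e--Birkhoff procedure: working at the symbol level avoids the derivative loss that any Lie transform generated by an unbounded homogeneous Hamiltonian would produce.

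After these conjugations the equation reads $\pa_t v = \ii \Lambda v + Z_2(v) + \text{(cubic)}$, where $Z_2$ is a bounded paradifferential operator whose symbol is resonant in Fourier. The final step is to remove $Z_2$ by a Lie transform generated by a \emph{smoothing} Hamiltonian. This produces the three-wave divisors
\[
\Lambda(j_1) - \Lambda(j_2) - \Lambda(j_3), \qquad j_1 = j_2 + j_3,
\]
together with analogous combinations involving conjugate indices. To handle them I would prove, for almost every mass $m \in (0, + \infty)$ and every strictly positive definite $G$, a Diophantine lower bound of the form $|\Lambda(j_1) \pm \Lambda(j_2) \pm \Lambda(j_3)| \gtrsim \langle \min_i |j_i| \rangle^{-\tau}$ on the non-resonant triples, via a Borel--Cantelli excision in the parameter $m$. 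Since the paradifferential cut-off restricts to high--high--low configurations, the $\tau$-derivative loss is absorbed by choosing $\rho$ sufficiently large; the resonant triples (corresponding to mass and momentum conservation) produce integrable monomials that are harmless in the modified energy.

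I expect the main obstacle to be precisely the coexistence of \emph{unbounded} quadratic terms of order $2$ in $Q$ with the small-divisor problem: an ordinary homogeneous Birkhoff normal form would lose two derivatives per step and is unavailable, while the smoothing Birkhoff step where the divisors actually appear must be carried out consistently on top of the preceding paradifferential reduction. For the higher-norm statement \eqref{stimathm2} I would then apply the \emph{same} transformations but state each estimate in tame form -- linear in $\| u \|_{H^s}$ with a constant depending only on $\| u \|_{H^\rho}$ -- and close a linear Gronwall inequality on $\| u(t) \|_{H^s}^2$: since the forcing is quadratic in $\| u \|_{H^\rho} \lesssim \e$, integration over $[- T_\rho, T_\rho]$ multiplies $\| u_0 \|_{H^s}$ only by a factor $\exp(C_s \e^2 T_\rho) = O(1)$, which yields \eqref{stimathm2}.
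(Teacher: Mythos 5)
Your overall outline (paralinearize, reduce by paradifferential conjugations, then a smoothing Birkhoff step, then energy estimates) matches the paper's architecture, but there is a genuine gap at the decisive point.

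You claim that after the paradifferential reductions the equation becomes $\pa_t v = \ii\Lambda v + Z_2(v) + \text{(cubic)}$, with ``$Z_2$ a bounded paradifferential operator whose symbol is resonant in Fourier'', and that ``the final step is to remove $Z_2$ by a Lie transform generated by a \emph{smoothing} Hamiltonian''. This cannot work: a conjugation by $\mathrm{Id}+\text{(smoothing)}$ changes a paradifferential operator only by smoothing terms, so it cannot cancel a non-smoothing $Z_2$. What the paper actually does in Proposition~\ref{BLM-paradiff} is precisely \emph{not} remove the near-resonant paradifferential operator: the diagonal symbol is split as $\langle a\rangle + a^{(\mathrm{nr})} + a^{(\mathrm{res})} + a^{(S)}$ according to the cut-offs on $(\xi;k)=G\xi\cdot k$; only $a^{(\mathrm{nr})}$ is eliminated via a homological equation for $\{\Lambda,g\}$, while the parts $\langle a\rangle$ and $a^{(\mathrm{res})}$ (where $(\xi;k)$ is tiny and the Poisson-bracket divisor is uninvertible) are accumulated into the order-$1$ operator ${\cal Z}(U)$, which \emph{survives} the normal form. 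The crucial ingredient, absent from your plan, is then Lemma~\ref{NF flow Lemma} (imported from \cite{BLM-growth}): the flow ${\cal U}_z(\tau,t)$ of $\pa_t w + \ii\opbw(\Lambda+z)w=0$, with $z$ a symbol in normal form in the sense of Definition~\ref{simboli forma normale}, is uniformly bounded on $H^s$ for $t,\tau\in[-T,T]$. Only with this bound in hand does the Duhamel formula \eqref{W t Duhamel stima energia} yield the closed estimate $\|W(t)\|_\rho\lesssim\|W_0\|_\rho+\e^2\int_0^t\|W\|_\rho$. Without it, a naive $H^s$-energy estimate on the reduced equation loses $1-\delta>0$ derivatives through the commutator $[\langle D\rangle^s,\opbw(z)]$, and your assertion that the surviving resonant part ``produces integrable monomials that are harmless in the modified energy'' is unsubstantiated. (A direct modified-energy approach could perhaps be salvaged by using the $G$-adapted weight $\Lambda^{s/2}$ so that the Poisson bracket $\{\Lambda^{s/2},z\}$ again sees the small factor $(\xi;k)$, but this is a nontrivial argument you would need to make, and it would still essentially re-prove the substance of Lemma~\ref{NF flow Lemma}.)

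There is a second, related inaccuracy: you posit a three-wave lower bound $|\Lambda(j_1)\pm\Lambda(j_2)\pm\Lambda(j_3)|\gtrsim\langle\min_i|j_i|\rangle^{-\tau}$. For a generic $G$ the bound actually established in Lemma~\ref{condizioni di non-risonanza} is the weaker $|\phi^{\sigma,\sigma'}(\xi,k)|\geq\gamma\langle\xi\rangle^{-\tau}\langle k\rangle^{-\tau}$, which costs $\tau$ derivatives on the \emph{high} frequency too, not only the low one. This is precisely why the Birkhoff step in Proposition~\ref{BNF step} is confined to the $N$-smoothing remainder ${\cal Q}_l\in{\cal O}{\cal S}_1(N)$ with $N=\tau+3$: the $\tau$-derivative loss from the divisor is absorbed by the $N$-smoothing gain (Lemma~\ref{stime eq omologica birkhoff} yields ${\cal F}\in{\cal O}{\cal S}_1(N-\tau)$), whereas applying the same step to a non-smoothing operator would strictly worsen its order. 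Finally, a small point: by \eqref{derord2} the paralinearized nonlinear part ${\cal A}(U)$ in Proposition~\ref{NLSparapara} has order $1$, not $2$; there is no ``real quadratic correction of $\Lambda$'' at order $2$.
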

\noindent
Some comments on the result above are in order.

\vspace{0.2em}
First notice that equation \eqref{main} can be seen as a nonlinear Schr\"odinger equation
posed on a torus  $\mathbb{T}^{d}_{\Gamma}:=\mathbb{R}^{d}/\Gamma$ with 
arbitrary periodicity lattice $\Gamma$.
In view of the assumptions in \eqref{nonlinQQQ}-\eqref{derord2}, we have that 
$Q(u,\bar{u})$ is a Hamiltonian nonlinearity containing \emph{at most}
one spatial derivative of the unknown $u(t,x)$.
Hence, as far as we know, Theorem \ref{teo principale}
provides the first long existence results of solutions for nonlinear Schr\"odinger equations 
with derivatives and on a manifold different from the \emph{square} torus. 
 
We also could consider nonlinearities of order $m$ with $m < 2$ 
but we preferred to write the paper for nonlinearities depending on $\nabla u$, 
since they are more physical (it is basically the case of magnetic potentials). 

The bound \eqref{stimathm1} shows indeed that solutions
evolving from sufficiently regular initial data fo size $\e\ll1$
remain small 
over a time interval of size $O(\e^{-2})$. This lifespan which is strictly larger than
the time of existence provided by local theory which is of order $O(\e^{-1})$.
In addition to this our result provide a control, on the same time interval, of the growth
of high Sobolev norms of solutions of \eqref{main}.
The bound \eqref{stimathm2} actually shows that 
the $H^{s}$-Sobolev norm of a solution remains \emph{bounded}
by only requiring a smallness conditions on a \emph{low} norm 
of the initial datum.
The second part of the Theorem is a consequence of sharp tame \emph{\`a priori} 
estimates on the solutions. Unfortunately the equation has no conserved quantities 
which control for every time the Sobolev norms $H^{\rho}$
with $\rho>1$. Hence we are only able to obtain the bound \eqref{stimathm2}
over a long (but finite) time interval, by constructing a modified energy for the $H^{\rho}$-norm
with normal forms techniques.
The result we obtained is in the same spirit of \cite{DelMa}
by Delort-Masmoudi. 

We require the Hamiltonian assumption on the nonlinearity
in order to guarantee the well-posedness of the Cauchy problem associated to
\eqref{main} at least for short time. Actually this hypothesis could be weakened.
For more details, we refer for instance to the introduction of \cite{Feola-Iandoli-Loc}.

We also remark that the mass parameter $m>0$ in \eqref{def Lambda xi}
will be used to provide suitable lower bounds on  \emph{three wave interactions}.

\vspace{0.5em}
\noindent
{\bf Some related literature.}
We now present some known results on the long time existence and  stability 
for derivative Schr\"odinger equations.

\vspace{0.2em}
\noindent
\emph{Local well-posedness.}
Many authors considered 
equations of the type \eqref{main} (even without the assumption \eqref{derord2}) 
in the \emph{Euclidean case} (i.e. when $x\in \mathbb{R}^{d}$).
The first existence result is due to Poppenberg in \cite{Pop1} for a special model in one dimension,
later extended by Colin \cite{colin} to any dimension.
A more general class of quasilinear Schr\"odinger equation is studied in the pioneering work of
Kenig-Ponce-Vega \cite{KPV}. We also mention a recent paper \cite{MMT3} by 
Marzuola-Metcalfe-Tataru  (see also references therein) which optimize the result in \cite{KPV}
in terms of the regularity of the initial data.
The situation drastically changes when the equation is posed on a compact manifold.
Indeed, 
Christ in \cite{Cris} provides examples  of Schr\"odinger equations with derivatives  
which are ill-posed on the 
circle $\mathbb{S}^1$ and well posed on $\R$. 
We mention that a local existence 
has been obtained 
on the circle 
 by Baldi-Haus-Montalto in \cite{BHM}  
and by Feola-Iandoli  in \cite{Feola-Iandoli-Loc} with different techniques.
In \cite{Feola-Iandoli-local-tori} the authors extend the latter results to any 
\emph{squared} $d$-dimensional tori. 
Our assumption in \eqref{nonlinQQQ} on the nonlinearity guarantees 
that the local well-posedness for \eqref{main}
can be obtained in the same spirit of \cite{Feola-Iandoli-local-tori}.

\vspace{0.2em}
\noindent
\emph{Global well-posedness.}
All the aforementioned results regards the \emph{local in time} well-posedness for 
quasilinear Schr\"odigner
equations. The global well-posedness has been established on $\R^2$ and $\R^3$ 
by de Bouard-Hayashi-Saut \cite{saut-glob} 
in dimension two and three for small data on  a model quasilinear Schr\"odinger.
In \cite{saut-glob}, dispersive properties of the flow are exploited in order to obtain 
a control of the Sobolev norms for long time. 
We also mention the paper \cite{MuPu} by Murphy-Pusateri
about the almost global existence for a 
non-gauge-invariant cubic nonlinear Schr\"odinger equation on $\mathbb{R}$.

\vspace{0.2em}
\noindent
\emph{Long time regularity and normal forms.}
On tori (or more in general on compact manifolds) 
there are no dispersive effects that could help in controlling the behaviour of 
the solutions
for long times. In order to extend the lifespan of solution
we use the powerful tool of normal form theory.
This approach has been successfully and widely used 
in the past starting form the study of semi-linear PDEs.
Without trying to be exhaustive we quote
Bourgain \cite{Bou96}, Bambusi \cite{Bam03} 
and Bambusi-Gr\'ebert \cite{BG}
where the authors
considered the Klein-Gordon equation on the circle.
They proved almost global existence in the sense that, for any $N\geq1$ and any initial datum in $H^{s}(\mathbb{T})$
of size $\e\ll1$ with $s\gg1$ large enough, the solution exist and its $H^{s}$-Sobolev norm
remains small over a time interval of size  $O(\e^{-N})$.
Similar results have  been obtained for semilinear equations also in higher space dimension.
We refer, for instance, to \cite{BDGS} 
by Bambusi-Delort-Gr\'ebert-Szeftel which considered PDEs on Zoll manifolds
(see also \cite{FG,DelortSzeft1}).
Normal form theory for 
quasi-linear equations have been constructed more recently. We quote 
Delort \cite{Delort-circle, Delort-sphere} for the Klein-Gordon on $\mathbb{S}^d$ and
Berti-Delort \cite{BD} for the gravity capillary water waves on $\mathbb{T}$.
For equations like \eqref{main}
 we mention \cite{Feola-Iandoli-Long, Feola-Iandoli-Totale} where it is exploited 
 the fact that (following the ideas of \cite{BD}) quasi-linear Schr\"odinger 
equations may be   reduced to constant coefficients 
trough a \emph{para-composition} generated by a diffeomorphism of the circle.

\vspace{0.2em}
\noindent
\emph{Normal forms on irrational tori.}
All the papers mentioned above have in common that the spectrum of the
linearized problem at zero has ``good separation'' properties.
This fact  depends on the geometry of the eigenvalues of the Laplace-Beltrami operator.
On irrational tori, for instance, differences of eigenvalues can accumulate to zero.
In this case, one typically gets very weak lower bounds on ``small divisors'' arising from
$n$-waves interactions (see Appendix \ref{sezione non rionanza}). 
The same problem occurs 
for the Klein Gordon equation posed on $\mathbb{T}^{d}$, $d\geq2$.
In dealing with this problem is out of reach (at the moment), 
but nevertheless one can obtain partial results.
We refer to Delort \cite{Delort-Tori},
 Fang and Zhang \cite{fang}, 
Zhang \cite{Zhang} for the Klein-Gordon, 
 Imekraz in \cite{Im} for the Beam equation on $\mathbb{T}^{2}$
 and
 Feola-Gr\'ebert-Iandoli \cite{FGI20}.
In this last case a special class of quasi linear Klein Gordon equation is considered. 
We finally quote  the remarkable work on multidimensional periodic water wave
by Ionescu-Pusateri \cite{IPtori}.

\vspace{0.2em}
\noindent
\emph{Growth of Sobolev norms for PDEs on tori.} For linear Schr\"odinger equations 
with time dependent potentials on tori, 
there are several results providing un upper bound $t^\e$ 
for the high Sobolev norms of the solutions. 
On rational tori $\T^d$, we mention the results 
of Bourgain \cite{Bou}, \cite{Bou2} and Delort \cite{Delort}. 
This results have been extended on the irrational torus 
by Berti and Maspero in \cite{BM19}. 
In these aforementioned results, the potential is bounded and the proof basically 
relies on the so called {\it Bourgain Lemma}. 
For Schr\"odinger equations on irrational tori with unbounded potentials 
(of order strictly smaller than $2$), the upper bound $t^\e$ on the growth 
of Sobolev norms has been proved in  \cite{BLM-growth}. 
The proof relies on a Pseudo-differential normal form 
and on a careful analysis of the resonant vector field, 
by showing that the flow generated by it is uniformly bounded in time. 

\noindent
For nonlinear Schr\"odinger equations on tori, 
by completely different methods, Bourgain \cite{Bourgain-libro} 
proved an upper bound $t^s$ for the $H^s$-norm of the solutions of 
nonlinear Schr\"odinger equations on $\T^2$. 
This result has been also generalized on more general manifolds 
in \cite{PTV} (see also references therein).

\vspace{0.5em}
\noindent
{\bf Plan of the paper and scheme of the proof.}
In the remaining part of the introduction, we briefly explain the strategy of our proof. 

\noindent
In order to prove a time of existence of size $O(\e^{- 2})$, 
we need to perform one step of normal form, in order to remove the quadratic terms.  
Hence in Section \ref{sez def simpboli e smoothing} 
we consider symbols which are sums of symbols linear in $u, \overline u$ 
plus symbols which are quadratic in $(u, \overline u)$. 
Similarly we define classes of smoothing operators. 
Since one is able to impose only very weak lower bounds on the three wave interactions 
(cf. Section \ref{sezione non rionanza}), 
the normal form procedure requires to use paradifferential calculus. 
In Section \ref{sez forma normale paradiff}, we construct a change of variables 
$u = \Phi(u)[w]$ ($u(t, x)$ is a smooth solution of \eqref{main} 
defined on a time interval $[- T, T]$) which transforms the equation \eqref{main} 
into another one which has the form 
\[
\partial_t w +\ii (- \Delta_g + m) w + \ii {\rm Op}^{bw}(z(u; x, \xi))w + {\cal R}(u) w = 0
\]
where ${\cal R}(u)$ is a smoothing remainder, 
i.e. $\| {\cal R}(u) w \|_{H^{s + N}} \lesssim_{s, N} \| u \|_{H^{\rho}} \| w \|_{H^{s}}$ for $N \gg 0$, 
$\rho \gg N$, $s \gg N$ and the {\it normal form symbol} $z(U; x, \xi)$ 
is real and it has the property that its Fourier tarnsform $\widehat z(U; k, \xi)$ is non zero if 
\[
 |(\xi; k)| \leq 
\langle \xi  \rangle^{\delta}| k|^{-\tau} \textrm{ and } |k |\leq \langle \xi \rangle^{\ep}\,
\]
cf. Definition \ref{simboli forma normale}. 
This normal form step is essentially a {\it nonlinear analogue} 
of the method developed in \cite{comm-pdes}, \cite{BLM-growth} at a linear level. 

\noindent
At this point, in Section \ref{sec:BNFstep}, 
we perform a Poincar\'e-Birkhoff normal form step in order 
to remove the quadratic terms from the smoothing remainder ${\cal R}(u) w$. 
The loss of derivatives in the estimates of the three wave interactions is then compensated 
by the fact that the remainder is smoothing. In \cite{BLM-growth}, 
it is proved that the flow associated to normal form symbols 
are well defined on $H^s$ and uniformly bounded in time. 
This fact allows in Section \ref{sezione finale stime energia} 
to perform an energy estimate which shows that  
$\| u(t ) \|_{H^{\rho}} \lesssim_\rho \e$ for $t \in [- T_\rho, T_\rho]$ with $T_\rho = O(\e^{- 2})$, 
for some $\rho \gg 1$ large enough, provided the initial datum 
$\| u_0 \|_{H^{\rho}} \leq \e$ is small enough. 
If in addition, the initial datum $u_0 \in H^s$, with $s > \rho$ 
(but with no smallness assumption on $\| u_0 \|_{H^{s}}$), 
a bootstrap argument shows that 
$\| u(t) \|_{H^{s}} \lesssim_s \| u_0 \|_{H^{s}}$ for any $t \in [- T_\rho, T_\rho]$, 
implying that there is no growth of {\it high Sobolev norms} 
over the time interval $[- T_\rho, T_\rho]$. 

\noindent
We finally remark that if one considers the equation \eqref{main} 
with the standard Laplacian, there are no small divisors since if the mass is not an integer, 
the three wave interactions are bounded from below by a constant. 
On the other hand, generically 
(meaning for a generic choice of the matrix $G$ in \eqref{matmetrica}), 
the three wave interactions accumulate to zero and one 
is able to prove only very weak non-resonance conditions, 
see Appendix \ref{sezione non rionanza} for more details.

%

\section{Functional setting}
We denote by $H^{s}(\mathbb{T}^d;\mathbb{C})$
(respectively $H^{s}(\mathbb{T}^d;\mathbb{C}^{2})$)
the usual Sobolev space of functions $\mathbb{T}^{d}\ni x \mapsto u(x)\in \mathbb{C}$
(resp. $\C^{2}$).
We expand a function $ u(x) $, $x\in \mathbb{T}^{d}$, 
 in Fourier series as 
\begin{equation*}
u(x) = \frac{1}{(2\pi)^{{d}/{2}}}
\sum_{n \in \Z^{d} } \hat{u}(n)e^{\ii n\cdot x } \, , \qquad 
\hat{u}(n) := \frac{1}{(2\pi)^{{d}/{2}}} \int_{\mathbb{T}^{d}} u(x) e^{-\ii n \cdot x } \, dx \, .
\end{equation*}
We endow $H^{s}(\mathbb{T}^{s};\mathbb{C})$ with the norm
\begin{equation*}
\|u\|^{2}_{s}: =\|u\|_{H^{s}}^{2}:=(\langle D\rangle^{s}u,\langle D\rangle^{s} u)_{L^{2}}\,, 
\qquad
\langle D\rangle e^{\ii j\cdot x}=\langle j\rangle  e^{\ii j\cdot x}\,,\;\;\; 
\forall \, j\in \mathbb{Z}^{d}\,,
\end{equation*}
where $\langle j\rangle:=\sqrt{|j|^{2}+1}$ and $(\cdot,\cdot)_{L^{2}}$ denotes the standard complex $L^{2}$-scalar product
 \begin{equation}\label{scalarL}
 (u,v)_{L^{2}}:=\int_{\mathbb{T}^{d}}u\cdot\bar{v}dx\,, 
 \qquad \forall\, u,v\in L^{2}(\mathbb{T}^{d};\mathbb{C})\,.
 \end{equation}
For $U=(u_1,u_2)\in H^{s}(\mathbb{T}^d;\mathbb{C}^{2})$
we just set
$\|U\|_{s}=\|u_1\|_{s}+\|u_2\|_{s}$.

\vspace{0.2em}
  \noindent
{\bf Notation}. 
We shall 
use the notation $A\lesssim B$ to denote 
$A\le C B$ where $C$ is a positive constant
depending on  parameters fixed once for all, for instance $d$
 and $s$.
 We will emphasize by writing $\lesssim_{q}$
 when the constant $C$ depends on some other parameter $q$.
 To shorten the notation we shall write $H^{s}=H^{s}(\mathbb{T}^{d};\mathbb{C})$.

\subsection{Classes of symbols and operators}\label{sez def simpboli e smoothing}
In this section we introduce symbols and operators we shall use along the paper. We follow the notation of
\cite{Feola-Iandoli-local-tori} but with symbols  introduced in \cite{comm-pdes}. 

Given a symbol $a(x, \xi)$ of order $m$, and fixing $\delta \in (0, 1)$ (very close to one) we define for any $s \in \N$, the norm $|a|_{m, s}$ as 
\begin{equation}\label{seminormSimbo}
|a|_{m, s} := \sup_{|\alpha_1| + |\alpha_2| \leq s} 
\sup_{(x, \xi) \in \T^d \times \R^d} |\partial_x^{\alpha_1} \partial_\xi^{\alpha_2} a(x, \xi) 
\langle \xi \rangle^{- m + \delta |\alpha_2|}|\,,
\end{equation}
and we define ${\cal N}^{m}_s$ the space of the ${C}^s$ functions 
$(x, \xi) \mapsto a(x, \xi)$ such that $|a|_{m, s} < \infty$. 
If the symbol $a(x, \xi)$ is independent of $x$, 
namely it is a Fourier multiplier $a(\xi)$, then the norm is given by 
\[
|a|_{m, s} := \sup_{|\alpha|\leq s} \sup_{ \xi \in  \R^d} | \partial_\xi^{\alpha} a( \xi) \langle \xi \rangle^{- m + \delta |\alpha|}|\,. 
\]
The following elementary lemma holds. 
\begin{lemma}\label{non hom symbols Fourier}
Let $m \in \R$, $N, s \in \N$, $a \in {\cal N}^{m}_{s + N}$. 
Then for any $k \in \Z^d$,  
\[
\begin{aligned}
& |\widehat a( k, \cdot)|_{m, s} \lesssim_{ N} \langle k \rangle^{- N} |a|_{m, s + N}\,.
\end{aligned}
\]
\end{lemma}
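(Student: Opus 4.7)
\begin{pf}[Proof sketch]
The plan is a standard integration-by-parts argument exploiting that $k^N \widehat a(k,\xi)$ corresponds on the physical side to $N$ derivatives in $x$, which are exactly the derivatives controlled by $|a|_{m,s+N}$.

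First I would fix $\alpha \in \N^d$ with $|\alpha| \leq s$ and commute $\partial_\xi^\alpha$ with the Fourier transform in $x$, writing
\[
\partial_\xi^\alpha \widehat a(k,\xi) \;=\; \frac{1}{(2\pi)^{d/2}} \int_{\T^d} \partial_\xi^\alpha a(x,\xi)\, e^{-\ii k\cdot x}\, dx.
\]
Then, for any $j \in \{1,\dots,d\}$, integration by parts $N$ times in the variable $x_j$ yields
\[
k_j^N\, \partial_\xi^\alpha \widehat a(k,\xi) \;=\; \frac{(-\ii)^{N}}{(2\pi)^{d/2}} \int_{\T^d} \partial_{x_j}^{N} \partial_\xi^\alpha a(x,\xi)\, e^{-\ii k\cdot x}\, dx.
\]
Using the definition \eqref{seminormSimbo} of the seminorm, the integrand is bounded in absolute value by $|a|_{m,s+N}\,\langle \xi\rangle^{m - \delta |\alpha|}$, hence
\[
|k_j|^{N}\, \bigl|\partial_\xi^\alpha \widehat a(k,\xi)\bigr| \;\lesssim_{N}\; |a|_{m,s+N}\, \langle \xi\rangle^{m-\delta|\alpha|}.
\]

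The same bound holds trivially for $k=0$ (integrate without integrating by parts), so combining the cases and using $\langle k\rangle^{N} \lesssim_{N} 1 + \max_j |k_j|^{N}$ gives
\[
\langle k\rangle^{N}\, \bigl|\partial_\xi^\alpha \widehat a(k,\xi)\bigr|\, \langle \xi\rangle^{-m+\delta|\alpha|} \;\lesssim_{N}\; |a|_{m,s+N},
\]
uniformly in $\xi \in \R^d$. Taking the supremum over $\xi$ and over $|\alpha| \leq s$ recovers precisely the Fourier-multiplier seminorm $|\widehat a(k,\cdot)|_{m,s}$ on the left hand side, which is the claim.

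I do not expect any real obstacle here: the only mild subtlety is to check that the $\xi$-weight $\langle\xi\rangle^{-m+\delta|\alpha|}$ is not disturbed by the $x$-integration by parts, which is obvious since derivatives in $x$ and in $\xi$ act on independent variables.
\end{pf}
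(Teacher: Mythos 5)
Your proof is correct and follows essentially the same route as the paper's: integrate by parts $N$ times in $x$ to convert powers of $k$ into $x$-derivatives, then control these via the seminorm $|a|_{m,s+N}$. You are a bit more explicit than the paper in tracking the $\partial_\xi^\alpha$-derivatives and the $\langle\xi\rangle$-weights before taking suprema, but this is the same argument.
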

\begin{proof}
By a simple integration by parts, one has 
\[
\begin{aligned}
 k_i^N  \widehat a(k, \xi)  
 & = - \frac{1}{(-\ii)^N}\frac{1}{(2 \pi)^{d/2}} \int_{\T^d}  
 \widehat a( x, \xi)\partial_{x_i}^N (e^{- \ii k \cdot x})\, d x 
 =  \frac{(- 1)^{N + 1}}{(-\ii)^N}\frac{1}{(2 \pi)^{d/2}} \int_{\T^d} \partial_{x_i}^N a( x, \xi)e^{- \ii k \cdot x}\, d x\,.
 \end{aligned}
\]
Hence 
\begin{equation*}
\langle k \rangle^N| \widehat a(k, \cdot) |_{m, s}  
\lesssim_N 
{\rm max}_{i = 1, \ldots, d} |\partial_{x_i}^N a|_{m, s}  \lesssim_N |  a|_{m, s + N}
\end{equation*}
and the claimed statement has been proved. 
\end{proof}

\noindent
{\bf The Bony-Weyl quantization.}
Let $0<\epsilon< 1/2$ and consider
a smooth function $\eta : \mathbb{R}\to[0,1]$
\begin{equation*}
\eta(\x)
=\left\{
\begin{aligned}
&1 \quad {\rm if} |\x|\leq 5/4 
\\
&0 \quad {\rm if} |\x|\geq 8/5 
\end{aligned}\right.\qquad {\rm and \; define}\quad
\qquad \eta_{\epsilon}(\x):=\eta(|\x|/\epsilon)\,.
\end{equation*}
 For a symbol $a(x,\x)$ in $\mathcal{N}_{s}^{m}$
we define its (Weyl)
quantization as 
\begin{equation}\label{quantiWeyl}
\opbw(a)h:=\frac{1}{(2\pi)^{d}}\sum_{j\in \mathbb{Z}^{d}}e^{\ii j\cdot x}
\sum_{k\in\mathbb{Z}^{d}}
\eta_{\epsilon}\Big(\frac{|j-k|}{\langle j+k\rangle}\Big)
\widehat{a}\big(j-k,\frac{j+k}{2}\big)\widehat{h}(k)
\end{equation}
where $\widehat{a}(\eta,\x)$ denotes the Fourier transform 
of $a(x,\x)$ in the variable 
$x\in \mathbb{T}^{d}$.
\begin{remark}\label{rmk:simboloLambda}
Notice that the symbol $\Lambda(\x)$ in \eqref{def Lambda xi}
belongs to $\mathcal{N}_{s}^{\,2}$ for any $s\in \mathbb{R}$, 
with $|\Lambda|_{2,s}\lesssim_{s}1$.
Moreover (recall \eqref{matmetrica}) 
we have that $-\Delta_{g}+m=\opbw(\Lambda(\x))$.
\end{remark}

The following results follows by standard paradifferential calculus.
\begin{lemma}{\bf (Action of Sobolev spaces).}\label{standard cont bony}
Let $m \in \R$, $s_0 > d/2$. Then for any $s \geq 0$, the linear map 
\[
{\cal N}^m_{s_0} \to {\cal L}(H^{s + m}, H^s), \quad a \mapsto \opbw(a)
\]
is continuous, namely 
\[
\| {\rm Op}^{bw}(a) \|_{{\cal L}(H^{s + m}, H^s)} \lesssim_s |a|_{m, s_0}\,. 
\]
\end{lemma}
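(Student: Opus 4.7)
The plan is a standard Schur-type estimate on the Fourier matrix of $\opbw(a)$; the only point where the paradifferential structure matters is the support of the cutoff $\eta_\epsilon$.

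First I would unravel \eqref{quantiWeyl} to read $\opbw(a)$ as an integral operator on Fourier coefficients: the $j$-th Fourier coefficient of $\opbw(a)h$ has the form
$$
\widehat{\opbw(a) h}(j) \,=\, c_d \sum_{k \in \Z^d} K(j, k)\, \widehat h(k), \qquad K(j, k) := \eta_\epsilon\!\left(\tfrac{|j-k|}{\langle j+k\rangle}\right) \widehat a\!\left(j-k,\, \tfrac{j+k}{2}\right).
$$
Hence the bound $\|\opbw(a)\|_{\mathcal{L}(H^{s+m}, H^s)} \lesssim_s |a|_{m, s_0}$ is equivalent to an $\ell^2(\Z^d)\to \ell^2(\Z^d)$ bound of norm $\lesssim_s |a|_{m,s_0}$ for the weighted kernel $\widetilde K(j,k) := \langle j\rangle^s\, K(j,k)\, \langle k\rangle^{-(s+m)}$.

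Second I would exploit the frequency-localization built into the paradifferential cutoff. Since $\epsilon < 1/2$, the support of $\eta_\epsilon(|j-k|/\langle j+k\rangle)$ lies in $\{|j-k| \leq \tfrac{4}{5}\langle j+k\rangle\}$; writing $j = \tfrac{j+k}{2} + \tfrac{j-k}{2}$ and $k = \tfrac{j+k}{2} - \tfrac{j-k}{2}$ and using the triangle inequality yields on this support the equivalences
$$
\langle j\rangle \,\sim\, \langle k\rangle \,\sim\, \langle \tfrac{j+k}{2}\rangle,
$$
and in particular $\langle j\rangle^s \langle k\rangle^{-(s+m)} \lesssim_s \langle (j+k)/2\rangle^{-m}$. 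Combining this with the symbol estimate $|\widehat a(j-k, \xi)| \leq |\widehat a(j-k, \cdot)|_{m,0}\,\langle \xi\rangle^m$ and Lemma \ref{non hom symbols Fourier} applied with any integer $N$, I get
$$
|\widetilde K(j, k)| \,\lesssim_{s, N}\, \langle j - k\rangle^{-N}\, |a|_{m, N}.
$$

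Finally I apply Schur's test to this kernel. With $N$ chosen large enough that $\sum_{\ell \in \Z^d} \langle \ell\rangle^{-N} < \infty$, both $\sup_j \sum_k |\widetilde K(j, k)|$ and $\sup_k \sum_j |\widetilde K(j, k)|$ are bounded by a constant times $|a|_{m, N}$, which gives $\|\opbw(a) h\|_{H^s} \lesssim_s |a|_{m, s_0} \|h\|_{H^{s+m}}$; the sharper threshold $s_0 > d/2$ stated in the lemma is recovered by replacing the crude sum $\sum_\ell \langle\ell\rangle^{-N}$ with a Cauchy–Schwarz/Parseval step on the Fourier series of $a(\cdot, \xi)\in H^{s_0}_x(\T^d)$. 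There is no real obstacle here: once the cutoff's support property is recorded, the argument is purely combinatorial, and the mild work is just bookkeeping the threshold on $s_0$.
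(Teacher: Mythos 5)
Your kernel reduction, the support analysis of $\eta_\epsilon$, and the Schur-test step are all correct, and together they prove a valid boundedness statement --- but with the threshold $s_0 > d$ (indeed $s_0 \geq d+1$ integer), not $s_0 > d/2$ as the lemma states. The claimed sharpening to $s_0 > d/2$ by a ``Cauchy--Schwarz/Parseval step'' is where the proof has a genuine gap, and I do not believe it closes along the route you sketch. Your argument spends the entire regularity budget of $|a|_{m,s_0}$ on $x$-decay of $\widehat a$: setting $c_\ell := \sup_\xi |\widehat a(\ell,\xi)| \langle\xi\rangle^{-m}$, Lemma \ref{non hom symbols Fourier} gives $c_\ell \lesssim \langle\ell\rangle^{-s_0}|a|_{m,s_0}$, and the Schur/Young bound reduces to $\sum_\ell c_\ell$. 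Trying to sharpen by writing $\sum_\ell c_\ell \leq \bigl(\sum_\ell \langle\ell\rangle^{-2s_0}\bigr)^{1/2}\bigl(\sum_\ell \langle\ell\rangle^{2s_0} c_\ell^2\bigr)^{1/2}$ makes the first factor converge exactly for $s_0 > d/2$, which is encouraging --- but the second factor diverges, since the only uniform control on $c_\ell$ is $c_\ell \lesssim \langle\ell\rangle^{-s_0}|a|_{m,s_0}$, so every summand is $O(|a|_{m,s_0}^2)$. Parseval would save the day if the $\sup_\xi$ in $c_\ell$ could be pulled outside the sum (for fixed $\xi$ one has $\sum_\ell \langle\ell\rangle^{2 s_0}|\widehat a(\ell,\xi)|^2\langle\xi\rangle^{-2m} \lesssim |a|_{m,s_0}^2$), but the inequality $\sum_\ell\sup_\xi \geq \sup_\xi\sum_\ell$ goes the wrong way, and you cannot reduce to a fixed $\xi$: the Weyl midpoint $(j+k)/2$ in \eqref{quantiWeyl} shifts with $\ell = j-k$ by an amount up to $|\ell|/2 \sim \langle j\rangle$ on the support of the cutoff, which is too large to Taylor-expand away given that each $\xi$-derivative in $|\cdot|_{m,s}$ only gains $\langle\xi\rangle^\delta$ with $\delta < 1$.

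The threshold $s_0 > d/2$ in the lemma is reached by a genuinely different mechanism, not by refining the combinatorics. The standard route (M\'etivier \cite{Metivier}, Berti--Delort \cite{BD}) performs a Littlewood--Paley decomposition of $h$, uses the cutoff $\eta_\epsilon$ to observe that $\opbw(a)$ is almost block-diagonal and that only $x$-frequencies $|\ell| \lesssim \epsilon 2^n$ of $a$ contribute on the $n$-th block, and then proves the $L^2$ bound on each block by a Calder\'on--Vaillancourt/Cordes-type argument that uses roughly $\lfloor d/2\rfloor+1$ \emph{$\xi$-derivatives} of the symbol and requires no $x$-decay of $\widehat a$ at all --- $L^\infty_x$ is enough. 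In other words, the lemma at $s_0>d/2$ exploits the $\xi$-differentiability encoded in $|a|_{m,s_0}$, which your argument never touches. This is not ``just bookkeeping'': the two arguments use disjoint parts of the seminorm, and your route is stuck at $s_0 > d$.
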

In the paper we shall deal with symbols in $\mathcal{N}_{s}^{m}$
depending nonlinearly on a function $u\in H^{s}(\mathbb{T}^{d};\mathbb{C})$.
Let us now introduce the spaces
\begin{equation}\label{hcic}
{\bf H}^{s}:=\Big( H^{s}(\mathbb{T}^{d};\C)\times H^{s}(\mathbb{T}^{d};\C) \Big)\cap\mathcal{U}\,,
\qquad
\mathcal{U}:=\{(u^{+},u^{-})\in \mathbb{L}^{2}(\mathbb{T}^{d};\mathbb{C}^{2})\; :\; \ov{u^{+}}=u^{-}\}\,.
\end{equation}
We denote by $B_s( r )$ the ball 
\[
B_s( r ) := \Big\{ U = (u, \bar u)\in {\bf H}^{s}
\;:\; \| U \|_s \leq r \Big\}\,. 
\]
\begin{definition}{\bf (Non-Homogeneous Symbols).}\label{simboli non lineari}
Let $m \in \R$, $p \in \N$. 
We say that a map $U = (u, \bar u )\mapsto a(U; x, \xi)$ 
belongs to the class $\Gamma^{m}_p$ if there exists $s_0 > 0$ 
such that for any $s \geq s_0$, 
there exists $r = r(s) \in (0, 1)$, $\sigma_s \gg s$ such that the map 
\[
B_{\sigma_s}(r) \to {\cal N}^m_s\,, \quad U \mapsto a(U; x, \xi)\,,
\]
is ${\cal C}^\infty$-smooth and vanishes at $U = 0$ of order $p$. 
\end{definition}
\begin{remark}{\bf (Estimates on non-homogenenous symbols).}\label{stime simboli in omogeneita}
Clearly by the latter definition, one has the following estimates.
\[
|a(U; \cdot)|_{m, s} 
\lesssim_s \| U \|_{\sigma_s}^p
\]
If $n \leq p$, $H_1, \ldots, H_n\in H^{\sigma_s}$, 
\[
|d^n a(U; \cdot)[H_1, \ldots, H_n]|_{m, s} 
\lesssim \| U \|_{\sigma_s}^{p - k} \| H_1 \|_{\sigma_s} \ldots \| H_n \|_{\sigma_s}\,. 
\]
If $n > p$, then 
\[
|d^n a(U; \cdot)[H_1, \ldots, H_n]|_{m, s} 
\lesssim  \| H_1 \|_{\sigma_s} \ldots \| H_n \|_{\sigma_s}\,. 
\]
\end{remark}

\begin{definition}{\bf (Linear symbols in $(u, \bar u)$).}\label{simboli lineari}
Let $m \in \R$. We say that a linear map $U = (u, \bar u )\mapsto a(U; x, \xi)$ 
belongs to the class $O^{m}_1$ if it is in the class 
$\Gamma^m_1$ and the symbol $a(U; x, \xi)$ is linear w.r.t. $U$, 
namely it has the form 
\[
a(U; x, \xi) = \sum_{k \in \Z^d,\s\in \{\pm\}} m_{\s}(k, \xi) \widehat{u}^{\s}(k) e^{\s\ii k \cdot x} 
\]
where, for any $k\in \mathbb{Z}^{d}$, we denoted
\[
\widehat{u}^{\s}(k)=\widehat{u}(k)\,,\;\;{\rm if}\;\;\s=+\,,\qquad   
\widehat{u}^{\s}(k)=\ov{\widehat{u}(k)}\,,\;\;{\rm if}\;\;\s=-\,.
\]
\end{definition}

\begin{remark}
Notice that one has the inclusion $O^m_1 \subseteq \Gamma^m_1$. 
\end{remark}

\begin{definition}{\bf (Symbols).}\label{simboli}
Given $m \in \R$, we say that a symbol $ a \in \Sigma^m_1$ if $a = a_l + a_q$ with $a_l \in O^m_1$ and $a_q \in \Gamma^m_2$. 
\end{definition}

\begin{definition}{\bf (Classes of para-differential operators).}\label{Op paradiff}
$(i)$ We say that a linear operator $A$ is in the class 
${\cal O}{\cal B}_\Gamma(m, p)$ 
if there exists $a \in \Gamma^m_p$ such that $A = {\rm Op}^{bw}(a)$. 

\noindent
$(ii)$ We say that a linear operator $A$ is in the class 
${\cal O}{\cal B}_O(m)$, if there exists $a \in O^m_1$ such that $A = {\rm Op}^{bw}(a)$. 

\noindent
$(iii)$ We say that a linear operator $A$ is in the class 
${\cal O}{\cal B}_\Sigma(m)$, if there exists $a \in \Sigma^m_1$ such that $A = {\rm Op}^{bw}(a)$.
\end{definition}

We now start by defining the classes of smoothing operators that we use in our procedure. 
\begin{definition}{\bf (Non-Homogenenous smoothing operators).}\label{nonomosmooth}
Let $N \in \N$. We say that a map $(U, w) \mapsto {\cal R}(U)[w] $ 
belongs to the class ${ \cal S}_2( N)$ if there exists $\rho \equiv \rho_N > N$ 
such that for any $s \geq \rho$ the map
\[
B_\rho(r) \to {\cal B}(H^s, H^{s+ N}), \quad U \mapsto {\cal R}(U)
\]
is continuous and satisfies the \emph{tame} estimate 
\begin{equation}\label{marlene1}
\| {\cal R}(U) \|_{{\cal L}(H^s, H^{s+ N})} \lesssim_{s, N, \rho} \| U \|_\rho^2\,, \quad \forall s \geq \rho\,.  
\end{equation}
\end{definition}
\begin{definition}{\bf (Smoothing operators depending linearly on $(u, \bar u)$).}\label{smoothing lineare u}
Let $N \in \N$. We say that a {\bf bilinear} map 
$(u, w) \mapsto {\cal R}(u)[w]$ belongs to the class ${\cal O}{\cal S}_1( N)$ if it is of the form 
\begin{equation}\label{smoothing bilineare}
{\cal R}(u)[w] = \sum_{\xi, k \in \Z^d} r(k, \xi) \widehat u(k - \xi) \widehat w(\xi) e^{\ii x \cdot k}\,,
\end{equation}
and there exists $\rho \equiv \rho_N > N$ 
such that the {\bf linear} map $H^\rho \to {\cal B}(H^s, H^{s + N})$, 
$u \mapsto {\cal R}(u)$ satisfies the tame estimate 
\begin{equation}\label{marlene2}
\| {\cal R}(u) \|_{{\cal L}(H^s, H^{s + N})} \lesssim_{s, \rho, N} \| u \|_\rho\,, \quad \forall s \geq \rho\,. 
\end{equation}
With a slight abuse of terminology we use the same notation 
for the class of operators of the form 
$(U, w) \mapsto {\cal R}(U)[w] = {\cal R}_+(u) [w]+ {\cal R}_{- }(\overline u)[w]$ 
where ${\cal R}_+, {\cal R}_{-} \in {\cal O}{\cal S}_1( N)$. 
\end{definition}

\begin{definition}{\bf (Smoothing operators).}\label{smoothopera}
We say that ${\cal R}$ is in 
${\cal S}( N)$ if ${\cal R} = {\cal R}_1 + {\cal R}_2$ with ${\cal R} \in {\cal O}{\cal S}_1( N)$ 
and ${\cal R}_2 \in {\cal S}_2(N)$. 
\end{definition}

\begin{definition}[{\bf Matrix valued symbols and operators}]\label{def op simb matrici}
(i)
Consider a matrix valued symbol 
\[
A:=A(U; x, \xi) := \begin{pmatrix}
a(U; x, \xi) & b(U; x, \xi) \\
\overline{b(U; x, - \xi)} & \overline{a(U; x, - \xi)}
\end{pmatrix}
\]
We say that $A \in \Gamma^m_p$, 
resp. $O^m_1$, resp $\Sigma_m^1$ 
if  its entries 
$a, b \in \Gamma^m_p$, resp. $O^m_1$, 
resp. $\Sigma_m^1$. 
We then denote by ${\rm Op}^{bw}(A)$ 
the matrix valued operator 
\begin{equation}\label{barrato4bis}
{\rm Op}^{bw}(A) = 
\begin{pmatrix}
{\rm Op}^{bw}\Big( a(U; x, \xi)  \Big)& {\rm Op}^{bw}\Big( b(U; x, \xi) \Big) \vspace{0.2em}\\
{\rm Op}^{bw}\Big( \overline{b(U; x, - \xi)} \Big)& 
{\rm Op}^{bw}\Big(  \overline{a(U; x, - \xi)} \Big)
\end{pmatrix}
\end{equation}
and we use the same notations to denote the classes given in the definition \ref{Op paradiff}. 

(ii) Similarly if ${\cal R}_1, {\cal R}_2 \in {\cal O}$ where ${\cal O} = {\cal S}_2( N), {\cal O}{\cal S}_1(N), {\cal S}( N)$ we say that 
\begin{equation}\label{barrato4}
{\cal R}(U) = \begin{pmatrix}
{\cal R}_1(U) & {\cal R}_2(U) \\
\overline{{\cal R}_2(U)} & \overline{{\cal R}_1(U)}
\end{pmatrix}
\end{equation}
belongs to the class ${\cal O}$. Here the operators $\ov{R_{j}(U)}$, $j=1,2$, are defined as
\begin{equation}\label{barrato2}
\ov{R_{j}(U)}[h]:=\ov{R(U)[\bar{h}]}\,,\qquad \forall\, h\in H^{s}(\mathbb{T}^{d};\mathbb{C})\,.
\end{equation}
\end{definition}
One can easily check that 
a  linear operator $\mathcal{R} $ of the form \eqref{barrato4} (or \eqref{barrato4bis})
is \emph{real-to-real} in the sense that it 
preserves the spaces ${\bf H}^{s}$ (see \eqref{hcic}).
On the space ${\bf{ H}}^0$ we define the scalar product
\begin{equation}\label{comsca}
(U,V)_{{\bf{ H}}^0}:=
\int_{\mathbb{T}}U\cdot\ov{V}dx.
\end{equation}
Given an operator $\mathcal{R}$  of the form \eqref{barrato4} 
we denote by $\mathcal{R^*}$ its adjoint with respect to the scalar product $\eqref{comsca}$, i.e.
\begin{equation*}
(\mathcal{R}U,V)_{{\bf{ H}}^0}=(U,\mathcal{R}^{*}V)_{{\bf{ H}}^0}\,, 
\quad \forall\,\, U,\, V\in {\bf{ H}}^0.
\end{equation*}
One can check that 
\begin{equation*}
\mathcal{R}^*:=\left(\begin{matrix} \mathcal{R}_1^* & \ov{\mathcal{R}_2}^* 
\\ {\mathcal{R}_2}^* & \ov{\mathcal{R}_1}^*\end{matrix}\right)\,,
\end{equation*}
where $\mathcal{R}_1^*$ and $\mathcal{R}_2^*$ are respectively the adjoints 
of the operators $\mathcal{R}_1$ and $\mathcal{R}_2$ with respect to
the complex scalar product on $L^{2}(\mathbb{T};\mathbb{C})$ defined in \eqref{scalarL}.

\begin{definition}
\label{selfi}
Let $\mathcal{R}$ be an operator as in \eqref{barrato4}.
We say that  $\mathcal{R}$ 
is \emph{self-adjoint} if 
\begin{equation}\label{calu}
\mathcal{R}_1^{*}=\mathcal{R}_1,\;\;
\;\; \ov{\mathcal{R}_2}=\mathcal{R}_2^{*}\,.
\end{equation}
We say that an operator $\mathcal{M}$ as in \eqref{barrato4}
is \emph{Hamiltonian} is $-\ii E\mathcal{M}$ is self-adjoint.
\end{definition}

Consider now a symbol $a=a(x,\x) \in \Gamma^m_p$ ( resp. $O^m_1$, 
resp. $\Sigma_m^1$), and set $A:=\opbw(a(x,\x))$.
Using \eqref{quantiWeyl} and \eqref{barrato2} one can check that 
\begin{align*}
&\bar{A}=\opbw(\widetilde{a}(x,\x))\,,\qquad \widetilde{a}(x,\x)=\ov{a(x,-\x)}\,;
\\
{\bf (Ajdoint)}\qquad 
&A^{*}=\opbw\big(\,\ov{a(x,\x)}\,\big)\,.
\end{align*}
Therefore, a matrix valued paradifferential operator 
as in \eqref{barrato4bis} is self-adjoint according to Definition
\ref{selfi}
if and only if (recall \eqref{calu}) one has
\begin{equation}\label{simboAggiunto2}
a(x,\x)=\ov{a(x,\x)}\,,
\qquad
b(x,-\x)=b(x,\x)\,.
\end{equation}

\begin{definition}[{\bf Symplectic map}]\label{mappasimpl}
Let $\mathcal{Q}=\mathcal{Q}(U)$ be a matrix valued operator of the form \eqref{barrato4} (resp. \eqref{barrato4bis}).
We say that  $\mathcal{Q}$ 
is \emph{symplectic} if 
\begin{equation}\label{condsimpl}
\mathcal{Q}^{*}(-\ii E )\mathcal{Q}=- \ii E\,, \qquad E=\sm{1}{0}{0}{-1}\,.
\end{equation}
\end{definition}

\subsection{Symbolic calculus}\label{sez calcolo simbolico}
In this section we provide some abstract lemmas on the classes 
that we defined before that we shall apply in our normal form procedure. 
We introduce the following differential operator
\[ 
\s(D_{x},D_{\x},D_{y},D_{\eta}) := D_{\x}D_{y}-D_{x}D_{\eta}\,, 
\]
where $D_{x}:=\frac{1}{\ii}\pa_{x}$ and $D_{\x},D_{y},D_{\eta}$ are similarly defined.

\begin{definition}{\bf (Asymptotic expansion of composition symbol).}
Let $\rho\in \mathbb{N}$, $m_1,m_2\in \R$ and $a\in \Sigma_1^{m_1}$, $b\in \Sigma_1^{m_2}$.
We define the symbol
\begin{equation}\label{espansione2}
(a\#_{\rho} b)(U;x,\x):=\sum_{k=0}^{\rho-1}\frac{1}{k!}
\left(
\frac{\ii}{2}\s(D_{x},D_{\x},D_{y},D_{\eta})\right)^{k}
\Big[a(x,\x)b(y,\eta)\Big]_{|_{\substack{x=y, \x=\eta}}}
\end{equation}
modulo symbols in $\Sigma_{1}^{m_1+m_2-\rho \delta}$.
\end{definition}
\begin{remark}\label{espansEsplic}
Recalling \eqref{seminormSimbo} we note that the symbol
$
\s(D_{x},D_{\x},D_{y},D_{\eta})^{k}\big[a(x,\x)b(y,\eta)\big]_{|_{\substack{x=y, \x=\eta}}}
$
belongs to $\Sigma_{1}^{m_1+m_2-\delta k}$. In particular we have the expansion
$a\#_{\rho} b=ab+\tfrac{1}{2\ii}\{a,b\}+\Sigma_{1}^{m_1+m_2-2\delta}$.
\end{remark}
We shall prove the following result on the composition of paradifferential operator.

\begin{proposition}\label{prop:composit}
Fix $\rho\in \mathbb{N}$, $m_1,m_2\in \mathbb{R}$ and $s_0>d/2$. 
There is $q=q(\rho)\gg1$ such that, for $a\in \mathcal{N}^{m_1}_{s_0+q} $, 
$b\in \mathcal{N}^{m_2}_{s_0+q} $,
one has that
\begin{equation}\label{espansionecompo}
\opbw(a)\circ\opbw(b)=\opbw(a\#_{\rho}b)+R(a, b)
\end{equation}
where, for any $s\geq s_0>d/2$, the bilinear and continuous map
$$
{\mathcal N}^{m_1}_{s_0+q(\rho)} \times {\mathcal N}^{m_2}_{s_0+q(\rho)} \to {\cal L}(H^s, H^{s-m_1-m_2+\rho}), \quad (a, b) \mapsto R(a, b)
$$
satisfies
\begin{equation}\label{composit2}
\|R(a, b)h\|_{s-m_1-m_2+\rho}\lesssim_{s}|a|_{m_1,s_0+q(\rho)}
|b|_{m_2,s_0+q(\rho)} \|h\|_{s}\,, \quad \forall h \in H^s\,.  
\end{equation}
\end{proposition}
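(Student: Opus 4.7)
The plan is a standard symbolic-calculus argument for paradifferential operators, adapted to the Bony--Weyl quantization \eqref{quantiWeyl} with its cutoff $\eta_\epsilon$. I split the proof into three steps. \emph{Step 1 (Composition kernel).} Applying \eqref{quantiWeyl} twice, for $h \in H^s$ I obtain
\begin{align*}
\opbw(a)\opbw(b)h(x) = \frac{1}{(2\pi)^{2d}}\sum_{j,k,l\in\Z^d} e^{\ii j\cdot x}\,\eta_\epsilon\!\Bigl(\tfrac{|j-l|}{\langle j+l\rangle}\Bigr)\widehat{a}\!\Bigl(j-l,\tfrac{j+l}{2}\Bigr)\eta_\epsilon\!\Bigl(\tfrac{|l-k|}{\langle l+k\rangle}\Bigr)\widehat{b}\!\Bigl(l-k,\tfrac{l+k}{2}\Bigr)\widehat{h}(k).
\end{align*}
Changing variables to $\zeta=j-l$, $\mu=l-k$ (so that $j-k=\zeta+\mu$), I recognise the right-hand side as the Bony--Weyl quantization of the candidate symbol
\begin{align*}
c(x,\xi) := \sum_{\zeta,\mu\in\Z^d} \widehat{a}\!\Bigl(\zeta,\xi+\tfrac{\mu}{2}\Bigr)\widehat{b}\!\Bigl(\mu,\xi-\tfrac{\zeta}{2}\Bigr)e^{\ii(\zeta+\mu)\cdot x},
\end{align*}
modulo a correction coming from the mismatch between the single cutoff $\eta_\epsilon(|\zeta+\mu|/\langle j+k\rangle)$ built into $\opbw(c)$ and the product of the two cutoffs appearing in the composition. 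On the support of this mismatch at least one of $|\zeta|,|\mu|$ is comparable to $\langle j+k\rangle$, so Lemma \ref{non hom symbols Fourier} with $N$ arbitrarily large shows that the correction is an infinitely smoothing operator with a bilinear tame bound in $(a,b)$.

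\emph{Step 2 (Taylor expansion of $c$).} I Taylor expand $\widehat{a}(\zeta,\xi+\mu/2)$ and $\widehat{b}(\mu,\xi-\zeta/2)$ jointly in the $\xi$-variable around $\xi$, up to order $\rho-1$. A direct reorganisation, using the symmetric roles of the $\zeta$- and $\mu$-derivatives in the Moyal bracket, shows that the sum of the leading contributions is exactly the truncation \eqref{espansione2}, yielding $\opbw(a\#_\rho b)$. The Taylor remainder $r_\rho(x,\xi)$ is then a finite sum of integrals
\begin{align*}
\int_0^1 (1-t)^{\rho-1} \sum_{\zeta,\mu\in\Z^d} \zeta^{\alpha}\mu^{\beta}\,(\partial_\xi^{\alpha+\beta}\widehat{a})(\zeta,\xi+t\mu/2)\,\widehat{b}(\mu,\xi-\zeta/2)\,e^{\ii(\zeta+\mu)\cdot x}\,dt,
\end{align*}
with $|\alpha|+|\beta|=\rho$, together with symmetric terms in which the $\xi$-derivatives fall on $\widehat{b}$.

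\emph{Step 3 (Estimate of the remainder).} By Lemma \ref{non hom symbols Fourier}, at the cost of $q(\rho)$ extra $x$-derivatives on $a$ and $b$, the Fourier coefficients $\widehat{a}(\zeta,\cdot)$ and $\widehat{b}(\mu,\cdot)$ acquire arbitrary polynomial decay in $\zeta$ and $\mu$, which dominates the monomial factors $\zeta^{\alpha}\mu^{\beta}$ and makes the double sums absolutely convergent. Each of the $\rho$ $\xi$-derivatives gains a factor $\langle\xi\rangle^{-\delta}$ through the seminorm \eqref{seminormSimbo}, so $r_\rho$ belongs to $\mathcal{N}^{m_1+m_2-\rho\delta}_s$ with bilinear tame bound $|r_\rho|_{m_1+m_2-\rho\delta,s}\lesssim_s|a|_{m_1,s_0+q(\rho)}|b|_{m_2,s_0+q(\rho)}$; taking the expansion long enough (equivalently, $q(\rho)$ large enough) to absorb the $\delta$-discrepancy yields a symbol of order $m_1+m_2-\rho$, and Lemma \ref{standard cont bony} then gives the continuity from $H^s$ to $H^{s-m_1-m_2+\rho}$ claimed in \eqref{composit2}. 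Bilinearity and continuity of $R(a,b)$ in $(a,b)$ are manifest from the integral formula above.

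The hard part is Step 1: reconciling the product of two paradifferential cutoffs with the single cutoff built into $\opbw(a\#_\rho b)$. Away from the transition region the argument reduces to standard full Weyl composition plus Taylor expansion, but the bookkeeping of the three discrete frequencies $(j,l,k)$ subject to two cutoff conditions — and the frequency-separation bounds, via Lemma \ref{non hom symbols Fourier}, that render the mismatch smoothing of arbitrary order — are the technically heaviest ingredient and the only place where the precise shape of $\eta_\epsilon$ enters in a nontrivial way.
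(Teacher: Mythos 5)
Your proposal takes essentially the same route as the paper: compose the two Bony--Weyl kernels, Taylor expand in the second argument, identify the leading terms with $\opbw(a\#_\rho b)$, and estimate the remainder (Taylor remainder plus cutoff mismatch) using the rapid decay of $\widehat a(k,\cdot)$, $\widehat b(k,\cdot)$ in $k$ from Lemma~\ref{non hom symbols Fourier}. The only packaging difference is that you introduce the intermediate ``full Moyal symbol'' $c$ and split off the cutoff mismatch \emph{before} Taylor expanding, whereas the paper Taylor expands first (producing $g_1,\dots,g_4$ under the product cutoff $r_1$) and compares cutoffs only on the leading piece $g_1$; this is a harmless reorganisation.

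Two points in your Step~3 need tightening, though. First, you Taylor expand only to order $\rho-1$, which makes the remainder a symbol of order $m_1+m_2-\rho\delta$ (each $\partial_\xi$ only gains $\langle\xi\rangle^{-\delta}$ in the class \eqref{seminormSimbo}); to reach order $m_1+m_2-\rho$ one must expand to some $L$ with $L\delta\gtrsim\rho$, as the paper does by fixing $L\gg\rho$, and the Taylor terms of order $\rho,\dots,L$ --- which are \emph{not} in $a\#_\rho b$ --- must then be explicitly folded into $R(a,b)$. Second, the remark ``taking the expansion long enough (equivalently, $q(\rho)$ large enough)'' conflates two independent parameters: the Taylor length $L$ governs the $\xi$-order of the remainder, while $q(\rho)$ (the extra $x$-regularity of $a,b$) only furnishes, via Lemma~\ref{non hom symbols Fourier}, the $\langle k\rangle^{-N}$ decay needed to sum the Fourier series and to absorb the polynomial prefactors $\zeta^\alpha\mu^\beta$. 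They are linked only in that longer Taylor expansions put more $\xi$-derivatives on $\widehat a(\zeta,\cdot)$, and hence require more $x$-derivatives for the $\zeta$-decay, but neither replaces the other. Finally, your one-line assertion in Step~1 that on the support of the cutoff mismatch at least one of $|\zeta|,|\mu|$ is comparable to $\langle j+k\rangle$ is precisely the content of the paper's decomposition into the sets $D$, $A$, $B$ and the sums $I$, $II$, $III$ (with $III$ empty); if this is to stand as a proof, that case analysis should be written out.
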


\begin{proof}
In order to prove the lemma above we reason as follows.
First of all notice that\footnote{We denote the Fourier transform in $x\in \mathbb{T}^{d}$ 
of a function $f(x)$ by $\mathcal{F}(f)(\x)=\widehat{f}(\x)$.}
\begin{equation}\label{def:prodotto1}
\mathcal{F}{(\opbw(a)\circ\opbw(b)h)}(\x)=
\sum_{\eta,\theta\in\mathbb{Z}^{d}}
r_1(\x,\theta,\zeta) \widehat{a}\big(\x-\theta,\frac{\x+\theta}{2}\big)
\widehat{b}\big(\theta-\zeta,\frac{\theta+\zeta}{2}\big)\widehat{h}(\zeta)\,,
\end{equation}
where
\begin{equation}\label{cutoffR1}
r_1(\x,\theta,\zeta):=\eta_{\epsilon}\left(\frac{|\x-\theta|}{|\x+\theta|}\right)
\eta_{\epsilon}\left(\frac{|\theta-\zeta|}{|\theta+\zeta|}\right)\,.
\end{equation}
Fix $L\in \mathbb{N}$ with $L\gg \rho$ to be chosen later.
By Taylor expanding the symbols we have
\begin{equation}\label{expA}
\begin{aligned}
\widehat{a}\Big(\x-\theta,\frac{\x+\theta}{2}\Big)&=
\sum_{k=0}^{L}\frac{1}{2^{k} i^{k}k!}
\widehat{(\pa_{\x}^{k}a)}\Big(\x-\theta,\frac{\x+\zeta}{2}\Big)[\ii(\theta-\zeta)]^{k}+
\\&
+\frac{1}{2^{L+1}\ii^{L+1}L!}\int_{0}^{1}(1-\tau)^{L}
\widehat{(\pa_{\x}^{L+1}a)}\Big(\x-\theta,\frac{\x+\zeta}{2}+\tau\frac{\theta-\zeta}{2}\Big)
[\ii(\theta-\zeta)]^{L+1}d\tau\,,
\end{aligned}
\end{equation}
\begin{equation}\label{expB}
\begin{aligned}
\widehat{b}\Big(\theta-\zeta,\frac{\theta+\zeta}{2}\Big)&=
\sum_{j=0}^{L}\frac{(-1)^{j}}{2^{j} i^{j} j!}
\widehat{(\pa_{\x}^{j}b)}\Big(\theta-\zeta,\frac{\x+\zeta}{2}\Big)[\ii(\x-\theta)]^{j}+
\\&
+\frac{(-1)^{L+1}}{2^{L+1}\ii^{L+1}L!}\int_{0}^{1}(1-\tau)^{L}
\widehat{(\pa_{\x}^{L+1}b)}\Big(\theta-\zeta,\frac{\x+\zeta}{2}+\tau\frac{\theta-\x}{2}\Big)
[\ii(\x-\theta)]^{L+1}d\tau\,.
\end{aligned}
\end{equation}
Therefore we deduce that
\begin{equation}\label{expAB}
\widehat{a}\Big(\x-\theta,\frac{\x+\theta}{2}\Big)
\widehat{b}\Big(\theta-\zeta,\frac{\theta+\zeta}{2}\Big)=
\sum_{\ell=1}^{4}g_{\ell}(\x,\theta,\zeta)
\end{equation}
where
\begin{equation}\label{def:g1}
g_{1}(\x,\theta,\zeta):=
\sum_{p=0}^{L}\frac{1}{2^{p}\ii^{p}p!}
\sum_{k=0}^{p}\left(\begin{matrix}p \\ k\end{matrix}\right)(-1)^{p-k}
\widehat{(\pa_{\x}^{k}\pa_{x}^{p-k}a)}\Big(\x-\theta,\frac{\x+\zeta}{2}\Big)
\widehat{(\pa_{\x}^{p-k}\pa_{x}^{k}b)}\Big(\theta-\zeta,\frac{\x+\zeta}{2}\Big)\,,
\end{equation}
\begin{equation}\label{def:g2}
g_{2}(\x,\theta,\zeta):=
\sum_{p=L+1}^{2L}\frac{1}{2^{p}\ii^{p}p!}
\sum_{k=0}^{p}\left(\begin{matrix}p \\ k\end{matrix}\right)(-1)^{p-k}
\widehat{(\pa_{\x}^{k}\pa_{x}^{p-k}a)}\Big(\x-\theta,\frac{\x+\zeta}{2}\Big)
\widehat{(\pa_{\x}^{p-k}\pa_{x}^{k}b)}\Big(\theta-\zeta,\frac{\x+\zeta}{2}\Big)\,,
\end{equation}
\begin{equation}\label{def:g3}
g_{3}(\x,\theta,\zeta):=\frac{\widehat{b}\Big(\theta-\zeta,\frac{\theta+\zeta}{2}\Big)}{2^{\rho+1}\ii^{\rho+1}\rho!}
\int_{0}^{1}(1-\tau)^{\rho}
\widehat{(\pa_{\x}^{\rho+1}a)}\Big(\x-\theta,\frac{\x+\zeta}{2}+\tau\frac{\theta-\zeta}{2}\Big)
[\ii(\theta-\zeta)]^{\rho+1}d\tau\,,
\end{equation}
\begin{equation}\label{def:g4}
\begin{aligned}
g_{4}(\x,\theta,\zeta)&:=\sum_{k=0}^{\rho}
\frac{(-1)^{\rho+1}}{2^{\rho+1}\ii^{\rho+1}\rho!}\int_{0}^{1}(1-\tau)^{\rho}
\widehat{(\pa_{\x}^{\rho+1}b)}\Big(\theta-\zeta,\frac{\x+\zeta}{2}+\tau\frac{\theta-\x}{2}\Big)
[\ii(\x-\theta)]^{\rho+1}d\tau\times\\
&\times\frac{1}{2^{k} i^{k}k!}
\widehat{(\pa_{\x}^{k}a)}\Big(\x-\theta,\frac{\x+\zeta}{2}\Big)[\ii(\theta-\zeta)]^{k}\,.
\end{aligned}
\end{equation}
We set
\begin{equation}\label{Relle}
\opbw(a)\circ\opbw(b)=\sum_{\ell=1}^{4}R_{\ell}
\end{equation}
where the operators $R_{\ell}$ are defined by 
\begin{equation}\label{def:Relle}
\widehat{R_{\ell}h}(\x)=\sum_{\zeta,\theta\in \mathbb{Z}^{d}}r_{1}(\x,\theta,\zeta)
g_{\ell}(\x,\theta,\zeta)\widehat{h}(\zeta)\,,\qquad \ell=1,\ldots,4\,,
\end{equation}
where $r_1$ is in \eqref{cutoffR1} and $g_{\ell}$ are in \eqref{def:g1}-\eqref{def:g4}.

We now study the explicit form
of the symbol 
$(a\#_{\rho}b)(x,\x)$ (recall \eqref{espansione2}). First of all we note that
(formally)
\[
\frac{1}{p!}\big[\frac{\ii}{2}\s(D_{x},D_{\x},D_{y},D_{\eta})\big]^{p}=
\frac{1}{2^{p}\ii^{p}p!}(\pa_{\x}\pa_{y}-\pa_{x}\pa_{\eta})^{p}=
\frac{1}{2^{p}\ii^{p}p!}\sum_{k=0}^{p}\left(\begin{matrix}p \\ k\end{matrix}\right)(-1)^{p-k}
(\pa_{\x}\pa_{y})^{k}(\pa_{x}\pa_{\eta})^{p-k}\,.
\]
Then it is easy to note that (using \eqref{espansione2} and \eqref{def:g1})
\[
\mathcal{F}(a\#_{\rho}b)(\x-\zeta,\frac{\x+\zeta}{2})=\sum_{\theta\in \mathbb{Z}^{d}}g_{1}(\x,\theta,\zeta)\,.
\]
Hence we have that $ \opbw(a\#_{\rho}b)h=:Q h$ has the form
\begin{equation}\label{RRelle}
\begin{aligned}
 \widehat{Qh}(\x)&:=
\sum_{\zeta\in \mathbb{Z}^{d}} r_{2}(\x,\zeta)
\mathcal{F}(a\#_{\rho}b)(\x-\zeta,\frac{\x+\zeta}{2})\widehat{h}(\zeta)
=\sum_{\zeta,\theta\in \mathbb{Z}^{d}}\chi_{\epsilon}\big(\frac{|\x-\zeta|}{\langle\x+\zeta\rangle}\big)
g_1(\x,\theta,\zeta)\widehat{h}(\zeta)\,,
\end{aligned}
\end{equation}
where
\begin{equation}\label{cutoffRR2}
r_2(\x,\zeta):=\eta_{\epsilon}\big(\frac{|\x-\zeta|}{\langle\x+\zeta\rangle}\big)\,.
\end{equation}
In conclusion, by \eqref{Relle}, \eqref{def:Relle} and \eqref{RRelle}, 
we obtained
\[
\opbw(a)\circ\opbw(b)= \opbw(a\#_{\rho}b)+\mathcal{R}+\sum_{\ell=2}^{4}R_{\ell}
\]
where 
\begin{equation}\label{calRRR}
\begin{aligned}
&\widehat{(\mathcal{R}h)}(\x):=\mathcal{F}\big((R_1-Q)h\big)(\x):=
\sum_{\zeta,\theta\in \mathbb{Z}^{d}} \mathcal{R}(\x,\theta,\zeta)\widehat{h}(\zeta)\\
&\mathcal{R}(\x,\theta,\zeta)\stackrel{\eqref{cutoffR1}}{:=}
\Big[\eta_{\epsilon}\left(\frac{|\x-\theta|}{|\x+\theta|}\right)
\eta_{\epsilon}\left(\frac{|\theta-\zeta|}{|\theta+\zeta|}\right)-
\eta_{\epsilon}\big(\frac{|\x-\zeta|}{\langle\x+\zeta\rangle}\big)
\Big]g_1(\x,\theta,\zeta)
\end{aligned}
\end{equation}
To obtain the \eqref{espansionecompo} it remains to show that the terms
$\mathcal{R}, R_{\ell}$, $\ell=2,3,4$, satisfy the estimate \eqref{composit2}.

We start by considering the remainder $\mathcal{R}$ in \eqref{calRRR}. 
First of all, using the explicit formula \eqref{def:g1} for the coefficients $g_1(\x,\theta,\zeta)$ and reasoning as in Lemma \ref{non hom symbols Fourier},
we deduce that
\begin{equation}\label{decayG1}
|g_1(\x,\theta,\zeta)|\lesssim 
\langle \x-\theta\rangle^{-p}\langle \theta-\zeta\rangle^{-q}|a|_{m_1,p+L}
|b|_{m_2,q+L}
\langle\x+\zeta\rangle^{m_1+m_2}\,,
\end{equation}
for ny $p,q\in \mathbb{N}$.
We now study the properties of the cut-off function $(r_1-r_{2})(\x,\theta,\zeta)$ 
(see \eqref{cutoffR1}, \eqref{cutoffRR2})
appearing in \eqref{calRRR}.
Let us define the sets
\begin{align*}
D&:=\Big\{
(\x,\theta,\zeta)\in \mathbb{Z}^{3d}\; :\;
(r_1-r_2)(\x,\theta,\zeta)=0\Big\}\,,
\\
A&:=\Big\{
(\x,\theta,\zeta)\in \mathbb{Z}^{3d}\; :\;
\frac{|\x-\theta|}{\langle\x+\theta\rangle}\leq \frac{5\epsilon}{4}\,,\;\;
\frac{|\x-\zeta|}{\langle\x+\zeta\rangle}\leq \frac{5\epsilon}{4}\,,\;\;
\frac{|\theta-\zeta|}{\langle\theta+\zeta\rangle}\leq \frac{5\epsilon}{4}\Big\}\,,
\\
B&:=\Big\{
(\x,\theta,\zeta)\in \mathbb{Z}^{3d}\; :\;
\frac{|\x-\theta|}{\langle\x+\theta\rangle}\geq \frac{8\epsilon}{5}\,,\;\;
\frac{|\x-\zeta|}{\langle\x+\zeta\rangle}\geq \frac{8\epsilon}{5}\,,\;\;
\frac{|\theta-\zeta|}{\langle\theta+\zeta\rangle}\geq \frac{8\epsilon}{5}\Big\}\,.
\end{align*}
We note that
\[
D\supseteq A\cup B\quad \Rightarrow\quad D^{c}\subseteq A^{c}\cap B^{c}\,.
\]
Let $(\x,\theta,\zeta)\in D^{c}$ and assume in particular  that 
$(\x,\theta,\zeta)\in{\rm Supp}(r_1):=\ov{\{(\x,\theta,\zeta) : r_1\neq0\}}$. 
Then we can note that
\begin{equation}\label{navyseal2}
|\x-\zeta|\ll \langle\x+\zeta\rangle\,\quad {\rm and}\quad \langle\x\rangle \sim \langle\zeta\rangle.
\end{equation}
Notice also that $(\x,\theta,\zeta)\in{\rm Supp}(r_2)$ implies  the \eqref{navyseal2} as well.
We need to estimate
\[
\|R_0h\|_{{s+\rho-m_1-m_2}}^{2}\lesssim\sum_{\x\in\mathbb{Z}^{d}} \Big(
\sum_{\zeta,\theta}^{*} |g_1(\x,\theta,\zeta)||\hat{h}(\zeta)
|\langle\x\rangle^{s+\rho}
\Big)^{2}=I+II+III\,,
\]
where $\sum_{\zeta,\theta}^{*} $ denotes the sum over indexes satisfying 
\eqref{navyseal2}, the term $I$ denotes the sum on indexes satisfying also
$|\x-\theta|>c\epsilon |\x|$, $II$
denotes the sum on indexes satisfying also
$|\zeta-\theta|>c\epsilon |\zeta|$, for some $0<c\ll1$ and $III$ is defined by difference.
We estimate the term $I$. By using \eqref{navyseal2}, $|\x-\theta|>c\epsilon |\x|$ and \eqref{decayG1}, 
we get
\begin{equation*}
\begin{aligned}
I&
\lesssim
\sum_{\xi\in\Z^d}\Big(\sum_{\zeta,\theta}^{*}|g_1(\x,\theta,\zeta)||\hat{h}(\zeta)| 
\langle \zeta\rangle^s\langle\xi-\theta\rangle^{\rho-m_1-m_2}\Big)^2
\\&
\lesssim |a|^{2}_{m_1,s_0+\rho+L}
|b|^{2}_{m_2,s_0+L}
\||\hat{h}(\xi)| \langle \xi\rangle^s\star\langle\xi\rangle^{s_0+\rho}\star\langle\x\rangle^{-s_0}|\|_{\ell^2(\Z^d)}^2
\\&
\lesssim |a|^{2}_{m_1,s_0+\rho+L}
|b|^{2}_{m_2,s_0+L}
\||\hat{h}(\xi)| \langle \xi\rangle^s\|_{\ell^2(\Z^d)}^2
\lesssim  
|a|^{2}_{m_1,s_0+\rho+L}
|b|^{2}_{m_2,s_0+L}
\|h\|^{2}_{{s}}\,,
\end{aligned}
\end{equation*}
where we used
$s_0>d>d/2$.

Reasoning similarly one obtains  $II\lesssim
\|h\|_{{s}}^{2}|a|^{2}_{m_1+s_0+L}|b|^{2}_{m_2,s_0+\rho+L}$.
The sum $III$
is restricted to indexes satisfying 
\eqref{navyseal2} and $|\x-\theta|\leq c\epsilon |\x|$, $|\zeta-\theta|\leq c\epsilon |\zeta|$.
For $c\ll1$ small enough this restrictions implies that $(\x,\theta,\zeta)\in A$, which 
is a contradiction since $(\x,\theta,\zeta)\in D^{c}\subseteq A^{c}$.

For the remainders $R_{\ell}$, $\ell=2,3,4$ in \eqref{def:Relle} one can reason similarly using 
the explicit formul\ae\, \eqref{def:g2}-\eqref{def:g4}
to show that $g_{\ell}$ are symbols of order at least $L+1$ or $\rho$. Therefore one concludes the proof by choosing 
$L$ large enough.
\end{proof}
By the Proposition above we deduce the following.

\begin{lemma}{\bf (Compositions and commutators).}\label{teoremadicomposizione}

\noindent
$(i)$ Let $a \in \Sigma^m_1$, $b \in \Sigma^{m'}_1$ and let $N \in \N$. 
Then the operator ${\rm Op}^{bw}(a)\circ {\rm Op}^{bw}(b)$ satisfies 
\[
{\rm Op}^{bw}(a)\circ {\rm Op}^{bw}(b) 
= {\rm Op}^{bw}(a b + \frac{1}{2 \ii} \{ a, b \}) + {\rm Op}^{bw}(r_{a b}) + {\cal R}_{a b}(U)\,,
\]
where 
$r_{ab} \in \Gamma_2^{m + m' - 2\delta}$ and the map  
$(U, w) \mapsto {\cal R}_{ab}(U)[w]$ 
belongs to the class ${\cal S}_{2}(N)$.  
As a consequence, the commutator
\[
[{\rm Op}^{bw}(a), {\rm Op}^{bw}(b)] 
=  \frac{1}{\ii}{\rm Op}^{bw}(\{ a, b \}) + {\rm Op}^{bw}(r_{ab} - r_{b a}) 
+ {\cal R}_{a b}(U) - {\cal R}_{b a}(U)\,. 
\]

\noindent
$(ii)$ Let $a \in \Sigma^m_1$ and 
$N \in \N$. 
Then, recalling \eqref{def Lambda xi},  
the Poisson bracket $\{ \Lambda, a \} \in \Sigma_1^{m + 1}$ and 
\[
\begin{aligned}
{\rm Op}^{bw}(\Lambda) \circ{\rm Op}^{bw}(a) 
&= {\rm Op}^{bw}(\Lambda a) + \frac{1}{2 \ii} {\rm Op}^{bw}(\{ \Lambda, a \})
+ {\rm Op}^{bw}(r_{\Lambda a}) 
+\mathcal{R}_{\Lambda a}(U)\,,\\
[{\rm Op}^{bw}(\Lambda), {\rm Op}^{bw}(a)] 
&= \frac{1}{\ii}{\rm Op}^{bw}(\{ \Lambda, a \}) 
+ {\rm Op}^{bw}(r_{\Lambda a}-r_{a \Lambda}) 
+\mathcal{R}_{\Lambda a}(U)-\mathcal{R}_{a \Lambda }(U)\,,
\end{aligned}
\]
where $r_{\Lambda a} \in \Sigma_1^{m +1- 2\delta}$ and   
${\cal R}_{\Lambda a}(U), {\cal R}_{a \Lambda}(U)$ are in ${\cal S}(N)$.   
\end{lemma}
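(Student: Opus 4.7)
The plan is to deduce Lemma \ref{teoremadicomposizione} from Proposition \ref{prop:composit} by tracking two pieces of data through the expansion \eqref{espansione2}: the \emph{order} in $\xi$ and the \emph{homogeneity} in $U$. Given $N\in\N$, I would first choose $\rho := N + \lceil m+m'\rceil$ so that Proposition \ref{prop:composit} gains exactly $N$ derivatives. For $a\in\Sigma^{m}_1$, $b\in\Sigma^{m'}_1$ the frozen symbols $a(U;\cdot)$, $b(U;\cdot)$ lie in the appropriate $\mathcal{N}^{\cdot}_{s_0+q(\rho)}$ with seminorms $\lesssim \|U\|_{\sigma}$ (by Remark \ref{stime simboli in omogeneita}), so Proposition \ref{prop:composit} applied pointwise in $U$ gives
\[
\opbw(a)\circ\opbw(b)\ =\ \opbw(a\#_\rho b)+R(U),\qquad \|R(U)h\|_{s+N}\lesssim \|U\|_\sigma^2\,\|h\|_s,
\]
and the quadratic tame bound on $R(U)$ is exactly the one required in Definition \ref{nonomosmooth}, placing $\mathcal{R}_{ab}(U):=R(U)$ in $\mathcal{S}_2(N)$.

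For the symbolic part, Remark \ref{espansEsplic} gives the splitting $a\#_\rho b = ab + \tfrac{1}{2\ii}\{a,b\} + r_{ab}$, where $r_{ab}$ collects the $k\ge 2$ terms of \eqref{espansione2} and therefore has order at most $m+m'-2\delta$. The crucial point is that every bilinear monomial $\partial^{\alpha}_{x,\xi}a\cdot \partial^{\beta}_{x,\xi}b$ appearing in this expansion (including $ab$ and $\{a,b\}$ themselves) is a product of two factors, each vanishing at least linearly at $U=0$ since $a,b\in\Sigma_1=O_1+\Gamma_2$; hence every such monomial vanishes at least quadratically in $U$, forcing $r_{ab}\in\Gamma^{m+m'-2\delta}_2$. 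The commutator formula is then obtained by subtracting the analogous expansion for $\opbw(b)\circ\opbw(a)$: the terms $ab$ cancel, the Poisson bracket doubles by antisymmetry (producing the factor $1/\ii$ in place of $1/(2\ii)$), and the subleading symbols and smoothing remainders subtract.

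Part (ii) is the same scheme applied with $\Lambda$ in place of one of the two symbols, using $|\Lambda|_{2,s}\lesssim_s 1$ from Remark \ref{rmk:simboloLambda}. Two simplifications now occur: first, since $\Lambda(\xi)$ is independent of both $x$ and $U$, one has $\{\Lambda,a\}=\partial_\xi\Lambda\cdot\partial_x a\in\Sigma^{m+1}_1$ and the tail of the expansion remains in $\Sigma_1$ (no extra $U$-factor is contributed by $\Lambda$); in particular $r_{\Lambda a}\in\Sigma^{m+1-2\delta}_1$. Second, the smoothing remainder now obeys the linear bound $\|\mathcal{R}_{\Lambda a}(U)h\|_{s+N}\lesssim \|U\|_\sigma\|h\|_s$; splitting $a=a_l+a_q$ with $a_l\in O^m_1$ and $a_q\in\Gamma^m_2$ places the two contributions respectively in $\mathcal{O}\mathcal{S}_1(N)$ (using the translation-invariant bilinear structure of paradifferential composition, which produces precisely the form \eqref{smoothing bilineare}) and in $\mathcal{S}_2(N)$, so $\mathcal{R}_{\Lambda a}(U)\in\mathcal{S}(N)$. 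Antisymmetrization yields the commutator formula as in (i). The only genuine subtlety in the argument is the bookkeeping distinguishing the linear classes $\Sigma_1$, $\mathcal{O}\mathcal{S}_1$ from the quadratic classes $\Gamma_2$, $\mathcal{S}_2$; the choice of $\rho$ and the resulting loss of $q(\rho)$ derivatives in the seminorms appearing in \eqref{composit2} are absorbed trivially into the Sobolev index $\sigma$.
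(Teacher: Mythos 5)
Your proposal is correct and follows essentially the same route as the paper: it applies Proposition \ref{prop:composit}, reads off the leading terms $ab+\tfrac{1}{2\ii}\{a,b\}$ from the expansion \eqref{espansione2} (Remark \ref{espansEsplic}), and then tracks homogeneity in $U$ through the bilinear structure of the estimate \eqref{composit2} to place $r_{ab}$ in $\Gamma^{m+m'-2\delta}_2$ and $\mathcal{R}_{ab}$ in $\mathcal{S}_2(N)$, with the one-factor simplifications (no extra $U$-factor, linear piece landing in $\mathcal{O}\mathcal{S}_1$ via the form \eqref{smoothing bilineare}) when one symbol is the $U$-independent $\Lambda$. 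This is exactly the bookkeeping the paper's proof invokes when it refers to ``the homogeneity expansions of symbols and remainders.''
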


\begin{proof}
It follows by Proposition \ref{prop:composit}, using formula \eqref{espansione2}. The homogeneity 
expansions of symbols and remainders can be deduced 
by the formul\ae\, in the proof of the Proposition.
\end{proof}

We also have the following result about the composition between the smoothing operators introduced
in Def. \ref{nonomosmooth}-\ref{smoothopera}.

\begin{lemma}
Let $N\in \mathbb{N}$, $m\in \mathbb{R}$, $a\in \Sigma_1^{m}$ and $R,Q\in \mathcal{S}(N)$.
Then one has 

\noindent
$(i)$
$ R(U)\circ Q(U) $ and $ Q(U)\circ R(U) $ 
are smoothing operators in $\mathcal{S}_2(N)$.

\noindent
$(ii)$ 
$ R(U)\circ \opbw(a(U;x,\x)) $, $  \opbw(a(U;x,\x))\circ R(U) $
are   in $\mathcal{S}_2(N-m)$.
\end{lemma}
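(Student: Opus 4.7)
The plan is to decompose each smoothing operator into its ``homogeneous'' pieces according to Definition \ref{smoothopera} and to treat the resulting terms one by one using the tame bounds \eqref{marlene1}--\eqref{marlene2}. Concretely, I would write $R = R_1 + R_2$ and $Q = Q_1 + Q_2$ with $R_1, Q_1 \in \mathcal{O}\mathcal{S}_1(N)$ depending linearly on $U$ and $R_2, Q_2 \in \mathcal{S}_2(N)$ vanishing to order $2$ at $U = 0$.

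For part $(i)$, I would expand $R(U)\circ Q(U)$ into the four summands $R_iQ_j$, $i,j\in\{1,2\}$. Each factor maps $H^s\to H^{s+N}$ tamely, so by the embedding $H^{s+N}\hookrightarrow H^s$ the outer factor still acts continuously on $H^{s+N}$, and the composition therefore gains $N$ derivatives on $h\in H^s$. The tame estimate is obtained by multiplying the bounds \eqref{marlene1}--\eqref{marlene2}, yielding a factor $\|U\|_\rho^{p_i+p_j}$ where $p_i,p_j\in\{1,2\}$ are the vanishing orders of the two factors. Since $p_i+p_j\geq 2$ in every case, each summand lies in $\mathcal{S}_2(N)$, and the same argument handles $Q(U)\circ R(U)$.

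For part $(ii)$, I would use Definition \ref{simboli} to split $a = a_l + a_q$ with $a_l\in O_1^m$ and $a_q\in \Gamma_2^m$. By Remark \ref{stime simboli in omogeneita} and Lemma \ref{standard cont bony}, $\opbw(a_l(U;\cdot))$ acts boundedly $H^s\to H^{s-m}$ with norm $\lesssim_s\|U\|_{\sigma_s}$, while $\opbw(a_q(U;\cdot))$ has norm $\lesssim_s\|U\|_{\sigma_s}^2$. Composing with $R = R_1 + R_2$ again produces four summands, each gaining $N - m$ derivatives; every one of them vanishes to order at least $2$ in $U$ (linear times linear, linear times quadratic, and so on), so the whole composition belongs to $\mathcal{S}_2(N-m)$. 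The reverse ordering $\opbw(a)\circ R$ is treated in exactly the same way.

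I do not expect a genuine obstacle: the argument is essentially symbolic bookkeeping built from the tame bounds of Definitions \ref{nonomosmooth}--\ref{smoothing lineare u} and the mapping property of Lemma \ref{standard cont bony}. The only point requiring care is to choose the auxiliary regularity thresholds $\rho$ and $\sigma_s$ large enough so that all these estimates can be applied simultaneously at the same value of $s$.
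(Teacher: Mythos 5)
Your argument follows essentially the same route as the paper: decompose $R=R_1+R_2$, $Q=Q_1+Q_2$ and $a=a_l+a_q$ per Definitions \ref{smoothopera} and \ref{simboli}, then compose the pieces using the tame bounds \eqref{marlene1}--\eqref{marlene2}, Lemma \ref{standard cont bony}, and Remark \ref{stime simboli in omogeneita}, checking that the vanishing order in $U$ is at least $2$ and that the loss/gain of derivatives works out. The paper's proof is terser but rests on precisely this bookkeeping, so the proposal is correct and not a different approach.
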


\begin{proof}
By Definition \ref{smoothopera} we can write $R=R_1+R_2$, $Q=Q_1+Q_2$
for some  $R_1,Q_1\in \mathcal{OS}_1(N)$ and $R_2,Q_2\in \mathcal{S}_2(N)$
(see Def. \ref{nonomosmooth}-\ref{smoothing lineare u}).
Then item $(i)$ follows by using estimates \eqref{marlene1} and \eqref{marlene2}.
Item $(ii)$ follows similarly by using also Lemma \ref{standard cont bony}
and Remark \ref{stime simboli in omogeneita}.
\end{proof}

\section{Technical Lemmata}

\subsection{Flows and conjugations}
In this section we prove some abstract results about the conjugation of 
paradifferential operators and smoothing remainders under flows.

Consider a real symbol $g \in \Sigma^m_1$ with $m < 1$ and 
 the flow $\Phi^\tau_g(U)$, $\tau\in[-1,1]$
 defined by 
 \begin{equation}\label{flusso1para}
 \left\{\begin{aligned}
 &\pa_{\tau}\Phi^{\tau}_{g}(U)=\ii G(U)\Phi^{\tau}_{g}(U)\,,\qquad G(U):=\opbw(g(U;x,\x))\,,\\
 &\Phi^{0}(U)=\uno\,.
 \end{aligned}\right.
 \end{equation}
 We have the following.
 \begin{lemma}{\bf (Linear flows).}\label{lemm:flussoGGG}
 There are $s_0>d/2$ and $r>0$ such that, for any $U=\vect{u}{\bar{u}}$ with
 $u\in H^{s}(\mathbb{T}^{d};\mathbb{C})\cap B_{s_0}(r)$, for any $s>0$
 the problem \eqref{flusso1para} admits a unique solution $\Phi^{\tau}_{g}(U)$
 satisfying
 
 \begin{equation}\label{stime flusso elementary}
 \begin{aligned}
\| \Phi_g^\tau (U) w \|_{s} &\leq \| w \|_s (1 + C(s) \| u \|_\rho)\,, 
\qquad \forall\, w\in H^{s}(\mathbb{T}^{d};\mathbb{C})\,,\\
 \| (\Phi_g^\tau(U) - {\rm Id}) v \|_s &\lesssim_s \| u \|_\rho \| v \|_{s + m}\,, 
 \qquad\forall\, v\in H^{s+m}(\mathbb{T}^{d};\mathbb{C})\,,
 \end{aligned}
\end{equation}
for some $C(s)>0$, 
uniformly in $\tau\in[0,1]$.
The map (see \eqref{hcic})
\begin{equation}\label{Phicic}
{\bf \Phi}^{\tau}_{g}(U):=\left(
\begin{matrix}
\Phi_g^\tau (U) \vspace{0.2em}\\
\ov{\Phi_g^\tau (U)}
\end{matrix}
\right)\; : \; {\bf H}^{s}\to {\bf H}^{s}
\end{equation}
is symplectic according to Definition \ref{mappasimpl}.
 \end{lemma}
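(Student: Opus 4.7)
Since $m<1$ and $g \in \Sigma_1^m$ vanishes of order $\geq 1$ at $U=0$, Lemma \ref{standard cont bony} and Remark \ref{stime simboli in omogeneita} yield $\|G(U)\|_{\mathcal{L}(H^{s+m},H^s)} \lesssim_s \|u\|_\rho$ for some $\rho \gg s_0$. In particular $G(U) \in \mathcal{L}(H^s)$ for every $s \geq 0$ (losing at most $m$ derivatives), so the linear non-autonomous ODE \eqref{flusso1para} admits a unique solution $\Phi^\tau_g(U) \in \mathcal{L}(H^s, H^s)$, continuous in $\tau \in [-1,1]$, by a standard Banach-space Picard iteration.

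To prove the first tame bound in \eqref{stime flusso elementary}, I would set $v(\tau) := \langle D\rangle^s \Phi^\tau_g(U) w$ and compute
\[
\tfrac{d}{d\tau}\|v(\tau)\|_{L^2}^2 = 2\Re\bigl(\langle D\rangle^s \ii G(U)\langle D\rangle^{-s} v(\tau), v(\tau)\bigr)_{L^2}.
\]
Since $g$ is real, the formula $A^* = \opbw(\overline{a})$ recalled before \eqref{simboAggiunto2} gives $G(U)^* = G(U)$, so the symmetric part of the conjugated generator equals
\[
\tfrac{1}{2}\bigl([\langle D\rangle^s, \ii G]\langle D\rangle^{-s} - \langle D\rangle^{-s}[\langle D\rangle^s, \ii G]\bigr).
\]
By Lemma \ref{teoremadicomposizione}, the commutator $[\langle D\rangle^s, G(U)]$ is a paradifferential operator plus smoothing, whose principal symbol $\tfrac{1}{\ii}\{\langle\xi\rangle^s, g\}$ has order $s+m-1$; crucially, because $m<1$, after conjugation by $\langle D\rangle^{-s}$ we obtain an operator bounded on $L^2$ with norm $\lesssim_s \|u\|_\rho$. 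Grönwall's inequality then yields $\|\Phi^\tau_g(U) w\|_s \leq \|w\|_s \exp(C(s)\|u\|_\rho)$, from which the first estimate follows for $\|u\|_\rho \leq r$ small enough. The second inequality is an immediate consequence of the Duhamel-type identity $(\Phi^\tau_g(U) - \uno)v = \int_0^\tau \ii G(U) \Phi^\sigma_g(U) v\, d\sigma$, combined with $\|G(U)h\|_s \lesssim_s \|u\|_\rho \|h\|_{s+m}$ and the first estimate applied at regularity $s+m$.

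For the symplecticity, the matrix flow $\mathbf{\Phi}^\tau_g(U)$ defined in \eqref{Phicic} solves $\partial_\tau \mathbf{\Phi}^\tau_g(U) = \mathbf{G}(U) \mathbf{\Phi}^\tau_g(U)$ with generator $\mathbf{G}(U) = \mathrm{diag}(\ii G(U), -\ii \overline{G(U)})$. A direct computation using $G(U)^* = G(U)$ shows that $\mathbf{G}(U)^*(-\ii E) + (-\ii E)\mathbf{G}(U) = 0$, which is precisely the Hamiltonian condition of Definition \ref{selfi}. Differentiating $M(\tau) := \mathbf{\Phi}^{\tau*}_g(U)(-\ii E)\mathbf{\Phi}^\tau_g(U)$ with this identity gives $M'(\tau) \equiv 0$, and since $M(0) = -\ii E$ the symplecticity condition \eqref{condsimpl} holds for every $\tau \in [-1,1]$.

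The main obstacle is the $H^s$ energy estimate: one must extract the gain of $1-m>0$ derivatives from the a priori $(s+m-1)$-order commutator $[\langle D\rangle^s, G(U)]$ and keep the constant \emph{tame}, i.e.\ linear in the low norm $\|u\|_\rho$ (with a prefactor $C(s)$ allowed to depend on $s$). This is precisely the role of the hypothesis $m<1$; for $m=1$ or larger one would first need a genuine paradifferential reduction to a normal form of the symbol before energy estimates could close.
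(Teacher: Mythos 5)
Your proof is correct and follows essentially the same approach the paper has in mind (standard $L^2$-conjugated energy estimate exploiting that $g$ real makes $\ii\opbw(g)$ skew-adjoint, Duhamel for the difference estimate, and symplecticity from the Hamiltonian structure of the generator); the paper itself merely cites \cite{BD} Lemma 3.22 and \cite{Feola-Iandoli-Loc} Lemma 2.1 for these details. One small slip: the symmetric part of $\langle D\rangle^s \ii G \langle D\rangle^{-s}$ is $\tfrac{1}{2}\bigl([\langle D\rangle^s, \ii G]\langle D\rangle^{-s} + \langle D\rangle^{-s}[\langle D\rangle^s, \ii G]\bigr)$ with a \emph{plus} sign (since $(\ii G)^*=-\ii G$ already flips one sign), but this does not affect the conclusion, as both summands are order $m-\delta<0$ hence $L^2$-bounded with tame norm $\lesssim_s\|u\|_\rho$.
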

 \begin{proof}
 The result follows by a standard energy estimate
 using the fact that the symbol $g(U;x,\x)$ is \emph{real} valued.  For more details
 we refer to Lemma 3.22 in \cite{BD}.
 The map ${\bf \Phi}_{g}^{\tau}$ in \eqref{Phicic} can be seen as the linear flow generated by
 the field
 $\mathcal{G}(U)=\ii E\uno G(U)$. Therefore one can check that it is symplectic 
 by reasoning as in Lemma 2.1 in \cite{Feola-Iandoli-Loc}.
 \end{proof}

We set $\Phi_g(U) := \Phi^1_g(U)$ and its inverse $\Phi_g(U)^{-1} : = \Phi^\tau_g(U)_{| \tau = - 1}$. 
The following lemma holds. 

\begin{lemma}{\bf (Conjugation of operators under paradifferential flows)}\label{coniugio flusso diag}
Let $g \in \Sigma^n_1$ with $n < \delta$ and 
assume that $g(U; x, \xi)$ is a real symbol. 
Then the following holds. 

\noindent
$(i)$ If $a \in \Sigma^m_1$, for any fixed $N \in \N$, one has 
\[
\Phi_g(U)^{- 1} {\rm Op}^{bw}(a) \Phi_g(U) = 
{\rm Op}^{bw}(a) + {\rm Op}^{bw}(b) + {\cal R}(U)\,,
\]
where $b \in \Gamma_2^{m + n - \delta}$, ${\cal R} \in {\cal S}_2( N)$.
If the symbol $a$ is real valued, then $b$ is real valued as well.

\noindent
$(ii)$ For any fixed $N \in \N$, one has  (see \eqref{def Lambda xi})
\[
\Phi_g(U)^{- 1} {\rm Op}^{bw}(\Lambda) \Phi_g(U) 
= {\rm Op}^{bw}(\Lambda) + {\rm Op}^{bw}(\{ \Lambda, g \}) 
+ {\rm Op}^{bw}(b) + {\cal R}(U) \,,
\]
where $b$ is a real valued symbol in $\Sigma_1^{n + 1 -(\delta - n)}$ and ${\cal R} \in {\cal S}(N)$. 

\noindent
$(iii)$ Let ${\cal R}$ be in ${\cal S}( N)$. 
Then ${\cal R}_1(U) := \Phi_g(U)^{- 1} {\cal R}(U) \Phi_g(U)$ 
is in the class ${\cal S}( N - n)$. 
\end{lemma}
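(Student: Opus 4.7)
All three items follow from the Heisenberg representation of the flow \eqref{flusso1para}. For any linear operator $A$ set $A(\tau) := \Phi_g^\tau(U)^{-1} A\, \Phi_g^\tau(U)$; since $\partial_\tau [\Phi_g^\tau(U)^{-1}] = -\Phi_g^\tau(U)^{-1} \ii G(U)$, the identity $\partial_\tau A(\tau) = \Phi_g^\tau(U)^{-1}[A,\ii G(U)]\Phi_g^\tau(U)$ yields the integral expansion
\begin{equation*}
\Phi_g(U)^{-1} A\, \Phi_g(U) \; = \; A \; + \; \int_0^1 \Phi_g^\tau(U)^{-1}\,[A, \ii G(U)]\, \Phi_g^\tau(U)\,d\tau.
\end{equation*}
The whole argument is then iterative application of this identity, expanding the commutator by Lemma \ref{teoremadicomposizione} and controlling the conjugating flows with Lemma \ref{lemm:flussoGGG}.

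For item $(i)$ with $A = \opbw(a)$, Lemma \ref{teoremadicomposizione}(i) gives $[\opbw(a),\ii G(U)] = \opbw(\{a,g\}) + \opbw(r^{(1)}) + \mathcal{R}^{(1)}(U)$, where $\{a,g\}\in \Gamma_2^{m+n-\delta}$ (quadratic in $U$ because both $a$ and $g$ vanish there), $r^{(1)} \in \Gamma_2^{m+n-2\delta}$ and $\mathcal{R}^{(1)}(U) \in \mathcal{S}_2(N)$. Substituting in and iterating, each additional commutator with $\ii G$ drops the paradifferential order by at least $\delta - n > 0$; after $K$ steps with $K(\delta - n) \ge m + N$ the residue is absorbed into $\mathcal{S}_2(N)$. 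Collecting the retained symbol contributions into a single $b \in \Gamma_2^{m+n-\delta}$ and the smoothing tails into $\mathcal{R}(U) \in \mathcal{S}_2(N)$ proves the statement. Reality of $b$ when $a$ is real follows inductively from the fact that Poisson brackets of real symbols are real.

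Item $(ii)$ is analogous but uses Lemma \ref{teoremadicomposizione}(ii): the first commutator $[\opbw(\Lambda),\ii G(U)]$ produces the leading symbol $\{\Lambda, g\} \in O_1^{n+1}$ explicitly, plus lower order symbolic corrections and smoothing remainders. Every subsequent commutator with $\ii G$ drops the paradifferential order by $\delta - n$, so the first residue has order $\le n + 1 - (\delta - n) = 2n+1-\delta$. We therefore isolate $\opbw(\{\Lambda, g\})$ as the explicit correction and lump all further iterates into $\opbw(b)$ with $b \in \Sigma_1^{n+1-(\delta-n)}$ and a smoothing remainder $\mathcal{R}(U) \in \mathcal{S}(N)$, with reality preserved at each step. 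For item $(iii)$, one applies the same integral identity to $A = \mathcal{R}(U) \in \mathcal{S}(N)$: by the last lemma of Section \ref{sez calcolo simbolico}, both $\opbw(g)\mathcal{R}(U)$ and $\mathcal{R}(U)\opbw(g)$ lie in $\mathcal{S}_2(N-n)$, so $[\mathcal{R}(U),\ii G(U)] \in \mathcal{S}(N-n)$. Conjugation by the flow is $H^s$-bounded (Lemma \ref{lemm:flussoGGG}), so the smoothing order is preserved, and the $\tau$-integral is routine.

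The most delicate point is the bookkeeping for item $(ii)$: one must truncate the Lie series at precisely the right depth so that the retained paradifferential symbol $b$ keeps order exactly $2n+1-\delta$, while the cumulative smoothing tail simultaneously attains the prescribed order $N$. Keeping the tame dependence on $\|U\|_\rho$ from Remark \ref{stime simboli in omogeneita} and \eqref{marlene1}-\eqref{marlene2} uniform in $\tau \in [0,1]$ through the iteration is a routine induction once the flow estimates of Lemma \ref{lemm:flussoGGG} are available.
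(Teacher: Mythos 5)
Your argument is correct and follows essentially the same route as the paper: a Lie (Heisenberg/Duhamel) expansion truncated at a depth chosen so that, since $n<\delta$, each commutator with $\ii\opbw(g)$ drops the paradifferential order by $\delta-n$, with the surviving symbol contributions collected into $b$ and the tail absorbed into the smoothing class, and item (iii) disposed of by the flow bounds of Lemma \ref{lemm:flussoGGG} together with \eqref{marlene1}--\eqref{marlene2}. The paper writes the expansion as a finite Taylor/Lie series of order $L$ with an integral remainder rather than iterating the one-step integral identity, but the bookkeeping and the role of the composition Lemma \ref{teoremadicomposizione} are the same.
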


\begin{proof}
\emph{Item} $(i)$.
Using \eqref{flusso1para} we get, for $L\geq 3$, the 
 Lie  expansion 
\begin{align}
 \Phi{g}(U)^{-1} \opbw(a)&\Phi_{g}(U)=\opbw(a)+ \big[ \opbw(a),  \opbw(\ii g) \big]
+ \sum_{k=2}^{L}\frac{(-1)^{k}}{k!}{\rm Ad}^{k}_{\opbw(\ii g)}[\opbw(a)]\nonumber
\\&
+\frac{(-1)^{L+1}}{L!}\int_{0}^{1} (1-\theta)^{L} \Phi^{-\theta}(U)
\big({\rm Ad}^{L+1}_{\opbw(\ii g)}[\opbw(a)] \big)\Phi^{\theta}(U) d \theta\,,\label{int-Lie}
\end{align}
where we defined ${\rm Ad}_{G}[A]:=[G,A]$ and 
${\rm Ad}^{k}_{G}[A]:={\rm Ad}_{G}\big[{\rm Ad}^{k-1}_{G}[A]\big]$
for $k\geq2$.
By Lemma \ref{teoremadicomposizione} 
(possibly replacing the smoothing index $N$ 
by some $\tilde{N}$  chosen below large enough)
and Remark \ref{espansEsplic} 
we get
$$
{\rm Ad}_{\opbw(\ii g)}[\opbw(a)]  =\big[\opbw(\ii g), \opbw(a)\big] 
 = \opbw\big( \{  g, a \} + r_1 \big) \, , \quad  r_1\in\Sigma_{1}^{m+n-2\delta} \, , 
$$
up to a smoothing operator in $\mathcal{S}_2(\tilde{N}-m-n)$.
Similarly, 
by  induction, for $ k \geq 2 $ we have  
\[
{\rm Ad}^{k}_{\opbw(\ii g)}[\opbw(a)]=\opbw( b_k),\quad 
b_k\in
\Sigma_1^{k(n-\delta)+m} \, , 
\]
up to a smoothing operator in  $\mathcal{S}_2(\tilde{N}-m-kn)$.
We choose $L$  in such a way that  
$  (L+1)(\delta-n) - m \geq \rho$ and $ L + 1 \geq 3 $,  so that the operator 
$ \opbw( b_{L+1}) $ belongs to $\mathcal{S}_2(N)$.
The integral Taylor remainder in \eqref{int-Lie} 
belongs to $ \mathcal{S}_2(N)$
 as well by item $(iii)$ that we proved above.
Then we choose $\tilde{N}$  large enough so that $ \tilde{N} - m - ( L + 1)n \geq  N $
and the remainders are  $N$-smoothing.
Assume now that $a\in \Sigma_1^{m}$ is real valued. Using formula \eqref{espansione2} 
one can check that also the symbol $b$ constructed through the expansion above is real valued.

Item $(ii)$ follows by reasoning as done for item by replacing $a$ with the symbol 
 $\Lambda(\xi) := \| \xi \|_g^2 + m$. 
 Item $(iii)$ follows by using estimates \eqref{marlene1}, \eqref{marlene2} 
 on the remainder $\mathcal{R}$
and the second estimate in \eqref{stime flusso elementary} 
on the map $\Phi_{g}(U)$.
 This concludes the proof.
\end{proof}

Consider now a smooth vector field $X_{NLS}: {\bf H}^{s}\to {\bf H}^{s-2}$ (see \eqref{hcic})
satisfying, for $s\gg1$, 
\begin{equation}\label{ipovector}
\begin{aligned}
\|X_{NLS}(U)\|_{{s-2}}
&\lesssim_{s}\|u\|_{{s}}(1+\|u\|_{{s}})\,,\qquad \forall\, U=\vect{u}{\bar{u}}\in {\bf H}^{s}\,,
\\
\|dX_{NLS}(U)[H_1]\|_{{s-2}}&\lesssim_{s}\|H_1\|_{{s}}(1+\|u\|_{{s}})\,,\qquad 
\forall\, U,H_1\in{\bf H}^{s}\,,
\\
\|d^n X_{NLS}(U)[H_1, \ldots, H_n]\|_{{s-2}}
&\lesssim_{s}  \| H_1 \|_{{s}} \ldots \| H_n \|_{{s}}\,,
\qquad \forall\,, U,H_1,\ldots,H_{n}\in{\bf H}^{s}\,,\;\;n\geq2\,.
\end{aligned}
\end{equation}

\begin{lemma}\label{paTsimbopsi}
Let $g \in \Sigma^m_1$ and 
assume that $U(t, x)$ is a solution belonging to $C^{0}([0,T];{\bf H}^{s})$, $T>0$, $s\gg1$
of the 
Schr\"odinger equation $\partial_t U = X_{NLS}(U)$. 
Then $\partial_t \psi(U(t); x, \xi) = a_\psi(U(t); x, \xi)$ where 
the symbol $a_\psi(U; x, \xi)$ belongs to the class $\Sigma_1^m$
with estimates uniform in $t\in[0,T]$.
\end{lemma}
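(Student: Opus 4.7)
The plan is to apply the chain rule and exploit the fact that $\Sigma_1^m = O_1^m + \Gamma_2^m$ is stable under the natural operations arising from time-differentiation along trajectories of the equation. Since $U(t)$ solves $\partial_t U = X_{NLS}(U)$, the natural candidate is
\[
a_\psi(U; x, \xi) := dg(U; x, \xi)[X_{NLS}(U)]\,,
\]
and the task reduces to showing that $U \mapsto a_\psi(U; \cdot, \cdot)$ satisfies the requirements of Definitions \ref{simboli non lineari}, \ref{simboli lineari}, \ref{simboli}.

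First I would decompose $g = g_l + g_q$ with $g_l \in O_1^m$ and $g_q \in \Gamma_2^m$, and decompose the vector field as $X_{NLS}(U) = L U + N(U)$, where $L = \ii(\Delta_g - m)$ is the linear (unbounded, of order $2$) part and $N(U) = -\ii Q(u,\bar u)$ is the genuinely quadratic part. Because $g_l$ is linear in $U$, one has $dg_l(U)[H] = g_l(H; x, \xi)$, so the four contributions to analyze are
\[
g_l(LU),\qquad g_l(N(U)),\qquad dg_q(U)[LU],\qquad dg_q(U)[N(U)]\,.
\]
The first is linear in $U$ with the Fourier-multiplier structure of Definition \ref{simboli lineari} (the coefficients $m_\sigma(k, \xi)$ get replaced by $\lambda(k)\, m_\sigma(k, \xi)$), and hence belongs to $O_1^m$. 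The remaining three terms are quadratic (or higher) in $U$; their smoothness follows from the smoothness of $g_q$ and of $X_{NLS}$, while the seminorm bounds follow from Remark \ref{stime simboli in omogeneita} applied to $g_q$ combined with \eqref{ipovector}, e.g.
\[
|dg_q(U)[X_{NLS}(U)]|_{m,s} \lesssim_s \|U\|_{\sigma_s}\,\|X_{NLS}(U)\|_{\sigma_s} \lesssim_s \|U\|_{\sigma_s+2}^{2}\bigl(1 + \|U\|_{\sigma_s+2}\bigr)\,,
\]
with analogous bounds on higher Fr\'echet differentials. Since $g_q$ vanishes of order $2$ and $X_{NLS}$ vanishes of order $1$ at $U=0$, the products $dg_q(U)[LU]$ and $dg_q(U)[N(U)]$ vanish of order $2$, and $g_l(N(U))$ vanishes of order $2$; all three therefore lie in $\Gamma_2^m$, giving $a_\psi \in \Sigma_1^m$.

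The only real bookkeeping obstacle is the two-derivative loss caused by the unbounded linear part of $X_{NLS}$: controlling $\|LU\|_{\sigma_s}$ requires $\sigma_s + 2$ derivatives on $U$. This is absorbed without difficulty, since Definition \ref{simboli non lineari} already allows the loss index $\sigma_s \gg s$ to be chosen arbitrarily large; one simply replaces $\sigma_s$ by $\sigma_s + 2$. Uniformity in $t \in [0,T]$ is then immediate from the hypothesis $U \in C^0([0,T]; {\bf H}^s)$ with $s \gg 1$, because every estimate above bounds the relevant seminorm of $a_\psi(U(t); \cdot)$ by a polynomial in $\|U(t)\|_{\sigma_s+2}$, which is continuous (hence bounded) on $[0,T]$.
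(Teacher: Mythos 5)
Your proof is correct and follows the same route as the paper: apply the chain rule to identify $a_\psi(U;x,\xi) := dg(U;x,\xi)[X_{NLS}(U)]$, and then conclude membership in $\Sigma_1^m$ from Remark \ref{stime simboli in omogeneita} together with the vector field bounds \eqref{ipovector}. The paper states this in one line; you simply make explicit the decomposition $g=g_l+g_q$, $X_{NLS}=LU+N(U)$ and the absorption of the two-derivative loss into the index $\sigma_s$, which is exactly the bookkeeping the paper leaves implicit.
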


\begin{proof}
One has that 
\[
\partial_t \psi(U(t); x, \xi) = d \psi(U(t); x, \xi)[\partial_t U] = d \psi(U(t) ; x, \xi)[X_{NLS}(U(t))]\,.
\]
Hence the symbol $a_\psi$ is defined by 
$
a_\psi(U; x, \xi) := d \psi(U ; x, \xi)[X_{NLS}(U)]\,.
$
Then the result follows by using Remark \ref{stime simboli in omogeneita} and 
estimates  \eqref{ipovector}.
\end{proof}

\begin{lemma}{\bf (Conjugation of $\pa_{t}$ under paradifferential flows).}\label{coniugio partial t flusso diag}
Let $g \in \Sigma_1^n$ with $n < \delta$ and $g(U; x, \xi)$. 
Consider a vector field $X_{NLS}$ satisfying \eqref{ipovector}.
Assume that $\partial_t U(t) = X_{NLS}(U(t))$ and $U\in C^{0}([0,T];{\bf H}^{s})$ for some $T>0$, $s\gg1$. 
Then for any $N \in \N$
\[
\Phi_g(U(t))^{- 1} \circ \partial_t \circ \Phi_g(U(t)) = 
\partial_t + {\rm Op}^{bw}(b(U(t); x, \xi))) + {\cal R}(U(t))\,,
\]
where $b(U; x, \xi)$ is a purely imaginary symbol in  $ \Sigma^{n}_1$ 
and the map $(U, w) \mapsto {\cal R}(U)[w]$ is in the class ${\cal S}(N)$.  
\end{lemma}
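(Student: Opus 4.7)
The plan is to differentiate the flow $\Phi_g(U(t))$ in $t$ by the chain rule and recognise the resulting operator as a paradifferential operator of order $n$ plus an $N$-smoothing remainder. Fixing $t$, the map $\tau \mapsto \Phi_g^\tau(U(t))$ satisfies the autonomous (in $\tau$) equation \eqref{flusso1para} with $G(U) = \opbw(g(U;x,\xi))$ and $\Phi_g^0 = \uno$. Differentiating this identity in $t$ gives an inhomogeneous linear equation for $\partial_t \Phi_g^\tau(U(t))$ with forcing $\ii\,(\partial_t G(U(t)))\Phi_g^\tau(U(t))$ and vanishing initial datum at $\tau=0$. Solving by Duhamel at $\tau=1$ and using the group property $\Phi_g(U(t))^{-1}\Phi_g^{1-\sigma}(U(t)) = \Phi_g^{-\sigma}(U(t))$ (which holds at fixed $t$ because the ODE is autonomous in $\tau$) yields
\begin{equation*}
\Phi_g(U(t))^{-1}\circ\partial_t\circ\Phi_g(U(t)) = \partial_t + \int_0^1 \Phi_g^{-\sigma}(U(t))\circ \ii\,\partial_t G(U(t))\circ \Phi_g^\sigma(U(t))\,d\sigma.
\end{equation*}

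Next, I would identify the integrand. Since $\partial_t U = X_{NLS}(U)$ with $X_{NLS}$ satisfying \eqref{ipovector}, Lemma \ref{paTsimbopsi} applied to $g\in\Sigma_1^n$ yields that $\partial_t g(U(t);x,\xi)\in\Sigma_1^n$, and this symbol is real-valued because $g$ is. Observing that $\Phi_g^\sigma$ coincides with the time-$1$ flow generated by the real symbol $\sigma g\in\Sigma_1^n$ (by a straightforward uniqueness argument for \eqref{flusso1para}), Lemma \ref{coniugio flusso diag}$(i)$ applies, uniformly in $\sigma\in[0,1]$, and gives
\begin{equation*}
\Phi_g^{-\sigma}(U(t))\,\opbw(\partial_t g)\,\Phi_g^\sigma(U(t)) = \opbw(\partial_t g) + \opbw(\tilde b_\sigma) + \tilde{\mathcal{R}}_\sigma(U(t)),
\end{equation*}
where $\tilde b_\sigma\in\Gamma_2^{2n-\delta}$ is real-valued (by the last assertion of Lemma \ref{coniugio flusso diag}$(i)$, since $\partial_t g$ is real) and $\tilde{\mathcal{R}}_\sigma\in\mathcal{S}_2(N)$, with bounds uniform in $\sigma$. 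The hypothesis $n<\delta$ implies $2n-\delta<n$, hence $\tilde b_\sigma\in\Sigma_1^n$ as well.

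To conclude, I would set
\begin{equation*}
b(U;x,\xi) := \ii\Big(\partial_t g(U;x,\xi) + \int_0^1 \tilde b_\sigma(U;x,\xi)\,d\sigma\Big), \qquad \mathcal{R}(U) := \ii\int_0^1 \tilde{\mathcal{R}}_\sigma(U)\,d\sigma.
\end{equation*}
By construction $b$ is $\ii$ times a real-valued symbol belonging to $\Sigma_1^n$, hence purely imaginary and in $\Sigma_1^n$, while $\mathcal{R}\in\mathcal{S}_2(N)\subset\mathcal{S}(N)$. Combining with the formula for $\Phi_g^{-1}\circ\partial_t\circ\Phi_g$ above produces the claimed decomposition.

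The only technical point is the uniformity in $\sigma\in[0,1]$ of the conjugation estimate of Lemma \ref{coniugio flusso diag}$(i)$. This is built into the proof of that lemma: the finite Lie expansion together with its integral Taylor remainder is assembled from the flows $\Phi_g^\theta$ with $\theta\in[0,1]$, for which Lemma \ref{lemm:flussoGGG} provides bounds uniform in $\theta$; the argument nowhere uses the specific value $\sigma=1$.
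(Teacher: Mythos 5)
Your proof is correct, but it takes a genuinely different route from the paper's. The paper attacks the conjugation of $\partial_t$ directly by a finite Lie expansion: after observing $[\partial_t,\opbw(\ii g)]=\opbw(\ii\partial_t g)$, it writes $\Phi_g^{-1}\partial_t\Phi_g$ as $\partial_t+\opbw(\ii\partial_t g)+\sum_{k=2}^L\frac{(-1)^{k-1}}{k!}\mathrm{Ad}^{k-1}_{\opbw(\ii g)}[\opbw(\ii\partial_t g)]$ plus an integral Taylor remainder, then estimates the iterated commutators order by order exactly as in the proof of Lemma \ref{coniugio flusso diag}. You instead derive the \emph{exact} representation $\Phi_g^{-1}(\partial_t\Phi_g)=\int_0^1\Phi_g^{-\sigma}\circ\ii\opbw(\partial_t g)\circ\Phi_g^\sigma\,d\sigma$ via Duhamel and the group property, recognize $\Phi_g^\sigma=\Phi_{\sigma g}^1$, and then invoke Lemma \ref{coniugio flusso diag}$(i)$ as a black box at each $\sigma$. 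Your route buys economy: it avoids redoing the commutator order-counting and makes the ``purely imaginary'' assertion transparent, since $b=\ii(\partial_t g+\int_0^1\tilde b_\sigma\,d\sigma)$ with both pieces real by Lemma \ref{paTsimbopsi} and the real-valuedness clause of Lemma \ref{coniugio flusso diag}$(i)$; the paper's route is more self-contained and mirrors the proof of Lemma \ref{coniugio flusso diag} without having to worry about $\sigma$-uniformity. The only point you rightly flag — uniformity in $\sigma\in[0,1]$ of the conjugation estimates so that the $\sigma$-integrals stay in $\Gamma_2^{2n-\delta}$ and $\mathcal{S}_2(N)$ — does indeed hold, since the seminorms of $\sigma g$ are monotone in $\sigma$ and Lemma \ref{lemm:flussoGGG} already gives flow bounds uniform in the flow time.
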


\begin{proof}
Fix $L\geq 3$. By classical Lie expansions we obtain
\begin{align*}
 \Phi{g}(U)^{-1} \pa_{t}&\Phi_{g}(U)=\pa_{t}+  \opbw(\ii \pa_{t}g) 
+ \sum_{k=2}^{L}\frac{(-1)^{k-1}}{k!}{\rm Ad}^{k-1}_{\opbw(\ii g)}[\opbw(\ii\pa_{t}g)]\nonumber
\\&
+\frac{(-1)^{L}}{L!}\int_{0}^{1} (1-\theta)^{L} \Phi^{-\theta}(U)
\big({\rm Ad}^{L}_{\opbw(\ii g)}[\opbw(\ii \pa_{t}g)] \big)\Phi^{\theta}(U) d \theta\,,
\end{align*}
where we used that $[\pa_{t}, \opbw(\ii g)]=\opbw(\ii \pa_{t}g)$.
By Lemma \ref{paTsimbopsi} we have that $\pa_{t}g$ is a symbol in $\Sigma_{1}^{n}$ with estimates uniform in 
$t\in [0,T]$. The one concludes arguing as done in Lemma \ref{coniugio flusso diag}.
\end{proof}

\vspace{0.5em}
\noindent
In our procedure, we also need to consider the maps of the form 
$\Phi_{\psi}(U):=\Phi_{\psi}^{1}(U)$,
$\Phi_{\mathcal{F}}(U):=\Phi_{\mathcal{F}}^{1}(U)$
where
$\Phi_{\psi}^{\tau}(U)$, $\Phi_{\mathcal{F}}^{\tau}(U)$, $\tau\in[0,1]$ are given by
 \begin{equation}\label{off diag astratto}
\pa_{\tau}\Phi^{\tau}_{\psi}(U)=\ii\opbw\begin{pmatrix}
0 &\psi(U; x, \xi) \\
-\overline{\psi(U; x, -\xi)} & 0
\end{pmatrix} \Phi^{\tau}_{\psi}(U)\,,\quad \Phi^{0}_{\psi}(U)=\uno\,,\quad
\psi \in \Sigma_1^{- n}, \quad n \in \N
 \end{equation}
 \begin{equation}\label{flusso smoothing}
\partial_\tau \Phi^\tau_{\cal F}(U) = {\cal F}(U) \Phi^\tau_{\cal F}(U), \quad \Phi^0_{\cal F}(U) = \uno\,,
\qquad \mathcal{F}(U)\in {\cal O}{\cal S}_1(N)\,.
\end{equation}
We only state the conjugacy properties with the flows $\Phi^\tau_\psi(U)$ and $\Phi_{\cal F}(U)$. The proofs can be done arguing as in Lemmata \ref{lemm:flussoGGG}-\ref{coniugio partial t flusso diag}, with the obvious modifications.

\begin{lemma}\label{well-posflusso2}
 There are $s_0>d/2$ and $r>0$ such that, for any $U=\vect{u}{\bar{u}}$ with
 $u\in H^{s}(\mathbb{T}^{d};\mathbb{C})\cap B_{s_0}(r)$, for any $s>s_0$
 the problems \eqref{off diag astratto} and \eqref{flusso smoothing} admit unique solutions $\Phi^{\tau}_{\psi}(U)$,
 $\Phi^{\tau}_{\mathcal{F}}(U)$
 satisfying
\begin{equation*}
\begin{aligned}
&\| \Phi^\tau_{\psi}(U) \|_{{\cal L}(H^s)} \leq 1 + C(s) \| u \|_{s_0}\,, 
\quad \| \Phi^\tau_{\psi}(U) - {\rm Id} \|_{{\cal L}(H^s, H^{s + n})} \lesssim_s \| u \|_{s_0}
\\&\| \Phi^\tau_{\cal F}(U) \|_{{\cal L}(H^s)} \leq 1 + C(s) \| u \|_{s_0}\,, 
\quad \| \Phi^\tau_{\cal F}(U) - {\rm Id} \|_{{\cal L}(H^s, H^{s + N})} \lesssim_s \| u \|_{s_0}\,, \quad \forall s \geq s_0\,,
\end{aligned}
\end{equation*}
uniformly in $\tau\in[-1,1]$.
Moreover the maps 
$\Phi^{\tau}_{\psi}(U)$,
 $\Phi^{\tau}_{\mathcal{F}}(U)$
are symplectic according to Definition \ref{mappasimpl}.
\end{lemma}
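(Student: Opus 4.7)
The plan is to follow the strategy of Lemma \ref{lemm:flussoGGG}, but with substantial simplifications since both generators are now bounded—indeed smoothing—operators on $H^s$, so no genuine energy argument is needed. For \eqref{off diag astratto}, since $\psi \in \Sigma_1^{-n}$ with $n\in\N$, the matrix-valued operator
\[
A_\psi(U):=\opbw\begin{pmatrix} 0 & \psi(U;x,\xi) \\ -\overline{\psi(U;x,-\xi)} & 0 \end{pmatrix}
\]
maps $H^s\times H^s$ continuously into itself by Lemma \ref{standard cont bony}, with operator norm controlled by $C(s)\|u\|_{s_0}$ (using Remark \ref{stime simboli in omogeneita}). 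Likewise, by Definition \ref{smoothing lineare u}, $\mathcal{F}(U)\in{\cal O}{\cal S}_1(N)$ is bounded on $H^s$ with norm $\lesssim_s\|u\|_{s_0}$. In both cases \eqref{off diag astratto} and \eqref{flusso smoothing} are linear ODEs in a Banach space with a bounded generator, so standard Cauchy--Lipschitz theory in $H^s\times H^s$ produces unique flows $\Phi^\tau_\psi(U)$, $\Phi^\tau_{\cal F}(U)$ for $\tau\in[-1,1]$; Gronwall then yields the first bounds $\|\Phi^\tau_{\psi}(U)\|_{\mathcal{L}(H^s)},\|\Phi^\tau_{\mathcal{F}}(U)\|_{\mathcal{L}(H^s)}\leq 1+C(s)\|u\|_{s_0}$.

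To get the regularity-improving bounds I would apply the fundamental theorem of calculus,
\[
\Phi^\tau_\psi(U) - \uno = \int_0^\tau \ii A_\psi(U)\,\Phi^\sigma_\psi(U)\, d\sigma,
\]
and combine the fact that $A_\psi(U)\colon H^s\to H^{s+n}$ has norm $\lesssim \|u\|_{s_0}$ with the already-established boundedness of $\Phi^\sigma_\psi(U)$ on $H^s$. This gives $\|\Phi^\tau_\psi(U)-\uno\|_{\mathcal{L}(H^s,H^{s+n})}\lesssim_s\|u\|_{s_0}$. The same scheme with $\mathcal{F}(U)\colon H^s\to H^{s+N}$ produces the analogous estimate for $\Phi^\tau_{\mathcal F}$.

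For symplecticness I would differentiate $\tau\mapsto \Phi^{\tau*}_\psi(-\ii E)\Phi^\tau_\psi$ to obtain
\[
\partial_\tau\bigl[\Phi^{\tau*}_\psi(-\ii E)\Phi^\tau_\psi\bigr] = \Phi^{\tau*}_\psi\bigl[EA_\psi(U) - A_\psi(U)^*E\bigr]\Phi^\tau_\psi,
\]
and verify that the bracket vanishes. Using the adjoint formula for matrix-valued Bony--Weyl operators, $EA_\psi$ has symbol $\bigl(\begin{smallmatrix} 0 & \psi\\ \overline{\psi(x,-\xi)} & 0\end{smallmatrix}\bigr)$, which by \eqref{simboAggiunto2} generates a self-adjoint operator precisely thanks to the structural symmetries of $\psi$ (in particular its parity in $\xi$) built into its construction in the paradifferential normal form of Section \ref{sez forma normale paradiff}. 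Since the identity holds at $\tau=0$, it persists on $[-1,1]$. The argument for $\Phi^\tau_{\mathcal F}$ is identical once one uses the Hamiltonian structure of $\mathcal F(U)$ encoded by its matrix form \eqref{barrato4} and Definition \ref{selfi}. The only point that is not entirely mechanical is verifying the Hamiltonian/symmetry properties of $\psi$ and $\mathcal F$; these will have been arranged by the construction that produces them in Sections \ref{sez forma normale paradiff} and \ref{sec:BNFstep}, and so can be taken for granted at this level of abstraction.
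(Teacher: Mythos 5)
Your proof is essentially correct and supplies the details the paper compresses into ``standard theory of ODEs in Banach space'': bounded generator $\Rightarrow$ Picard existence/uniqueness and Gronwall bound, Duhamel formula for the near-identity regularity gain, and differentiation of $\tau\mapsto\Phi^{\tau*}(-\ii E)\Phi^{\tau}$ for symplecticity. Two remarks. First, Gronwall gives $e^{C(s)\|u\|_{s_0}|\tau|}$; passing to $1+C(s)\|u\|_{s_0}$ uses $|\tau|\le 1$ together with the built-in smallness $\|u\|_{s_0}\le r<1$, which you should state. Second, you are right that the bracket $EA_\psi(U)-A_\psi(U)^*E$ does not vanish for an arbitrary $\psi\in\Sigma_1^{-n}$: by \eqref{simboAggiunto2} it vanishes exactly when $\psi(U;x,\xi)=\psi(U;x,-\xi)$, equivalently when $\ii A_\psi(U)$ is Hamiltonian in the sense of Definition \ref{selfi}, and likewise one needs $\mathcal{F}(U)$ Hamiltonian. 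This hypothesis is left implicit in the lemma but is verified by the specific $\psi_n$ and $\mathcal{F}$ constructed later (for instance $\psi_n=b_n/(2\Lambda(\xi))$ in the proof of Proposition \ref{prop off diagonal}, with $b_n$ and $\Lambda$ even in $\xi$). Flagging it explicitly, as you do, is a sharpening of the statement rather than a gap in your argument.
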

\begin{proof}
It follows by standard theory of ODEs in Banach space.
\end{proof}

We set $\Phi_{\psi}(U) :=\Phi_{\psi}^1(U)$
and $\Phi_{\cal F}(U) :=\Phi_{\cal F}^1(U)$.  Their inverse are  given by 
$\Phi_{\psi}(U)^{- 1} = \Phi^\tau_{\psi}(U)_{| \tau = - 1}$
and $\Phi_{\cal F}(U)^{- 1} = \Phi^\tau_{\cal F}(U)_{| \tau = - 1}$. 
The following Lemmata can be deduce 
by reasoning exactly as done in Lemmata \ref{coniugio flusso diag}, \ref{paTsimbopsi}
and \ref{coniugio partial t flusso diag}. Hence we omit their proofs.

\begin{lemma}\label{proprieta coniugio off diagonal}

\noindent
$(i)$ Let $A$ be a matrix valued symbol in $\Sigma^m_1$ 
as in the definition \ref{def op simb matrici} and 
let $\Phi_{\psi}(U)$ as in \eqref{off diag astratto}. 
Then for any $N \in \N$, 
\[
\Phi_{\psi}(U)^{- 1} {\rm Op}^{bw}(A) \Phi_{\psi}(U) = {\rm Op}^{bw}(A) + {\rm Op}^{bw}(B) + {\cal R}(U)
\]
where $B \in \Gamma_2^{m - n}$ and the ${\cal R}(U)$ belongs to the class ${\cal S}_2( N)$. 
If the matrix of symbols $A$ satisfies the conditions \eqref{simboAggiunto2}, then 
the matrix $B$ satisfies \eqref{simboAggiunto2} as well.

\noindent
$(ii)$ One has that
\[
\begin{aligned}
\Phi_{\psi}(U)^{- 1} \ii E {\rm Op}^{bw}(\Lambda) \Phi_{\psi}(U)&  =  
\ii E {\rm Op}^{bw}(\Lambda) + \opbw\begin{pmatrix}
0 &- 2 \Lambda(\xi) \psi(U; x, \xi)
\\
2 \Lambda(- \xi) \overline{\psi(U; x, - \xi)} & 0
\end{pmatrix}  
\\& 
\quad + {\rm Op}^{bw}(B) + {\cal R}(U)
\end{aligned}
\]
where $B$ is a matrix  in $\Sigma_1^{1 - n}$ satisfying \eqref{simboAggiunto2}
and ${\cal R}(U)$ is in the class ${\cal S}(N)$. 

\noindent
$(iii)$ Assume that $U\in C^{0}([0,T];{\bf H}^{s})$ for some $T>0$, $s\gg1$,
solves 
 $\partial_t U(t) = X_{NLS}(U(t))$ where $X_{NLS}$ satisfies \eqref{ipovector}. 
 Then for any $N \in \N$
\[
\Phi_{\psi}(U(t))^{- 1} \circ \partial_t \circ \Phi_{\psi}(U(t)) = 
\partial_t + {\rm Op}^{bw}(B(U(t); x, \xi))) + {\cal R}(U(t))
\]
where $B(U; x, \xi) \in \Sigma^{- n}_1$ and the map 
$(U, w) \mapsto {\cal R}(U)[w]$ is in the class ${\cal S}(N)$. 
Moreover the matrix of symbols $-\ii EB$ satisfies the conditions \eqref{simboAggiunto2}.

\noindent
$(iv)$ Let ${\cal R}(U)$ be in the class ${\cal S}(N)$. 
Then $\Phi_{\psi}(U)^{- 1} {\cal R}(U) \Phi_{\psi}(U)$ is in the class ${\cal S}(N)$. 
\end{lemma}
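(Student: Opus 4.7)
The strategy is to parallel Lemmas \ref{coniugio flusso diag}, \ref{paTsimbopsi} and \ref{coniugio partial t flusso diag}, replacing the scalar generator $\opbw(\ii g)$ of \eqref{flusso1para} by the matrix paradifferential generator
\[
\ii\Psi(U):=\ii\,\opbw\!\begin{pmatrix} 0 & \psi(U;x,\xi) \\ -\overline{\psi(U;x,-\xi)} & 0\end{pmatrix},\qquad \psi\in\Sigma^{-n}_1,
\]
and invoking the symbolic calculus of Lemma \ref{teoremadicomposizione} entry-wise on the $2\times 2$ blocks. By construction $\ii\Psi$ is Hamiltonian in the sense of Definition \ref{selfi}, so the flow $\Phi_\psi^\tau(U)$ is symplectic (cf.\ Lemma \ref{well-posflusso2}) with inverse $\Phi_\psi^{-\tau}(U)$; this is the structural fact used below to check preservation of the self-adjointness conditions \eqref{simboAggiunto2}.

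For \emph{item (i)}, I would write the truncated Lie expansion
\[
\Phi_\psi(U)^{-1}\opbw(A)\Phi_\psi(U)=\opbw(A)+\sum_{k=1}^{L}\frac{(-1)^k}{k!}\operatorname{Ad}^k_{\ii\Psi}[\opbw(A)]+\mathcal{Q}_{L+1}(U),
\]
with $\mathcal{Q}_{L+1}$ the integral Taylor remainder and $L$ chosen large. By the matrix-valued version of Lemma \ref{teoremadicomposizione}, each iterated bracket equals $\opbw(C_k)+\mathcal{R}_k(U)$ with $C_k\in\Gamma_2^{m-kn}$ (the homogeneity is $\geq k+1\geq 2$ since $\Psi$ vanishes at $U=0$) and $\mathcal{R}_k\in\mathcal{S}_2(N)$. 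Summing $k=1,\dots,L$ yields the claimed $\opbw(B)$ with $B\in\Gamma^{m-n}_2$; choosing $L$ so that $m-(L+1)n\leq -N$ forces the $(L+1)$-fold bracket to be $N$-smoothing, and by item (iv) its conjugation under $\Phi_\psi^\theta$ stays in $\mathcal{S}_2(N)$, so $\mathcal{Q}_{L+1}\in\mathcal{S}_2(N)$. Preservation of \eqref{simboAggiunto2} follows because the Moyal product and commutator respect the matrix symmetry, as one reads off from \eqref{espansione2}.

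For \emph{item (ii)}, the same expansion with $A$ replaced by $\Lambda\mathbf{1}$ produces a nontrivial principal commutator. Since $\ii E\opbw(\Lambda)$ is diagonal with symbol $\pm\ii\Lambda(\pm\xi)$ (and $\Lambda(\xi)=\Lambda(-\xi)$ by \eqref{def Lambda xi}) while $\ii\Psi$ is purely off-diagonal, a direct $2\times 2$ matrix computation combined with Lemma \ref{teoremadicomposizione}(ii) gives
\[
[\,\ii E\opbw(\Lambda),\,\ii\Psi\,]=\opbw\!\begin{pmatrix} 0 & -2\Lambda(\xi)\psi(U;x,\xi) \\ 2\Lambda(-\xi)\overline{\psi(U;x,-\xi)} & 0\end{pmatrix}+ \opbw(r)+\mathcal{R}(U),
\]
where $r$ is a symbol of order $2-n-2\delta\leq 1-n$ arising from the Poisson bracket $\{\Lambda,\psi\}$ together with higher Moyal corrections; iterated brackets gain further powers of $\delta$ and are absorbed into the announced $\opbw(B)+\mathcal{R}(U)$ with $B\in\Sigma^{1-n}_1$, and the symmetry conditions on $B$ are again verified by inspection of the expansion formula.

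\emph{Item (iii)} follows by repeating the argument of Lemma \ref{coniugio partial t flusso diag} verbatim with $\opbw(\ii g)$ replaced by $\ii\Psi$: starting from $[\partial_t,\ii\Psi]=\ii\,\partial_t\Psi$ and invoking Lemma \ref{paTsimbopsi} to get $\partial_t\psi\in\Sigma_1^{-n}$ with estimates uniform on $[0,T]$, the Lie expansion yields a principal symbol $B\in\Sigma_1^{-n}$ which is off-diagonal, so that $-\ii EB$ automatically satisfies \eqref{simboAggiunto2}, plus a remainder in $\mathcal{S}(N)$. Finally, \emph{item (iv)} is a direct consequence of the uniform $H^s$-bounds on $\Phi_\psi^{\pm 1}$ from Lemma \ref{well-posflusso2} combined with the tame estimates \eqref{marlene1}--\eqref{marlene2} on $\mathcal{R}(U)$, exactly as in the proof of item (iii) of Lemma \ref{coniugio flusso diag}. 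The main obstacle throughout is the careful bookkeeping needed to verify that the off-diagonal/self-adjoint matrix structure encoded in \eqref{simboAggiunto2} is preserved at every step of the Lie expansion; once the matrix-valued version of the symbolic calculus is cleanly in place, the remainder of the proof is mechanical.
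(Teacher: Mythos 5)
Your proposal is correct and matches the paper's intended argument: the paper explicitly omits the proof of this lemma, stating that it follows ``by reasoning exactly as done in Lemmata \ref{coniugio flusso diag}, \ref{paTsimbopsi} and \ref{coniugio partial t flusso diag}, with the obvious modifications,'' and you spell out precisely those modifications (matrix-valued Lie expansion, entry-wise use of Lemma \ref{teoremadicomposizione}, the explicit off-diagonal commutator with $\ii E\opbw(\Lambda)$ for item (ii), and the $\partial_t$-conjugation for item (iii)). The only slight imprecision is your bookkeeping ``$C_k\in\Gamma_2^{m-kn}$'': the gain per iterated bracket is in fact $n+\delta$ (or at least $\delta$ even when $n$ is small), which is what guarantees the $(L+1)$-fold bracket becomes $N$-smoothing for $L$ large; this does not affect the conclusion and in the application $n\geq 2$ anyway.
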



\begin{lemma}\label{coniugazioni flusso smoothing}
Let $N \in \N$, ${\cal F} \in {\cal O}{\cal S}_1(N)$. 
Then the following holds

\noindent
$(i)$ Let $A \in \Sigma_1^m$ be a matrix valued symbol. 
Then for any $\tau \in [- 1 , 1]$, 
\[
\Phi_{\cal F}(U)^{- 1} {\rm Op}^{bw}(A) \Phi_{\cal F}(U) = {\rm Op}^{bw}(A) + {\cal R}(U)
\]
where ${\cal R}(U) \in {\cal S}_2(N - m)$. 

\noindent
$(ii)$  One has that
\[
\Phi_{\cal F}(U)^{- 1} \ii E {\rm Op}^{bw}(\Lambda) \Phi_{\cal F}(U) 
= \ii E {\rm Op}^{bw}(\Lambda) + [\ii E {\rm Op}^{bw}(\Lambda), {\cal F}(U)] + {\cal R}(U)
\]
for some ${\cal R}(U)$ in the class ${\cal S}_2(N - 2)$. 

\noindent
$(iii)$ Assume that $U \in C^0([0, T], H^{\rho + 2}) \cap C^1([0, T], H^\rho)$ 
solves the equation $\partial_t U = X_{NLS}(U)$ 
for some $\rho \equiv \rho_N \geq N$ large enough
where $X_{NLS}$ satisfies \eqref{ipovector}.
Then 
\[
\Phi_{\cal F}(U)^{- 1} \circ \partial_t  \circ \Phi_{\cal F}(U) = 
\partial_t - {\cal F}(\ii E {\rm Op}^{bw}(\Lambda) U(t)) + {\cal R}(U(t))
\]
where ${\cal R}(U)$ belongs to the class ${\cal S}_2(N)$. 

\noindent
$(iv)$ Let $N' \in \N$, ${\cal R}(U)$ be in the class ${\cal S}(N')$. 
Then 
\[
\Phi_{\cal F}(U)^{- 1} {\cal R}(U) \Phi_{\cal F}(U) = {\cal R}(U) + {\cal Q}(U)
\]
where ${\cal Q}(U)$ is in the class ${\cal S}_2(N + N')$.
\end{lemma}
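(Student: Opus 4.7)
The plan is to prove all four items from a single Lie-type expansion of $\Phi_{\mathcal{F}}(U)^{-1}(\cdot)\Phi_{\mathcal{F}}(U)$, exploiting the two crucial features of $\mathcal{F}(U)\in\mathcal{O}\mathcal{S}_1(N)$: it is $N$-smoothing (Lemma \ref{well-posflusso2} and estimate \eqref{marlene2}) and it is \emph{linear} in $U$. Each commutator with $\mathcal{F}$ thus simultaneously (a) raises the smoothing index by $N$ and (b) adds one power of $U$, so any term in the expansion carrying at least one commutator lies automatically in the class $\mathcal{S}_2$ of operators quadratic or higher in $U$. Since $\mathcal{F}(U)$ is independent of the flow parameter one has $\Phi^{\tau}_{\mathcal{F}}(U)=\exp(\tau\mathcal{F}(U))$, and arguing exactly as in Lemma \ref{coniugio flusso diag} one gets, for every $L\in\mathbb{N}$ and every admissible $A$,
\[
\Phi_{\mathcal{F}}(U)^{-1} A \, \Phi_{\mathcal{F}}(U) = A + \sum_{k=1}^{L}\frac{(-1)^{k}}{k!}\,\mathrm{ad}^{k}_{\mathcal{F}(U)}[A] + \frac{(-1)^{L+1}}{L!}\int_0^1 (1-\theta)^L \Phi^{-\theta}_{\mathcal{F}}(U)\, \mathrm{ad}^{L+1}_{\mathcal{F}(U)}[A]\, \Phi^{\theta}_{\mathcal{F}}(U)\, d\theta,
\]
with $\mathrm{ad}_{\mathcal{F}(U)}[A]:=[\mathcal{F}(U),A]$. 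The integer $L$ is chosen large enough as a function of the required smoothing order.

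For item (i), the tame bound \eqref{marlene2} and Lemma \ref{standard cont bony} show that $\mathcal{F}(U)\opbw(A)$ and $\opbw(A)\mathcal{F}(U)$ each map $H^s\to H^{s+N-m}$ with norm $\lesssim \|U\|_\rho$, so $\mathrm{ad}_{\mathcal{F}(U)}[\opbw(A)]\in\mathcal{S}_2(N-m)$; higher commutators are even more smoothing, and the integral remainder is controlled via the flow estimates of Lemma \ref{well-posflusso2}. The sum of all $k\geq1$ terms thus defines $\mathcal{R}(U)\in\mathcal{S}_2(N-m)$. Item (iv) is identical with $A=\mathcal{R}(U)\in\mathcal{S}(N')$: every $k\geq1$ term carries an extra factor of $\mathcal{F}$, contributing an $N$-smoothing gain and one extra power of $U$, so the tail lies in $\mathcal{S}_2(N+N')$. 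Item (ii) is the same expansion with $A=\ii E\,\opbw(\Lambda)$: the $k=1$ term $(-1)^1[\mathcal{F}(U),\ii E\,\opbw(\Lambda)]=[\ii E\,\opbw(\Lambda),\mathcal{F}(U)]$ is retained explicitly, while for $k\geq2$ one loses at most $2$ derivatives from $\Lambda$ but gains $kN$ from the $k$ factors of $\mathcal{F}$, so the tail lies in $\mathcal{S}_2(N-2)$ once $L$ is large enough.

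Item (iii) needs a genuinely new step. Differentiating $\Phi_{\mathcal{F}}(U(t))^{-1}\Phi_{\mathcal{F}}(U(t))=\mathrm{Id}$ yields
\[
\Phi_{\mathcal{F}}(U(t))^{-1}\circ\partial_t\circ\Phi_{\mathcal{F}}(U(t)) = \partial_t + \Phi_{\mathcal{F}}(U(t))^{-1}\bigl(\partial_t\Phi_{\mathcal{F}}(U(t))\bigr).
\]
Because $\mathcal{F}(U)$ is linear in $U$, the chain rule gives $\partial_t\mathcal{F}(U(t))=\mathcal{F}(\partial_t U(t))=\mathcal{F}(X_{NLS}(U(t)))$, and a Duhamel expansion of $\partial_t\exp(\mathcal{F}(U(t)))$ composed with $\Phi_{\mathcal{F}}^{-1}$ produces $\mathcal{F}(X_{NLS}(U(t)))$ at leading order, with remainder a sum of iterated commutators $\mathrm{ad}^{k}_{\mathcal{F}(U)}[\mathcal{F}(X_{NLS}(U))]$, $k\geq1$, each carrying an extra copy of $\mathcal{F}$. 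I would then split $X_{NLS}(U)=-\ii E\,\opbw(\Lambda)U+\mathcal{N}(U)$, where $\mathcal{N}(U)$ is the quadratic remaining part of the right-hand side of \eqref{main}. The linear piece yields exactly $-\mathcal{F}(\ii E\,\opbw(\Lambda)U(t))$; the nonlinear piece $\mathcal{F}(\mathcal{N}(U))$, by bilinearity of $\mathcal{F}\in\mathcal{O}\mathcal{S}_1(N)$ combined with \eqref{ipovector}, is at least cubic in $U$ and $N$-smoothing, so it lies in $\mathcal{S}_2(N)$. The remaining Duhamel tail contains two or more factors of $\mathcal{F}$ and is likewise in $\mathcal{S}_2(N)$.

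The main obstacle is item (iii): one has to isolate $-\mathcal{F}(\ii E\,\opbw(\Lambda)U(t))$ as the only contribution that falls outside $\mathcal{S}_2(N)$, while checking that every other term produced by differentiating the flow in time (both the nonlinear part of $X_{NLS}$ absorbed by $\mathcal{F}$ and the higher-order Duhamel commutators) can be placed in $\mathcal{S}_2(N)$ using \eqref{marlene2} and \eqref{ipovector}. The linearity of $\mathcal{F}$ in $U$, which converts $\partial_t\mathcal{F}(U)$ into $\mathcal{F}(\partial_t U)$, is essential here, and the regularity hypothesis $U\in C^0([0,T];H^{\rho+2})\cap C^1([0,T];H^\rho)$ is tuned exactly so that $\mathcal{F}(X_{NLS}(U))$ makes sense in the required classes. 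Items (i), (ii) and (iv) reduce to careful bookkeeping of the smoothing index in the same Lie expansion.
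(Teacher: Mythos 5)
Your proposal is correct and follows exactly the route the paper indicates: the authors omit the proof of this lemma, stating that it goes "arguing as in Lemmata \ref{lemm:flussoGGG}--\ref{coniugio partial t flusso diag}, with the obvious modifications," i.e.\ a Lie expansion of the conjugation combined with smoothing-index bookkeeping, which is precisely what you carry out. Your key observation — that each commutator with $\mathcal{F}\in\mathcal{O}\mathcal{S}_1(N)$ simultaneously gains $N$ orders of smoothing and one power of $U$, forcing every $k\geq 1$ term (except the retained $k=1$ term in items (ii)--(iii)) into $\mathcal{S}_2$ — is the heart of the "obvious modifications," and your treatment of item (iii) via $\partial_t\mathcal{F}(U)=\mathcal{F}(\partial_t U)=\mathcal{F}(X_{NLS}(U))$ and the splitting $X_{NLS}=-\ii E\opbw(\Lambda)U+\mathcal{N}(U)$ correctly isolates the linear-in-$U$ contribution $-\mathcal{F}(\ii E\opbw(\Lambda)U)$ and places $\mathcal{F}(\mathcal{N}(U))$ in $\mathcal{S}_2(N)$.
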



\subsection{Some calculus about smoothing operators}
In this section we proof some abstract results on linear smoothing operators
introduced in Definition \ref{smoothing lineare u}. These results will be used
in Section \ref{sec:BNFstep} and they are based on the estimates on the small divisors
proved in Appendix \ref{sezione non rionanza}.

\begin{lemma}\label{stime eq omologica birkhoff}
Let $\mathcal{G}\in (0,+\infty)$ be the full Lebesgue measure set
given by Lemma \ref{condizioni di non-risonanza}. 
Then for any $m\in \mathcal{G}$ 
the following holds.
Let 
\[
{\cal R}(u) w = \sum_{k, \xi \in \Z^d} r(k, \xi) \widehat u(k - \xi) \widehat w(\xi) e^{\ii k \cdot x}
\] 
be in the class ${\cal O}{\cal S}_1(N)$. 
Define for any $\sigma, \sigma' \in \{ + 1, - 1\}$, the operator 
${\cal F}_{\sigma, \sigma'}(u)$ as 
\begin{equation}\label{def cal F sigma sigma prime}
{\cal F}_{\sigma, \sigma'}(u)[w] := 
- \sum_{k, \xi \in \Z^d} \dfrac{r(k, \xi)}{\ii \Big(\Lambda(k) + \sigma \Lambda(k - \xi) + \sigma' \Lambda(\xi) \Big)} 
\widehat u(k - \xi) \widehat w(\xi) e^{\ii k \cdot x}\,.
\end{equation}
Then ${\cal F}_{\sigma, \sigma'}(u)$ is in the class ${\cal O}{\cal S}_1(N - \tau)$ 
and solves the equation 
\begin{equation}\label{omologica pastiera 101}
{\rm Op}^{bw}(\Lambda) {\cal F}_{\sigma, \sigma'}(u) 
+ \sigma {\cal F}_{\sigma, \sigma'}({\rm Op}^{bw}(\Lambda) u ) 
+ \sigma' {\cal F}_{\sigma, \sigma'}(u) {\rm Op}^{bw}(\Lambda) + {\cal R}(u) = 0\,.
\end{equation}
\end{lemma}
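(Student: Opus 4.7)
The argument splits into an algebraic verification of \eqref{omologica pastiera 101} and a quantitative estimate showing $\mathcal{F}_{\sigma,\sigma'}(u) \in {\cal O}{\cal S}_1(N-\tau)$.

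For the algebraic step, I recall from Remark \ref{rmk:simboloLambda} that $\opbw(\Lambda)$ is the Fourier multiplier with symbol $\Lambda(\xi)$ in \eqref{def Lambda xi}, so that $\opbw(\Lambda) e^{\ii j \cdot x} = \Lambda(j) e^{\ii j \cdot x}$ for every $j \in \Z^d$. Substituting the Fourier series defining $\mathcal{F}_{\sigma,\sigma'}(u)$ into each of the three operator terms on the left-hand side of \eqref{omologica pastiera 101} produces a scalar factor in front of $r(k,\xi)\widehat{u}(k-\xi)\widehat{w}(\xi) e^{\ii k \cdot x}$: composition with $\opbw(\Lambda)$ on the left contributes $\Lambda(k)$ (from acting on $e^{\ii k \cdot x}$); replacement of $u$ by $\opbw(\Lambda) u$ in the second term contributes $\sigma \Lambda(k-\xi)$, since $\opbw(\Lambda) u$ has $(k-\xi)$-th Fourier coefficient $\Lambda(k-\xi)\widehat{u}(k-\xi)$; and composition with $\opbw(\Lambda)$ on the right in the third term contributes $\sigma'\Lambda(\xi)$. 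The three factors combine precisely into $\Lambda(k)+\sigma\Lambda(k-\xi)+\sigma'\Lambda(\xi)$, which cancels the denominator of $\mathcal{F}_{\sigma,\sigma'}$ and reproduces $-\mathcal{R}(u)[w]$, establishing \eqref{omologica pastiera 101}.

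For the quantitative step, I would invoke the non-resonance estimate \ref{condizioni di non-risonanza} from Appendix \ref{sezione non rionanza}: for $m \in \mathcal{G}$ there exist $\gamma>0$ and $\tau>0$ such that the three-wave divisor $\Lambda(k)+\sigma\Lambda(k-\xi)+\sigma'\Lambda(\xi)$ admits a lower bound of the form $\gamma\langle n_3\rangle^{-\tau}$, where $n_3$ denotes the relevant (typically smallest or intermediate) of the three frequencies $k, k-\xi, \xi$. Dividing $r(k,\xi)$ by this divisor inflates the kernel by at most a factor $\langle \cdot \rangle^{\tau}$, so the new kernel $-r(k,\xi)/[\ii(\Lambda(k)+\sigma\Lambda(k-\xi)+\sigma'\Lambda(\xi))]$ is again of the form \eqref{smoothing bilineare}. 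By extracting pointwise decay of $r(k,\xi)$ from the tame bound \eqref{marlene2} on $\mathcal{R}(u)$ (for instance testing $\mathcal{R}$ against the exponentials $e^{\ii(k-\xi)\cdot x}$ and $e^{\ii \xi \cdot x}$ and picking out the $k$-th Fourier coefficient at arbitrarily high Sobolev index) and combining with the small divisor lower bound, one verifies the tame estimate of Definition \ref{smoothing lineare u} with smoothing index reduced to $N-\tau$ and a correspondingly enlarged $\rho \equiv \rho_{N-\tau}$.

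The main obstacle is the small divisor analysis on the irrational torus: the three-wave moduli $\Lambda(k)+\sigma\Lambda(k-\xi)+\sigma'\Lambda(\xi)$ accumulate to zero, and a polynomial lower bound holds only for $m$ in a full-measure set. One must also isolate the degenerate cases where one of the momenta vanishes (e.g.\ $k-\xi=0$, reducing to a two-wave interaction), in which the positivity of the mass $\Lambda(0)=m>0$ provides the required bound; this is precisely why the statement is restricted to $m \in \mathcal{G}$. Once a uniform polynomial lower bound on $|d(k,\xi)|$ is in hand with loss exponent $\tau$, the remaining verification that the rescaled kernel defines an operator in ${\cal O}{\cal S}_1(N-\tau)$ reduces to bookkeeping within the classes of Definition \ref{smoothing lineare u}.
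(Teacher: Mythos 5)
Your proposal is correct and follows essentially the same route as the paper: verify the homological equation by a Fourier-coefficient computation, extract pointwise decay of the kernel $r(k,\xi)$ by testing $\mathcal{R}(u)$ against exponentials, and combine with the small-divisor lower bound of Lemma \ref{condizioni di non-risonanza} to conclude membership in ${\cal O}{\cal S}_1(N-\tau)$. One small imprecision: Lemma \ref{condizioni di non-risonanza} gives a lower bound of the form $\gamma\,\langle k-\xi\rangle^{-\tau}\langle\xi\rangle^{-\tau}$ (a product of two frequency weights), not a single $\langle n_3\rangle^{-\tau}$; this does not change the conclusion, since the $\langle k-\xi\rangle^{\tau}$ factor is absorbed by enlarging $\rho$ (as you anticipate) and only the $\langle\xi\rangle^{\tau}$ factor costs $\tau$ in the smoothing index.
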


\begin{proof}
We prove the claimed statement in the case where $\sigma = \sigma' = - 1$. 
The other cases can be proved similarly. 
It is immediate to verify that ${\cal F}_{\sigma, \sigma'}$ 
defined in \eqref{def cal F sigma sigma prime} solves the equation \eqref{omologica pastiera 101}. 
Since ${\cal R}(u)$ is in the class ${\cal O}{\cal S}_1(N)$, 
one has that there exists $\rho \equiv \rho_N > N$ 
large enough such that for any $s \geq \rho$, 
\[
 \sum_{k \in \Z^d} \langle k \rangle^{2(s + N)} \Big| \sum_{\xi \in \Z^d} 
 r(k, \xi) \widehat u(k - \xi) \widehat w(\xi) \Big|^2 
 = \| {\cal R}(u) w \|_{s + N}^2 \lesssim_s \| u \|_\rho^2 \| w \|_s^2\,.
\]
By taking $w (x) = e^{\ii x \cdot \xi}$, 
the latter estimate implies 
\begin{equation}\label{pastiera al cioccolato 100}
\sum_{k \in \Z^d} \langle k \rangle^{2 (s + N)} |r(k, \xi)|^2 |\widehat u(k - \xi)|^2  
\lesssim_s \| u \|_\rho^2 \langle \xi \rangle^{2 s}\,, 
\quad \forall s \geq \rho\,, \quad \forall u \in H^\rho\,.  
\end{equation}
Let ${\cal F}(u) := {\cal F}_{- 1, - 1}(u)$ (see \eqref{def cal F sigma sigma prime}). 
One has that 
\[
\begin{aligned}
 {\cal F}(u)[w] &= \sum_{k, \xi \in \Z^d} f(k, \xi) \widehat u(k - \xi) \widehat w(\xi) e^{\ii k \cdot \xi}\,, 
\\
f(k, \xi) &:= - \dfrac{r(k, \xi)}{\ii \Big( \Lambda(k) - \Lambda(k - \xi) - \Lambda(\xi) \Big)}\,, 
\qquad k, \xi \in \Z^d\,. 
\end{aligned}
\]
By the bounds \eqref{lowerstima} given 
by Lemma \ref{condizioni di non-risonanza},
one has that there exists $\tau = \tau(d) \gg 0$, large enough and 
$\gamma \in (0, 1)$ small enough such that 
\[
|\Lambda(k) - \Lambda(k - \xi) - \Lambda(\xi) | 
\geq \frac{\gamma}{\langle k - \xi \rangle^\tau \langle \xi \rangle^\tau}, \quad \forall k, \xi \in \Z^d\,,
\]
and therefore 
\begin{equation}\label{f k xi r k xi}
|f(k, \xi)| \lesssim \langle k - \xi \rangle^\tau \langle \xi \rangle^\tau |r(k, \xi)|\,. 
\end{equation}
We now estimate the norm $\| {\cal F}(u) w \|_{s - \tau + N }$. 
Take $s - \tau \geq \rho$ in such a way that \eqref{pastiera al cioccolato 100} 
holds with $s - \tau$ instead of $s$. 
Using the Cauchy-Schwartz inequality 
(using that $\sum_\xi \langle k - \xi \rangle^{- 2 s_0} \leq C < \infty$ for $s_0 > d/2$), 
one has 
\[
\begin{aligned}
\| {\cal F}(u) w \|_{s - \tau + N }^2 
& \leq 
\sum_{k \in \Z^d} \langle k \rangle^{2(s - \tau + N )} 
\Big( \sum_{\xi \in \Z^d} |f(k, \xi)| |\widehat u(k - \xi)| |\widehat w(\xi)|  \Big)^2  
\\& 
\stackrel{\eqref{f k xi r k xi}}{\lesssim} 
\sum_{k \in \Z^d} \langle k \rangle^{2(s - \tau  + N )} 
\Big( \sum_{\xi \in \Z^d} |r(k, \xi)| \langle k - \xi \rangle^\tau 
|\widehat u(k - \xi)| \langle \xi \rangle^\tau |\widehat w(\xi)|  \Big)^2 
\\& 
\lesssim \sum_{k, \xi \in \Z^d} \langle k \rangle^{2(s - \tau + N )}   
|r(k, \xi)|^2 \langle k - \xi \rangle^{2 (\tau + s_0)} |\widehat u(k - \xi)|^2 
\langle \xi \rangle^{2 \tau} |\widehat w(\xi)|^2   
\\& 
= \sum_{k , \xi \in \Z^d} \langle k \rangle^{2(s - \tau + N) }  |r(k, \xi)|^2  
|\widehat{ \langle D \rangle^{\tau + s_0} u}(k - \xi)|^2  
|\widehat{\langle D \rangle^\tau w}(\xi)|^2  
\\& 
=\sum_{\xi \in \Z^d} |\widehat{\langle D \rangle^\tau w}(\xi)|^2 
\Big( \sum_{k \in \Z^d} \langle k \rangle^{2(s - \tau + N) }  |r(k, \xi)|^2  
|\widehat{ \langle D \rangle^{\tau + s_0} u}(k - \xi)|^2 \Big) 
\\& 
\stackrel{\eqref{pastiera al cioccolato 100}}{\lesssim_s} 
\| \langle D \rangle^{\tau + s_0} u \|_\rho^2 \sum_{\xi \in \Z^d}
\langle \xi \rangle^{2 (s - \tau)} |\widehat{\langle D \rangle^\tau w}(\xi)|^2 
\lesssim_s \| u \|_{\rho + \tau + s_0}^2 \| w \|_{s}^2\,.
\end{aligned}
\]
The latter chain of inequalities implies that for any $s \geq \rho' := \rho + \tau + s_0$, the map 
\[
H^{\rho'} \to {\cal L}(H^s, H^{s + N - \tau}), \quad u \mapsto {\cal F}(u)
\]
is a bounded linear map. 
This implies that ${\cal F}$ belongs to the class ${\cal O}{\cal S}_1(N - \tau)$. 
\end{proof}
As a consequence of the latter lemma, one gets the following.

\begin{lemma}\label{eq omologica Birkhoff step}
Let $\mathcal{G}\in (0,+\infty)$ be the full Lebesgue measure set
given by Lemma \ref{condizioni di non-risonanza}. 
Then for any $m\in \mathcal{G}$ 
the following holds.
Let ${\cal R} \in {\cal O}{\cal S}_1(N)$ be a matrix valued operator. 
Then there exists a matrix valued operator ${\cal F} \in {\cal O}{\cal S}_1(N - \tau)$ 
which solves the equation 
\begin{equation}\label{lemma astratto omologica birkhoff}
- {\cal F}(\ii E {\rm Op}^{bw}(\Lambda) U) + [\ii E {\rm Op}^{bw}(\Lambda), {\cal F}(U)] 
+ {\cal R}(U) = 0   \,. 
\end{equation}
\end{lemma}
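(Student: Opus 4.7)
The strategy is to reduce the matrix-valued homological equation \eqref{lemma astratto omologica birkhoff} to a collection of scalar homological equations, each handled by Lemma \ref{stime eq omologica birkhoff}.

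First, I would decompose both the datum $\mathcal{R}(U)$ and the unknown $\mathcal{F}(U)$ in the block form of Definition \ref{def op simb matrici},
\[
\mathcal{R}(U) = \begin{pmatrix} R_1(U) & R_2(U) \\ \overline{R_2(U)} & \overline{R_1(U)} \end{pmatrix}, \qquad \mathcal{F}(U) = \begin{pmatrix} F_1(U) & F_2(U) \\ \overline{F_2(U)} & \overline{F_1(U)} \end{pmatrix},
\]
with each entry in ${\cal O}{\cal S}_1(N)$. Following Definition \ref{smoothing lineare u}, I would further split each scalar entry into its $u$-linear and $\bar u$-linear parts, writing $F_j(U) = F_j^+(u) + F_j^-(\bar u)$ and analogously $R_j(U) = R_j^+(u) + R_j^-(\bar u)$.

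Next, I would compute block by block the left-hand side of \eqref{lemma astratto omologica birkhoff}. Since $\ii E \opbw(\Lambda)$ is diagonal with entries $\ii \opbw(\Lambda)$ and $-\ii \opbw(\Lambda)$, the commutator yields $\ii [\opbw(\Lambda), F_1]$ in the $(1,1)$ block but $\ii( \opbw(\Lambda) F_2 + F_2 \opbw(\Lambda))$ in the $(1,2)$ block. Moreover, substituting $U \mapsto \ii E \opbw(\Lambda) U$ inside $\mathcal{F}(U)$ amounts to replacing $u$ by $\ii \opbw(\Lambda) u$ and $\bar u$ by $-\ii \opbw(\Lambda) \bar u$ in each piece $F_j^\pm$. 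Separating the $u$- and $\bar u$-contributions in the two independent blocks $(1,1)$ and $(1,2)$, the matrix equation decouples into four scalar equations of the form
\[
\opbw(\Lambda) F(v) + \sigma F(\opbw(\Lambda) v) + \sigma' F(v) \opbw(\Lambda) - \ii R(v) = 0,
\]
one for each $F \in \{F_1^+, F_1^-, F_2^+, F_2^-\}$, with the pair $(\sigma, \sigma')$ running through all four choices in $\{-1, +1\}^2$: the value $\sigma' = +1$ arises from the off-diagonal blocks (where the commutator acts as an anticommutator with $\ii \opbw(\Lambda)$), while the value $\sigma = +1$ arises from the $\bar u$-linear parts (where the minus sign in $-\ii \opbw(\Lambda)$ flips).

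Finally, I would apply Lemma \ref{stime eq omologica birkhoff}, whose proof treats all four sign combinations in the same way thanks to the full set of three-wave non-resonance bounds provided by Lemma \ref{condizioni di non-risonanza} for $m \in \mathcal{G}$, to produce scalar operators in ${\cal O}{\cal S}_1(N - \tau)$ solving each of the four equations. Reassembling them yields the desired $\mathcal{F}(U) \in {\cal O}{\cal S}_1(N - \tau)$. The required symmetry structure of Definition \ref{def op simb matrici} — namely that the $(2,1)$ and $(2,2)$ entries be the conjugates of the $(1,2)$ and $(1,1)$ entries — is automatic, since taking the complex conjugate of the $(1,1)$, resp.\ $(1,2)$, block equation reproduces the $(2,2)$, resp.\ $(2,1)$, block equation, using that the symbol $\Lambda(\xi)$ is real. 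The only delicate point is the sign book-keeping in the four block/variable combinations; once this is settled, the argument reduces to a direct invocation of the scalar lemma.
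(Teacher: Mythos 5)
Your proposal is correct and follows essentially the same route as the paper's proof: decompose $\mathcal{R}$ and $\mathcal{F}$ into the blocks $F_j^{\pm}$, $R_j^{\pm}$, observe that the diagonal blocks of $[\ii E\,\opbw(\Lambda),\cdot\,]$ give a commutator while the off-diagonal blocks give an anticommutator, track the sign flips coming from $\ii E\opbw(\Lambda)U$ on the $u$- versus $\bar u$-linear parts, and conclude that the equation decouples into the four scalar homological equations corresponding to $(\sigma,\sigma')\in\{\pm1\}^2$, each solved by Lemma~\ref{stime eq omologica birkhoff}. Your sign book-keeping matches the four explicit equations written out in the paper; your added remark about the $(2,1)$ and $(2,2)$ blocks being automatically the conjugates of the $(1,2)$ and $(1,1)$ blocks (by the structure \eqref{barrato4}--\eqref{barrato2} and the reality of $\Lambda$) is correct though left implicit in the paper.
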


\begin{proof}
The operator ${\cal R} \in {\cal O}{\cal S}_1(N)$ has the form 
\begin{equation*}
{\cal R}(U) = 
\begin{pmatrix}
{\cal R}_1(U) & {\cal R}_2(U) \vspace{0.2em}\\
\overline{{\cal R}_2(U)} & \overline{{\cal R}_1(U)}
\end{pmatrix} = 
\begin{pmatrix}
{\cal R}_1^+(u) + {\cal R}_1^-(\overline u) & {\cal R}_2^+(u) + {\cal R}_2^-(\overline u) \vspace{0.2em}\\
\overline{{\cal R}_2^+(u)} + \overline{{\cal R}_2^-(\bar u)} & \overline{{\cal R}_1^+(u)} 
+ \overline{{\cal R}_1^-(\overline u)}
\end{pmatrix}\,.
\end{equation*}
One looks for ${\cal F} \in {\cal O}{\cal S}_1(N - \tau)$ of the same form, namely 
\[
{\cal F}(U) = \begin{pmatrix}
{\cal F}_1^+(u) + {\cal F}_1^-(\overline u) & {\cal F}_2^+(u) + {\cal F}_2^-(\overline u) \vspace{0.2em} \\
\overline{{\cal F}_2^+(u)} 
+ \overline{{\cal F}_2^-(\bar u)} & \overline{{\cal F}_1^+(u)} 
+ \overline{{\cal F}_1^-(\overline u)}
\end{pmatrix}\,.
\]
A direct calculation shows that the equation 
\eqref{lemma astratto omologica birkhoff} is equivalent to 
\[
\begin{aligned}
& \ii \Big( - {\cal F}_1^+({\rm Op}^{bw}(\Lambda) u) 
+  [{\rm Op}^{bw}(\Lambda), {\cal F}_1^+(u)] \Big)  + {\cal R}_1^+(u) = 0,
\\& 
\ii \Big( {\cal F}_1^-({\rm Op}^{bw}(\Lambda) \overline u) 
+ [{\rm Op}^{bw}(\Lambda), {\cal F}_1^-(\overline u)] \Big)+ {\cal R}_1^-(\overline u) = 0\,, 
\\& 
\ii \Big( - {\cal F}_2^+({\rm Op}^{bw}(\Lambda) u) 
+  {\rm Op}^{bw}(\Lambda) {\cal F}_2^+(u) + {\cal F}_2^+(u) {\rm Op}^{bw}(\Lambda) \Big) 
+ {\cal R}_2^+(u) = 0 \,,
\\& 
\ii \Big( {\cal F}_2^-({\rm Op}^{bw}(\Lambda) \overline u)  
+  {\rm Op}^{bw}(\Lambda) {\cal F}_2^-(\overline u) 
+ {\cal F}_2^-(\overline u) {\rm Op}^{bw}(\Lambda) \Big) + {\cal R}_2^-(\overline u) = 0\,.
\end{aligned}
\]
The claimed statement then directly follows by Lemma \ref{stime eq omologica birkhoff}. 
\end{proof}

\section{Paradifferential normal form}\label{sez forma normale paradiff}

\subsection{Paralinearization of the Schr\"odinger equation}
In this section we rewrite the equation \eqref{main} as a paradifferential system. 
From now on we shall assume the following hypothesis: 
\begin{itemize}
\item{\bf Hypothesis on local in time solutions.} There exists $\rho \gg 0$  
large enough and $T > 0$ such that
\begin{equation}\label{ansatz u}
\begin{aligned}
& u \in C^0\big([- T, T], H^\rho \big) \cap C^1([- T, T], H^{\rho - 2})\,, \\
& \sup_{t \in [- T, T]} \| u(t ) \|_\rho + \sup_{t \in [- T, T]} \| \partial_t u(t) \|_{\rho - 2} \lesssim_\rho \e\,,
\end{aligned}
\end{equation}
solves the equation \eqref{main}. 
\end{itemize}
The latter hypothesis is actually guaranteed by the local existence theorem 
proved in \cite{Feola-Iandoli-local-tori}. 
The only small difference is that the standard Laplacian 
on the torus is replaced by the more general elliptic operator \eqref{def Lambda xi}, 
but the proof can be done exactly in the same way, with the obvious small modifications. 

\begin{proposition}{\bf (Paralinearization of NLS).}\label{NLSparapara}
We have that
the equation \eqref{main}
is equivalent to the following system:
\begin{equation}\label{paralinearized equation}
\pa_{t}{U}+\ii E\opbw\big(\Lambda(\x)
\big)U+\mathcal{A}(U)U+
\mathcal{R}(U)U=0\,,\qquad U=\vect{u}{\bar{u}}\,,\;\;\;\uno:=\sm{1}{0}{0}{1}\,,
\end{equation}
where  $E$ is in \eqref{condsimpl}, $\Lambda(\x)$ is in \eqref{def Lambda xi}, 
the operator $\mathcal{A}(U)$ is in 
${\cal O}{\cal B}_\Sigma(1)$ (see Def. \ref{Op paradiff})
and has the form
\begin{equation}\label{def:opAA}
{\cal A}(U) := \ii E \opbw\begin{pmatrix}
a(U;x,\x) & b(U;x,\x) \vspace{0.2em}\\
\ov{b(U;x,-\x)} &  \ov{a(U ;x,-\x)}
\end{pmatrix}\,,
\end{equation}
where
\begin{equation*}
\begin{aligned}
a(U;x,\x)&:=\sum_{j=1}^{d}\Big[\ii (\pa_{\bar{u}u_{x_j}}f-\pa_{\bar{u}_{x_j}u}f)\x_{j}+
\frac{1}{2}\big(-\pa_{x_{j}}(\pa_{\bar{u}_{x_j}u}f)-\pa_{x_{j}}(\pa_{\bar{u}u_{x_j}}f)\big)\Big]+\pa_{u\bar{u}}f\,,
\\
b(U;x,\x)&:=-\sum_{j=1}^{d}\pa_{x_{j}}(\pa_{\bar{u}\bar{u}_{x_j}}f)+\pa_{u\bar{u}}f\,.
\end{aligned}
\end{equation*}
The remainder 
$\mathcal{R}(U)$ is a matrix of smoothing operators in the class $\mathcal{O}{S}_1(N)$.
Finally the operator $\mathcal{A}(U)$ is Hamiltonian  
according to Definition \ref{selfi}.
\end{proposition}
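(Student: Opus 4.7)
The plan is to paralinearize the scalar nonlinearity $Q(u,\bar u)$, match the output to the Bony-Weyl normal form, and then lift to the $2\times 2$ system; the Hamiltonian character of $\mathcal{A}(U)$ will be a direct consequence of the reality of $f$.

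First, I exploit \eqref{derord2} to rewrite $Q$ as a first-order (rather than second-order) nonlinear operator. Differentiating $\partial_{x_j}\bigl((\partial_{\overline{u_{x_j}}} f)(u,\nabla u)\bigr)$ by the chain rule produces terms of the form $(\partial_{y_k}\partial_{\bar y_j} f)\,u_{x_k x_j}$ and $(\partial_{\bar y_k}\partial_{\bar y_j} f)\,\bar u_{x_k x_j}$; by \eqref{derord2}, all such terms with $k\geq 1$ vanish identically, so no genuine second-order structure in $u$ survives. Consequently $Q$ reduces to a polynomial of degree two in $(u,\bar u,\nabla u,\nabla\bar u)$ involving at most first derivatives, which explains why the symbols $a,b$ in the statement are of order one.

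Second, I apply the Bony paralinearization theorem to $Q$ viewed as a smooth function of the independent variables $(u,\bar u,\nabla u,\nabla\bar u)$. Paralinearizing $Q_1:=(\partial_{\bar u}f)(u,\nabla u)$ directly, and paralinearizing each inner factor of $Q_2:=-\sum_j\partial_{x_j}[(\partial_{\overline{u_{x_j}}}f)(u,\nabla u)]$ before applying the outer $\partial_{x_j}$, produces paraproducts of the schematic form $T_c\,u,\,T_c\,\bar u,\,T_c\,u_{x_j},\,T_c\,\bar u_{x_j}$ with coefficients built from second derivatives of $f$ evaluated at $(u,\nabla u)$. I then convert each paraproduct $T_c\,\partial_{x_j}$ into Bony-Weyl form via the identity $T_c\,\partial_{x_j}=\opbw(c\cdot\ii\xi_j)-\tfrac{1}{2}\opbw(\partial_{x_j}c)$ modulo smoothing. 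Collecting contributions, the $\ii\xi_j$-coefficient of $a$ receives $\partial_{\bar u u_{x_j}}f$ from paralinearizing $Q_1$ and the opposite-sign $-\partial_{\overline{u_{x_j}}u}f$ from carrying $\partial_{x_j}$ inside the paraproduct in $Q_2$; the $-\tfrac{1}{2}\partial_{x_j}(\cdots)$ zero-order corrections come from the Weyl symmetrization just displayed; the purely zero-order piece $\partial_{u\bar u}f$ is the direct term $T_{\partial_u Q_1}\,u$. An analogous bookkeeping for the $\bar u$-paraproducts yields $b(U;x,\xi)$ as stated. The remainder $R$ is linear in $U$ because $Q$ is quadratic in $U$, and the tame bilinear bound of Definition \ref{smoothing lineare u}, with arbitrary smoothing index $N$, follows from the standard Bony-type estimates on the high-high resonant part, so $R\in\mathcal{O}\mathcal{S}_1(N)$ for any prescribed $N$ provided $\rho\gg N$.

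Third, I lift to the system by writing $\partial_t u+\ii\Lambda u+\ii Q=0$ together with its complex conjugate; using that $\opbw(\Lambda(\xi))=-\Delta_g+m$ because $\Lambda(\xi)$ is a pure Fourier multiplier, this yields exactly \eqref{paralinearized equation}, with $\mathcal{A}(U)$ and $\mathcal{R}(U)$ of the matrix form \eqref{barrato4bis}, \eqref{barrato4} obtained by complex-conjugation of the top row. To check the Hamiltonian property, by Definition \ref{selfi} and \eqref{simboAggiunto2} it suffices to verify that $a$ is real-valued and that $b(x,-\xi)=b(x,\xi)$. The first holds because, for real $f$, one has $\overline{\partial_{\bar u u_{x_j}}f}=\partial_{u\overline{u_{x_j}}}f$, so that the bracket $\ii(\partial_{\bar u u_{x_j}}f-\partial_{\overline{u_{x_j}}u}f)$ is purely imaginary, making the $\xi_j$-piece real; the lower-order corrections are manifestly real. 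The second is automatic since $b$ does not depend on $\xi$. The main technical obstacle lies in the bookkeeping of the second step: one must verify that the Weyl-symmetrization corrections assemble precisely into the displayed $-\tfrac{1}{2}\partial_{x_j}(\partial_{\overline{u_{x_j}}u}f+\partial_{\bar u u_{x_j}}f)$ contribution to $a$, and that the trilinear remainder genuinely sits in $\mathcal{O}\mathcal{S}_1(N)$ (and not merely in $\Gamma_2^0$).
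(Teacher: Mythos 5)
Your proposal follows essentially the same route as the paper's proof: Bony paralinearization of $Q$ (exploiting \eqref{derord2} to kill the genuine second-order terms), Weyl composition to convert the outer $\partial_{x_j}$ acting on paraproducts into the symbol $\ii\xi_j$ plus the $-\tfrac12\partial_{x_j}(\cdot)$ correction (the paper does this via Proposition \ref{prop:composit} and Lemma \ref{teoremadicomposizione}), and lifting to the system. You additionally spell out the verification of \eqref{simboAggiunto2} for the Hamiltonian property, which the paper leaves as ``an explicit computation''; this is consistent with the intended argument.
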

\begin{proof}
 By paralinearizing the nonlinearity ${\cal Q}$ in \eqref{nonlinQQQ}, using
 the Bony paralinearization formula 
(see  \cite{Metivier}, \cite{Tay-Para}) and recalling the assumption \eqref{derord2}, 
one obtains that
\begin{equation}\label{primaespQQ}
\begin{aligned}
\mathcal{Q}(u,\bar{u})&=\opbw(\pa_{u\bar{u}}f)[u]+\opbw(\pa_{\bar{u}\bar{u}}f)[\bar{u}]
+\sum_{j=1}^{d}\Big(\opbw(\pa_{\bar{u}u_{x_j}}f)[u_{x_{j}}]
+\opbw(\pa_{\bar{u}\bar{u}_{x_j}}f)[\bar{u}_{x_{j}}]\Big)
\\&
-\sum_{j=1}^{d}\pa_{x_j}\Big[
\opbw(\pa_{\bar{u}_{x_j}u}f)[u]
+\opbw(\pa_{\bar{u}\bar{u}_{x_j}}f)[\bar{u}]
\Big]+\mathcal{R}(u,\bar{u})\,,
\end{aligned}
\end{equation}
where the remainder ${\cal R}(u, \bar u)$ is smoothing, namely it satisfies 
\begin{equation}\label{albero1}
\| {\cal R}(u, \bar u)\|_{s + N} \lesssim_{s, N} \| u \|_{\rho} \| u \|_s
\end{equation}
for any $N \geq 1$, for some $\rho= \rho_N > N$ and $s \geq \rho$. 
Recall that $\pa_{x_{j}}=\opbw(\ii\x_{j})$, $j=1,\ldots,d$. Therefore,
using Proposition \ref{prop:composit} and Lemma \ref{teoremadicomposizione}, 
we have
\[
\begin{aligned}
\opbw(\pa_{\bar{u}u_{x_j}}f)&[u_{x_{j}}]-
\pa_{x_j}\opbw(\pa_{\bar{u}_{x_j}u}f)[u]=
\\&
=\opbw(\pa_{\bar{u}u_{x_j}}f)\circ\opbw(\ii \x_{j})[u]
-\opbw(\ii\x_{j})\circ\opbw(\pa_{\bar{u}_{x_j}u}f)[u]
\\&
=\opbw\Big(\ii (\pa_{\bar{u}u_{x_j}}f-\pa_{\bar{u}_{x_j}u}f)\x_{j}\Big)[u]
+\opbw\Big(
\frac{1}{2\ii}\{\pa_{\bar{u}u_{x_j}}f,\ii\x_{j}\}-
\frac{1}{2\ii}\{\ii\x_{j}, \pa_{\bar{u}_{x_j}u}f\}
\Big)[u]
\\&
=\opbw\Big(\ii (\pa_{\bar{u}u_{x_j}}f-\pa_{\bar{u}_{x_j}u}f)\x_{j}\Big)[u]
+\frac{1}{2}\opbw\Big(-\pa_{x_{j}}(\pa_{\bar{u}_{x_j}u}f)-\pa_{x_{j}}(\pa_{\bar{u}u_{x_j}}f)\Big)[u]\,,
\end{aligned}
\]
up to some smoothing remainder satisfying \eqref{albero1}.
Reasoning similarly we deduce 
\[
\begin{aligned}
\opbw(\pa_{\bar{u}\bar{u}_{x_j}}f)[\bar{u}_{x_{j}}]
&-\pa_{x_j}\opbw(\pa_{\bar{u}\bar{u}_{x_j}}f)[\bar{u}]=
\\&
=\opbw(\pa_{\bar{u}\bar{u}_{x_j}}f)\circ\opbw(\ii\x_{j})[\bar{u}]
-\opbw(\ii\x_{j})\circ\opbw(\pa_{\bar{u}\bar{u}_{x_j}}f)[\bar{u}]
\\&
=-\opbw\Big(\pa_{x_{j}}(\pa_{\bar{u}\bar{u}_{x_j}}f)\Big)[\bar{u}]\,,
\end{aligned}
\]
up to some smoothing remainder satisfying \eqref{albero1}.
Therefore, by \eqref{primaespQQ}, we obtained
\[
\begin{aligned}
\mathcal{Q}(u,\bar{u})&=\sum_{j=1}^{d}
\opbw\Big(\ii (\pa_{\bar{u}u_{x_j}}f-\pa_{\bar{u}_{x_j}u}f)\x_{j}\Big)[u]
\\&
+\opbw\Big(\pa_{u\bar{u}}f
-\frac{1}{2}\sum_{j=1}^{d}\big(\pa_{x_{j}}(\pa_{\bar{u}_{x_j}u}f)
+\pa_{x_{j}}(\pa_{\bar{u}u_{x_j}}f)\big)\Big)[u]
\\&
+\opbw\Big(\pa_{\bar{u}\bar{u}}f-\sum_{j=1}^{d}\pa_{x_{j}}(\pa_{\bar{u}\bar{u}_{x_j}}f)\Big)[\bar{u}]
+\mathcal{R}(u,\bar{u})\,,
\end{aligned}
\]
where $\mathcal{R}(u,\bar{u})$ is some remainder satisfying \eqref{albero1}.
By writing \eqref{main} as a system in $U = (u, \bar u)^{T}$, 
one gets the \eqref{paralinearized equation}-\eqref{def:opAA}.
By an explicit computation one can check that 
the operator $\mathcal{A}(U)$ is Hamiltonian. 
\end{proof}

\subsection{Diagonalization up to a smoothing remainder}
In this section  we analyze the para-differential operator 
\begin{equation*}
{\cal P}(U) := \partial_t  + \ii E {\rm Op}^{bw}(\Lambda(\x)) + {\cal A}(U)
\end{equation*}
where $\mathcal{A}(U)$ is in \eqref{def:opAA} and
 the symbol $\Lambda(\x)$ is in \eqref{def Lambda xi}.
We prove the following  result.
\begin{proposition}\label{prop off diagonal}
Let $N \in \N$, $s_0 \gg d/2$. 
Then there exists $\rho = \rho_N > N, s_0$ 
large enough such that if \eqref{ansatz u} holds, then the following holds. 
There exists a linear symplectic invertible transformation $\Phi^{(1)}(U) : {\bf H}^{s}\to{\bf H}^{s}$ 
such that 
\begin{equation}\label{killed off diag}
{\cal P}^{(1)}(U) := \Phi^{(1)}(U)^{- 1}{\cal P}(U) \Phi^{(1)}(U) 
= \partial_t + \ii E {\rm Op}^{bw}(\Lambda(\x)) + {\cal A}^{(1)}(U) 
+ {\cal R}^{(1)}(U)
\end{equation}
where 
\[
{\cal A}^{(1)}(U) = \ii \opbw\begin{pmatrix}
a^{(1)}(U; x, \xi) & 0 \\
0 & -a^{(1)}(U; x, - \xi)
\end{pmatrix}
\]
with $a^{(1)}(U;x,\x)$ a real symbol in the class $\Sigma_1^1$ and 
the remainder ${\cal R}^{(1)}(U)\in {\cal S}(\rho, N)$ 
is Hamiltonian.
Moreover, for any $s \geq \rho$, one has
\[
\| \Phi^{(1)}(U)^{\pm 1} - {\rm Id} \|_{{\cal L}(H^s)} \lesssim_s \| u \|_\rho\,. 
\]
\end{proposition}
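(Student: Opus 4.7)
The plan is to iteratively eliminate the off-diagonal entries of the principal symbol of $\mathcal{A}(U)$ via conjugations by off-diagonal paradifferential flows $\Phi_{\psi_j}(U)$ of the type defined in \eqref{off diag astratto}, decreasing the order of the off-diagonal part by one unit at each step, until after finitely many iterations what remains off-diagonal is of order $\leq -N$ and can be absorbed in the smoothing class $\mathcal{S}(N)$. Throughout, symplecticity is guaranteed by Lemma \ref{well-posflusso2}, and the Hamiltonian structure is preserved because symplectic conjugation maps Hamiltonian operators to Hamiltonian operators.

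\emph{Step 1 (first cancellation).} Recall from Proposition \ref{NLSparapara} that the off-diagonal entries of $\ii E M(U;x,\xi)$ are $\pm \ii b(U;x,\xi)$ (up to conjugation in $\xi$), with $b \in \Sigma_1^1$. I look for $\psi_1(U;x,\xi)\in\Sigma_1^{-1}$ such that Lemma \ref{proprieta coniugio off diagonal}(ii), applied with $n=1$, produces an off-diagonal matrix which cancels $\ii b$. A direct computation gives the explicit choice
\[
\psi_1(U;x,\xi):=\frac{\ii\,b(U;x,\xi)}{2\,\Lambda(\xi)}\,,
\]
which belongs to $\Sigma_1^{-1}$ because $\Lambda(\xi)\sim\langle\xi\rangle^2$ is elliptic (thanks to $m>0$ and positive definiteness of $G$). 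The symmetry in $\xi$ required for $\psi_1$ to generate an admissible off-diagonal matrix symbol follows from the structural identity between $b$ and $\overline{b(U;x,-\xi)}$ embedded in the Hamiltonian form of $\mathcal{A}(U)$. Conjugating $\mathcal{P}(U)$ by $\Phi_{\psi_1}(U)$ and using Lemma \ref{proprieta coniugio off diagonal}(i)-(ii)-(iii)-(iv) together with Lemma \ref{coniugio partial t flusso diag} (which applies since $u$ satisfies the NLS equation, and $X_{NLS}$ obeys \eqref{ipovector} by paralinearization), the top-order off-diagonal entries vanish, and the residue is a sum of: a diagonal symbol in $\Sigma_1^1$ (the old $\ii E\opbw\begin{smallmatrix}(a\;\;0\\0\;\;\overline{a(-\xi)})\end{smallmatrix}$ plus new contributions of order $\leq 1$), new off-diagonal symbols of order $0$, and smoothing operators in $\mathcal{S}(N)$.

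\emph{Step j (iteration).} Suppose that after $j-1$ steps the operator has the form $\partial_t+\ii E\opbw(\Lambda)+\opbw(D_j)+\opbw(O_j)+\mathcal{R}_j$, with $D_j\in\Sigma_1^1$ diagonal, $O_j\in\Sigma_1^{2-j}$ off-diagonal, and $\mathcal{R}_j\in\mathcal{S}(N)$. I choose $\psi_j\in\Sigma_1^{-j}$ solving the algebraic equation $-2\Lambda(\xi)\psi_j(U;x,\xi)=-\ii(\text{upper-right entry of }O_j)$, which is solvable by ellipticity of $\Lambda$, and apply Lemma \ref{proprieta coniugio off diagonal}. The interaction with $\ii E\opbw(\Lambda)$ cancels the order $2-j$ off-diagonal part; the interaction with $\opbw(D_j)$ and $\opbw(O_j)$ via item (i) produces only off-diagonal corrections of order $(2-j)-j=2-2j\leq 1-j$, and the $\partial_t$ contribution from item (iii) is of order $-j$. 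Iterating $J$ times with $J$ chosen so that $2-J\leq -N$, the residual off-diagonal paradifferential symbol has order $\leq -N$, hence by Lemma \ref{standard cont bony} the corresponding operator is bounded $H^s\to H^{s+N}$ and can be absorbed into $\mathcal{S}(N)$.

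\emph{Conclusion.} Set $\Phi^{(1)}(U):=\Phi_{\psi_1}(U)\circ\cdots\circ\Phi_{\psi_J}(U)$. By Lemma \ref{well-posflusso2} each factor is a symplectic bounded operator on $\mathbf{H}^s$ with $\|\Phi_{\psi_j}(U)^{\pm 1}-\mathrm{Id}\|_{\mathcal{L}(H^s)}\lesssim_s\|u\|_\rho$ for $\rho\geq\rho_N$ large enough, and the composition inherits the same bound after shrinking $r$. The resulting operator has the form \eqref{killed off diag}, with $\mathcal{A}^{(1)}(U)$ the accumulated diagonal symbol; reality of $a^{(1)}$ and the Hamiltonian character of $\mathcal{R}^{(1)}(U)$ follow from the preservation of the symmetries \eqref{simboAggiunto2} and \eqref{calu} under each symplectic step (cf.\ the last assertion of Lemma \ref{proprieta coniugio off diagonal}(i)-(iii)). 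The main obstacle in making this rigorous is the careful bookkeeping of both the symbolic orders and the homogeneity degrees (linear vs.\ quadratic in $U$) at each iteration, and the verification that the symmetry conditions \eqref{simboAggiunto2} on the matrix-valued symbols, which are needed to keep $\psi_j$ admissible at every step, are propagated through the iteration; this is the reason for organizing the classes $\Sigma^m_1$, $O^m_1$, $\Gamma^m_p$ as done in Section \ref{sez def simpboli e smoothing}.
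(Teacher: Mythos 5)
Your proof follows essentially the same approach as the paper's: an iterative scheme that conjugates by off-diagonal paradifferential flows $\Phi_{\psi_j}(U)$, with $\psi_j$ chosen to solve the homological equation $-2\Lambda(\xi)\psi_j = (\text{off-diagonal term})$, using Lemma \ref{well-posflusso2} for symplecticity and Lemma \ref{proprieta coniugio off diagonal} (all four items) together with Lemma \ref{coniugio partial t flusso diag} for the conjugation bookkeeping, and relying on the preservation of the symmetry \eqref{simboAggiunto2} to keep the Hamiltonian structure and the admissibility of $\psi_j$ at each step. Minor cosmetic discrepancies (an extra factor of $\ii$ in $\psi_1$, and indices shifted by one or two in the declared orders, since the paper's initial off-diagonal symbol $b$ actually has order $0$, forcing $\psi_0\in\Sigma_1^{-2}$) do not affect the substance; you are in fact more explicit than the paper about the termination criterion $2-J\leq -N$ and about why the residual paradifferential tail of order $\leq -N$ can be absorbed into ${\cal S}(N)$.
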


\begin{proof}
The proposition is proved by means of an iterative procedure. 
At the step $n$ of such a procedure one has an operator 
\[
{\cal P}_n(U) = \partial_t + \ii E {\rm Op}^{bw}(\Lambda(\x)) + {\cal A}_n(U) 
+ {\cal B}_n(U) + {\cal R}_n(U)\,,
\]
where 
\begin{align}
&{\cal A}_n(U) = \ii  \opbw\begin{pmatrix}
a_n(U; x, \xi) & 0 \\
0 &-a_n(U; x, - \xi)
\end{pmatrix}\,,\label{albero22} 
\\&
 {\cal B}_n(U) := \ii \opbw\begin{pmatrix}
0 & b_n(U; x, \xi) \\
- \overline{b_n(U; x, - \xi)} & 0
\end{pmatrix}\,,\label{albero23}
\end{align}
$a_n\in \Sigma_1^1$ and $b_n \in \Sigma_1^{- n}$. 
The remainder ${\cal R}_n(U)$ is a linear Hamiltonian 
operator, smoothing of order $- N$ in the class ${\cal S}(\rho, N)$ 
for some $\rho \equiv \rho_N > N$ large enough.  
We consider 
\[
\Phi_n(U) := {\rm exp}(\ii \Psi_n(U)) 
\]
where $\Psi_n(U)$ is a para-differential operator of the form  
\[
\Psi_n(U) := \ii \opbw\begin{pmatrix}
0 & \psi_n(U; x, \xi) \\
- \overline{\psi_n(U; x, - \xi)}  & 0
\end{pmatrix}\,,
\]
where $\psi_n(U; x, \xi)$ is a symbol of order $- n - 2$ which has to be determined appropriately. 
Notice that, for any $n$, the map $\Phi_n(U)$ has the same  form of 
$\Phi_{\psi}(U)$
in \eqref{off diag astratto}. Hence it is well-posed and symplectic 
by Lemma \ref{well-posflusso2}.
We actually choose the symbol $\psi_n(U; x, \xi)$ in such a way that 
\begin{equation}\label{eq homo off diag}
\begin{aligned}
& - 2 \Lambda(\xi) \psi_n(U; x, \xi) + b_n(U; x, \xi) = 0\,, 
\quad \text{hence we set} \quad
\psi_n(U; x, \xi) := \dfrac{b_n(U; x, \xi)}{ 2 \Lambda(\xi)}\,. 
\end{aligned}
\end{equation}
Clearly, since $b_n \in \Sigma^{- n}_1$, then 
$\psi_n \in \Sigma_1^{- n - 2}$. 
Since $\Phi_n(U)$ is symplectic the transformed operator 
\[
{\cal P}_{n + 1}(U) = \Phi_n(U)^{- 1} {\cal P}_n(U) \Phi_n(U)
\]
is Hamiltonian. 
By Lemma \ref{proprieta coniugio off diagonal}, 
and using \eqref{eq homo off diag}, one gets that 
\[
\begin{aligned}
{\cal P}_{n + 1}(U) & = \partial_t + \ii E {\rm Op}^{bw}(\Lambda(\x)) 
+ {\cal A}_{n + 1}(U) + {\cal B}_{n + 1}(U) + {\cal R}_{n + 1}(U)
\end{aligned}
\]
where ${\cal A}_{n + 1}(U)$,  ${\cal B}_{n + 1}(U)$
are operators of the  form \eqref{albero22}, \eqref{albero23} with $n\rightsquigarrow n+1$
for some symbols  $a_{n + 1} \in\Sigma_1^1$  and $b_{n + 1} \in \Sigma_1^{- n - 1}$. 
Moreover the symbol $a_{n+1}$ is real valued by Lemma \ref{proprieta coniugio off diagonal}.
The remainder ${\cal R}_{n + 1}(U)$ is a linear Hamiltonian operator, 
smoothing of order $- N$ in the class ${\cal S}(\rho, N)$ 
for some $\rho \equiv \rho_N > N$ large enough. 
The proof of the lemma is then concluded. 
\end{proof}

\subsection{Normal form on the diagonal term}
In this section we prove a normal form theorem on the operator 
${\cal P}^{(1)}(U)$ in \eqref{killed off diag} 
which is a para-differential version of the normal form procedure developed in \cite{BLM-growth}. Moreover 
we denote by $(\xi; k)$ the scalar product induced 
by the matrix $G$ (see \eqref{matmetrica}), namely $(\xi; k) := G \xi \cdot k$. 
We start with the following definition. 

\begin{definition}{\bf (Normal form symbols).}\label{simboli forma normale}
A symbol $z(x, \xi)$ in ${\cal N}^m_s$ is said to be \emph{in normal form} 
(with parameters $\delta, \ep, \tau$) if 
\[
\begin{gathered}
z( x, \xi) = \sum_{k \in \Z^d} \widehat z( k, \xi) e^{\ii k \cdot x}\
\end{gathered}
\]
satisfies
$$
\widehat z( k, \xi)\neq 0 \quad \Longrightarrow \quad |(\xi; k)| \leq 
\langle \xi  \rangle^{\delta}| k|^{-\tau} \textrm{ and } |k |\leq \langle \xi \rangle^{\ep}\,
$$
for any $k \neq 0$, $\xi \in \R^d$.
\end{definition}
We shall fix appropriately the parameters $\e, \delta \in (0, 1)$, $\tau > 0$ as  
\begin{equation}\label{legami.tra.parametri}
\frac23 < \delta < 1\,, \quad \tau > d - 1,   \quad 0 < \epsilon < \frac{\delta}{\tau + 1}
\end{equation}
cf. \cite{comm-pdes}. The main result of this section is the following.

\begin{proposition}\label{BLM-paradiff}
Let $N \in \N$, $s_0 \gg d/2$. 
Then there exists $\rho = \rho_N > N, s_0$ 
large enough such that if \eqref{ansatz u} holds, then the following holds. There exists a linear symplectic invertible transformation 
$\Phi^{(2)}(U) : {\bf H}^{s}\to {\bf H}^{s}$ such that
\begin{equation*}
{\cal P}^{(2)}(U) := \Phi^{(2)}(U)^{- 1}{\cal P}^{(1)}(U) \Phi^{(2)}(U) 
= \partial_t + \ii E {\rm Op}^{bw}(\Lambda(\x)) + {\cal Z}(U) + {\cal R}^{(2)}(U)
\end{equation*}
where $\mathcal{P}_1(U)$ is in \eqref{killed off diag}, the operator $\mathcal{Z}(U)$
has the form
\begin{equation}\label{fn Z BLM}
{\cal Z}(U) = \ii \opbw\begin{pmatrix}
z(U; x, \xi) & 0 \\
0 & -z(U; x, - \xi)\,,
\end{pmatrix}
\end{equation}
with $z(U;x,\x)$ a real symbol in the class $\Sigma_1^1$ 
which is in normal form, according to the Definition \ref{simboli forma normale}. 
The remainder ${\cal R}^{(2)}(U)$ is Hamiltonian 
and it is in the class ${\cal S}(N)$. Moreover, for any $s \geq \rho$,
one has
\[
\| \Phi^{(2)}(U)^{\pm 1}\|_{{\cal L}(H^s)} \leq 1 + C(s) \| u \|_\rho\,.
\]
\end{proposition}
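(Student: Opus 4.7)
The plan is to construct $\Phi^{(2)}(U)$ as a finite composition of symplectic diagonal paradifferential flows that progressively reduce the order of the non--normal-form part of the diagonal symbol on the upper entry of ${\cal P}^{(1)}(U)$. Since ${\cal P}^{(1)}(U)$ in \eqref{killed off diag} is already block-diagonal, and since for a real scalar generator $g(U;x,\xi)$ the matrix flow ${\bf \Phi}^{\tau}_g(U)$ from \eqref{Phicic} is symplectic by Lemma \ref{lemm:flussoGGG}, it suffices to work at the scalar level on the upper entry (the lower entry being dictated by the conjugation symmetry).

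At the generic iterative step I would have a real diagonal symbol $a \in \Sigma_1^{m}$ with $m \leq 1$, which I would split as $a = z + r$ by setting $\widehat{z}(U;k,\xi) := \chi(k,\xi)\widehat{a}(U;k,\xi)$, where $\chi$ is a smooth cutoff localizing onto the normal-form region $\{|(\xi;k)| \leq \langle\xi\rangle^{\delta}|k|^{-\tau}\} \cap \{|k| \leq \langle\xi\rangle^{\epsilon}\}$ of Definition \ref{simboli forma normale}. Then $z$ is automatically in normal form, and I would solve the homological equation
\[
\{g,\Lambda\} = r\,, \qquad \text{i.e.}\quad \widehat{g}(U;k,\xi) := -\frac{\widehat{r}(U;k,\xi)}{2\ii(\xi;k)}\,, \quad k \neq 0\,,
\]
which is well posed on $\mathrm{supp}(1-\chi)$ since $(\xi;k) \neq 0$ there. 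Combining Lemma \ref{coniugio flusso diag}(i)--(ii) (conjugation of $\opbw(a)$ and $\opbw(\Lambda)$) with Lemma \ref{coniugio partial t flusso diag} (conjugation of $\partial_t$), and using that the leading commutator $[\ii\opbw(g),\ii\opbw(\Lambda)]$ produces $-\opbw(\{g,\Lambda\}) = -\opbw(r)$ at top order, yields after conjugation by $\Phi_g(U)$ an operator in which $a$ is replaced by $z$ plus a new residual non-normal symbol of strictly lower order (gained amount depending on $\delta$), modulo a smoothing remainder in ${\cal S}(N)$ via Lemma \ref{coniugio flusso diag}(iii).

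The core technical difficulty, and the main obstacle, is to verify that the $g$ defined above indeed belongs to the anisotropic class $\Sigma_1^{m-1}$ with tame estimates, so that the iteration converges to a smoothing residual. Away from the normal-form region either $|(\xi;k)| \geq \langle\xi\rangle^{\delta}|k|^{-\tau}$, so division by $(\xi;k)$ gains $\langle\xi\rangle^{-\delta}$ at the cost of a polynomial loss $|k|^{\tau}$, which is recovered by Lemma \ref{non hom symbols Fourier} from the extra $x$-regularity built into Definition \ref{simboli non lineari}; or $|k| \geq \langle\xi\rangle^{\epsilon}$, in which case the same lemma combined with $|k|^{-1} \leq \langle\xi\rangle^{-\epsilon}$ renders the corresponding portion of $g$ a symbol of arbitrarily negative order, contributing only to the smoothing remainder. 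The parameter relations \eqref{legami.tra.parametri} are tailored precisely so that $\xi$-derivatives of $1/(\xi;k)$, scaling like $|k|^{|\alpha_2|}/|(\xi;k)|^{|\alpha_2|+1}$, remain compatible with the anisotropic seminorm \eqref{seminormSimbo} across both sub-regions.

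Iterating the step a finite number of times depending on $N$ and $\delta$, the non-normal residual becomes smoothing of order $-N$ and is absorbed into ${\cal R}^{(2)}(U)$, while the accumulated normal-form contributions sum to a real symbol $z(U;x,\xi) \in \Sigma_1^{1}$ in normal form; reality is preserved throughout by Lemma \ref{coniugio flusso diag}(i) and by the symmetry $\chi(k,\xi)=\chi(-k,\xi)$ (inherited from $\Lambda(\xi)=\Lambda(-\xi)$ and the self-adjointness of $G$). The map $\Phi^{(2)}(U)$ is the composition of the symplectic flows $\Phi_{g_n}(U)$, hence symplectic, with the claimed $H^s$-bound coming from Lemma \ref{lemm:flussoGGG}, while ${\cal R}^{(2)}(U)$ is Hamiltonian since each conjugating map is symplectic and the initial remainder ${\cal R}^{(1)}(U)$ is Hamiltonian.
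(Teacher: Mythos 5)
Your overall plan — iteratively eliminating the non--normal-form part of the diagonal symbol by conjugating with paradifferential flows whose generators solve a homological equation $\{g,\Lambda\}=\text{(residual)}$, tracking the successive gain of order in $\delta$, and dumping what remains into ${\cal S}(N)$ — matches the paper's strategy in spirit. However, there is a genuine gap in the way you set up the homological equation.

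You split $a=z+r$ with $\widehat r = (1-\chi)\widehat a$, where $1-\chi$ is supported on the \emph{union} $\{|(\xi;k)| \gtrsim \langle\xi\rangle^{\delta}|k|^{-\tau}\}\cup\{|k|\gtrsim\langle\xi\rangle^{\epsilon}\}$, and then you set $\widehat g = -\widehat r/(2\ii(\xi;k))$ on all of this support, claiming that $(\xi;k)\neq 0$ there. That is false on the second component: in the region $|k|\gtrsim\langle\xi\rangle^{\epsilon}$ there is no lower bound on $|(\xi;k)|$ — it can be arbitrarily small (and for $\xi=0$ and $k\neq 0$ it is exactly zero), so the division is not controllable. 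The decay in $k$ that you invoke from Lemma~\ref{non hom symbols Fourier} bounds $|\widehat a(k,\xi)|$ but does not tame $1/(\xi;k)$; you cannot convert this piece of $g$ into a symbol of negative order by that route, because the estimate deteriorates without bound as $(\xi;k)\to 0$. In the paper, this is handled by a four-fold decomposition $a=\langle a\rangle + a^{(\rm nr)} + a^{(\rm res)} + a^{(S)}$ (Definition~\ref{rmk dec symb}), where $a^{(\rm nr)}$ carries \emph{both} cut-offs $(1-\chi_k)\tilde\chi_k$ — guaranteeing $|(\xi;k)|\gtrsim \langle\xi\rangle^{\delta}|k|^{-\tau}$ on its support, so the division in \eqref{sol equazione omologica NOI} is well-defined — and the large-$k$ piece $a^{(S)}$, where $(\xi;k)$ is uncontrolled, is \emph{not} fed into the homological equation at all: it is shown directly to give rise to an operator in ${\cal S}(N)$ (Lemmas~\ref{prop splitting cal N ms}, \ref{corollario splitting simboli non lineari}). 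You should restrict the generator $g$ to the non-resonant region with $|k|\lesssim\langle\xi\rangle^{\epsilon}$ and route $a^{(S)}$ straight into the remainder.

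Two smaller points. First, your cutoff $\chi(k,\xi)$ is ill-defined at $k=0$ (the bound $\langle\xi\rangle^{\delta}|k|^{-\tau}$ has no meaning there); the $x$-average $\langle a\rangle$ must be separated off by hand, as the paper does, before localizing in the dual variables. Second, to guarantee that the iteration terminates after finitely many steps, you need the explicit gain $\mathfrak e=\min\{\delta, 3\delta-2, 2\delta-1\}$ obtained by carefully tracking the orders produced by Lemmas~\ref{coniugio flusso diag} and~\ref{coniugio partial t flusso diag}; the phrase ``gained amount depending on $\delta$'' leaves it open whether the gain is uniformly positive across the conjugation of $\partial_t$, $\Lambda$, $z_n$, and $a_n$, which is precisely where the constraint $\delta>2/3$ from \eqref{legami.tra.parametri} enters.
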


In order to prove the proposition stated above, 
we need some further symbolic calculus.
First of all, we consider an even smooth cut-off function 
$\chi: \R \rightarrow [0, 1]$ 
with the property that 
{$\chi(y) = 1$} for all $y$ with 
$|y| \leq\tfrac{1}{2}$ and {$\chi(y) = 0$} 
for all $y$ with $|y| \geq 1$.

\begin{definition}\label{rmk dec symb}
Given $\ep, \delta , \tau$ as in \eqref{legami.tra.parametri}, define the following functions:
\begin{align*}
\chi_k(\xi) &= 
\chi\left(\frac{2 |k|^\tau (\xi ; k) }{\langle\xi \rangle^\delta}\right)\,, 
\;\quad  k \in \Z^d\setminus\{0\},
\\
\tilde{\chi}_k(\xi) &= \chi \left(\frac{|k|}{\langle \xi 
\rangle^\e}\right)\, , \qquad \qquad k \in \Z^d\setminus \{0\}\ .
\end{align*}
Correspondingly, given a symbol $a \in {\cal N}^m_s$, 
we decompose it as follows:
\begin{equation*}
a = \langle a \rangle +  a^{(\rm nr)} + a^{(\rm res)} + a^{(S)}\,,
\end{equation*}
where $\langle a \rangle$ is the $x$-average of the symbol of $a\,,$
namely
\begin{equation*}
\langle a \rangle ( \xi) = \frac{1}{\mu( \T^d)}\int_{\T^d} a ( x, \xi)\ d x\,,
\end{equation*}
and
\begin{equation}\label{decomp} 
\begin{aligned}
a^{(\rm res)}(x, \xi) &= 
\sum_{k \neq 0}  \chi_k(\xi) \tilde{\chi}_k(\xi) \widehat a( k, \xi) e^{\ii k \cdot x}\,,
\\
a^{(\rm nr)}(  x, \xi) &= 
\sum_{k \neq 0}  \left(1 - \chi_k(\xi)\right) \tilde{\chi}_k(\xi) \widehat a( k, \xi) e^{\ii k \cdot x}\,,
\\
a^{(S)}(  x, \xi) &= 
\sum_{k \neq 0}  \left(1 - \tilde{\chi}_k(\xi)\right) \widehat a( k, \xi) e^{\ii k \cdot x}\,.
\end{aligned}
\end{equation}
We also define 
\begin{equation}\label{sol equazione omologica NOI}
g_a(x, \xi) := - \sum_{k \neq 0} \frac{1}{2 (\xi; k)}
\left(1 - \chi_k(\xi)\right) \tilde{\chi}_k(\xi) 
\widehat a( k, \xi) e^{\ii k \cdot x}\,. 
\end{equation}
\end{definition}

In \cite{comm-pdes}, Lemma 5.4, we provided suitable bounds 
for the cut-off functions defined above. 

\begin{lemma}
For any muti-index $\alpha \in \N^d$, one has 
\begin{equation}\label{pastiera}
\begin{aligned}
& |\chi_k(\xi)| \leq 1 \,, 
\quad |\partial_\xi^\alpha \chi_k(\xi)| \lesssim_\alpha 
|k|^{(\tau + 1)|\alpha|}\langle \xi \rangle^{-\delta |\alpha|}\,,
\\& 
|\tilde \chi_k(\xi)| \leq 1 \,, 
\quad |\partial_\xi^\alpha \tilde \chi_k(\xi)| \lesssim_\alpha  
|k|^{|\alpha|}\langle \xi \rangle^{-(\e + |\alpha|)}\,,
\end{aligned}
\end{equation}
and 
\begin{equation}\label{cut off dk}
d_k(\xi) := \frac{1}{2 (\xi ; k)} (1 - \chi_k(\xi))\,, 
\quad 
|\partial_\xi^\alpha d_k(\xi)| \lesssim_\alpha 
\frac{\langle k \rangle^{(|\alpha| + 1) \tau 
+ |\alpha|}}{ \langle \xi \rangle^{\delta(|\alpha| + 1)}} \,. 
\end{equation}
As a consequence, for any $s \geq 0$ and $k \in \Z^d$, one has 
\begin{equation}\label{stima chi k tilde k}
|\chi_k|_{0, s} \lesssim_s \langle k \rangle^{(\tau + 1)s}\,, 
\quad 
|\tilde \chi_k|_{0, s} \lesssim_s \langle k \rangle^s\,, 
\quad 
|d_k|_{- \delta, s} \lesssim_s  \langle k \rangle^{(s + 1) \tau + s}\,.
\end{equation}
\end{lemma}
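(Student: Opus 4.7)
The plan is to treat each cut-off separately by writing it as a smooth function of a simple argument and applying the Fa\`a di Bruno formula. Throughout we use that $k\in\Z^d\setminus\{0\}$, so $|k|\geq 1$ and $\langle k\rangle\asymp|k|$, together with $\langle\xi\rangle\geq1$, so that lower powers of these factors can always be dominated by higher powers.

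For $\chi_k$, write $\chi_k=\chi\circ\phi_k$ with $\phi_k(\xi)=\frac{2|k|^{\tau}(\xi;k)}{\langle\xi\rangle^\delta}$. Since $(\xi;k)=Gk\cdot\xi$ is linear in $\xi$, the Leibniz rule applied to $(Gk\cdot\xi)\cdot\langle\xi\rangle^{-\delta}$ immediately yields
\[
|\partial_\xi^\alpha \phi_k(\xi)|\lesssim_\alpha |k|^{\tau+1}\langle\xi\rangle^{-\delta-|\alpha|+1},\qquad |\alpha|\geq 1.
\]
Fa\`a di Bruno then expands $\partial_\xi^\alpha\chi_k$ as a sum, over partitions of $\alpha$ into $p\geq 1$ blocks $\beta_1,\dots,\beta_p$, of terms bounded by $|k|^{(\tau+1)p}\langle\xi\rangle^{-\delta p+p-|\alpha|}$. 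Comparing a term with $p<|\alpha|$ blocks to the claimed bound $|k|^{(\tau+1)|\alpha|}\langle\xi\rangle^{-\delta|\alpha|}$, one needs $|k|^{(\tau+1)(|\alpha|-p)}\langle\xi\rangle^{(1-\delta)(|\alpha|-p)}\geq 1$, which holds because $|k|,\langle\xi\rangle\geq1$ and $\delta<1$. This gives the first estimate in \eqref{pastiera}.

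For $\tilde\chi_k=\chi\circ\psi_k$ with $\psi_k(\xi)=|k|\langle\xi\rangle^{-\epsilon}$, the same argument applies with $|\partial_\xi^\alpha\psi_k|\lesssim|k|\langle\xi\rangle^{-\epsilon-|\alpha|}$; Fa\`a di Bruno produces terms $|k|^p\langle\xi\rangle^{-\epsilon p-|\alpha|}$ for $1\leq p\leq|\alpha|$, all of which are $\lesssim|k|^{|\alpha|}\langle\xi\rangle^{-\epsilon-|\alpha|}$ using $|k|\geq 1$ and $\langle\xi\rangle\geq 1$. For $d_k$, the key observation is to write
\[
d_k(\xi)=\frac{|k|^\tau}{\langle\xi\rangle^\delta}\,h(\phi_k(\xi)),\qquad h(y):=\frac{1-\chi(y)}{y},
\]
and to notice that because $1-\chi$ vanishes on $|y|\leq 1/2$, the function $h$ extends to a globally smooth, bounded function on $\R$ with all derivatives bounded (it behaves like $1/y$ at infinity). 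Then Fa\`a di Bruno applied to $h\circ\phi_k$ gives $|\partial_\xi^\beta h(\phi_k)|\lesssim|k|^{(\tau+1)|\beta|}\langle\xi\rangle^{-\delta|\beta|}$ by the same argument as for $\chi_k$; combined with $|\partial_\xi^{\alpha-\beta}\langle\xi\rangle^{-\delta}|\lesssim\langle\xi\rangle^{-\delta-(|\alpha|-|\beta|)}$ via Leibniz, the maximum over $0\leq|\beta|\leq|\alpha|$ is attained at $|\beta|=|\alpha|$ (using $1-\delta>0$), producing the claimed bound $\langle k\rangle^{(|\alpha|+1)\tau+|\alpha|}\langle\xi\rangle^{-\delta(|\alpha|+1)}$.

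Finally, the seminorm bounds \eqref{stima chi k tilde k} are a direct consequence: for Fourier multipliers, $|a|_{m,s}=\sup_{|\alpha|\leq s}\sup_\xi|\partial_\xi^\alpha a(\xi)|\langle\xi\rangle^{-m+\delta|\alpha|}$, so plugging in the pointwise bounds we have just derived and observing that the $\langle\xi\rangle$-dependence either cancels out (for $\chi_k$ and $d_k$ with $m=-\delta$) or remains negative (for $\tilde\chi_k$), only the $|k|$-weights survive, giving $\langle k\rangle^{(\tau+1)s}$, $\langle k\rangle^s$, and $\langle k\rangle^{(s+1)\tau+s}$ respectively. The only place one has to be careful is the algebra in Fa\`a di Bruno — in particular checking that the partitions with a small number of blocks really are dominated — but once one isolates the extremal partition this becomes a one-line monotonicity argument in the variables $\langle k\rangle, \langle\xi\rangle\geq 1$.
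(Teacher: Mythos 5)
The paper does not actually prove this lemma; it simply cites Lemma~5.4 of \cite{comm-pdes}. Your proposal supplies a self-contained proof, and I have checked it carefully: it is correct. A few remarks on why it works.

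Your strategy is the natural one, namely to isolate the inner argument of each cut-off and apply Fa\`a di Bruno. For $\chi_k=\chi\circ\phi_k$ with $\phi_k(\xi)=2|k|^\tau(Gk\cdot\xi)\langle\xi\rangle^{-\delta}$, the Leibniz bound $|\partial^\alpha\phi_k|\lesssim_\alpha|k|^{\tau+1}\langle\xi\rangle^{1-\delta-|\alpha|}$ for $|\alpha|\ge1$ is right (one factor of $|k|$ from $Gk$, one $|k|^\tau$ from the prefactor, and linearity of $Gk\cdot\xi$ controls the Leibniz expansion). The Fa\`a di Bruno terms for a partition into $p$ blocks are of size $|k|^{(\tau+1)p}\langle\xi\rangle^{p(1-\delta)-|\alpha|}$, and your comparison $|k|^{(\tau+1)(|\alpha|-p)}\langle\xi\rangle^{(1-\delta)(|\alpha|-p)}\ge1$ (using $p\le|\alpha|$, $|k|\ge1$, $\langle\xi\rangle\ge1$, $\delta<1$) is exactly the correct monotonicity in the extremal partition. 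The argument for $\tilde\chi_k$ is the same with the simpler inner function $\psi_k=|k|\langle\xi\rangle^{-\epsilon}$.

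The clean observation for $d_k$ is the factorization $d_k=\frac{|k|^\tau}{\langle\xi\rangle^\delta}\,h(\phi_k)$ with $h(y)=(1-\chi(y))/y$. Since $1-\chi$ vanishes near $y=0$, $h$ is a genuinely smooth bounded function on $\R$ with bounded derivatives of all orders (on $|y|\le1/2$ it is identically zero, on a compact annulus it is smooth, and for $|y|$ large it equals $1/y$). This lets you reuse the $\chi_k$ estimate verbatim for $h\circ\phi_k$, and the Leibniz combination with $\langle\xi\rangle^{-\delta}$ is maximized at $|\beta|=|\alpha|$ precisely because $1-\delta>0$, which reproduces \eqref{cut off dk}. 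Feeding the pointwise bounds into the definition \eqref{seminormSimbo} of the seminorm (with $m=0$ for $\chi_k,\tilde\chi_k$ and $m=-\delta$ for $d_k$) then gives \eqref{stima chi k tilde k}: the $\langle\xi\rangle$-dependence either cancels exactly or is a nonpositive power, and only the powers of $\langle k\rangle$ survive. In short, the blind proof is correct and complete; it is a standard Fa\`a-di-Bruno/Leibniz computation, of the same kind as in \cite{comm-pdes}, to which the paper defers.
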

We now prove the following Lemma.

\begin{lemma}\label{prop splitting cal N ms}
Let $m \in \R$. Then, for any $s \geq 0$, the linear map ${\cal N}^m_s \to {\cal N}^m_s$, 
$a \mapsto \langle a \rangle$ is linear and continuous.  
For any $s \geq 0$, there exists $\sigma_s > s$ large enough such that the maps
(see Def. \ref{rmk dec symb})
\[
\begin{aligned}
& {\cal N}^m_{\sigma_s} \to {\cal N}^m_s\,, 
\qquad a \mapsto a^{\rm nr}\,, 
\quad a \mapsto a^{\rm res}\,, 
\\& 
{\cal N}^m_{\sigma_s} \to {\cal N}^{m - \delta}_s\,, 
\quad a \mapsto g_a\,,
\end{aligned}
\]
are linear and continuous. 
Let $N \in \N$, Then there exists $\rho = \rho_N > 0$ large enough, 
such that, for any $s \geq \rho$, the map 
\[
{\cal N}^{m}_{\rho} \to {\cal B}(H^s, H^{s + N})\,, 
\quad 
a \mapsto {\cal R}^{(S)}(a) := {\rm Op}^{bw}(a^{(S)})\,,
\]
is linear and continuous. 
\end{lemma}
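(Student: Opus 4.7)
The plan is to prove the three assertions separately by direct estimation of the symbol seminorms \eqref{seminormSimbo}, relying on three ingredients already at hand: the cutoff bounds \eqref{pastiera}--\eqref{stima chi k tilde k} together with \eqref{cut off dk}, the rapid Fourier decay provided by Lemma \ref{non hom symbols Fourier}, and Lemma \ref{standard cont bony} for the Sobolev mapping property. The estimate for the $x$-average is immediate: since $\langle a\rangle(\xi)$ depends only on $\xi$ and $\partial_\xi^\alpha \langle a\rangle(\xi) = \mu(\T^d)^{-1}\int_{\T^d} \partial_\xi^\alpha a(x,\xi)\,dx$, one obtains $|\langle a\rangle|_{m,s}\le |a|_{m,s}$ without any loss.

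For $a^{(\mathrm{res})}$ (and, identically, for $a^{(\mathrm{nr})}$) I would differentiate the Fourier series term by term,
\[
\partial_x^{\alpha_1}\partial_\xi^{\alpha_2}a^{(\mathrm{res})}(x,\xi)=\sum_{k\ne 0}(\ii k)^{\alpha_1} e^{\ii k\cdot x}\sum_{\beta\le\alpha_2}\binom{\alpha_2}{\beta}\partial_\xi^\beta(\chi_k\tilde\chi_k)(\xi)\,\partial_\xi^{\alpha_2-\beta}\widehat a(k,\xi),
\]
and apply \eqref{stima chi k tilde k} to obtain $|\partial_\xi^\beta(\chi_k\tilde\chi_k)(\xi)|\lesssim \langle k\rangle^{(\tau+2)|\beta|}\langle\xi\rangle^{-\delta|\beta|}$ together with $|\partial_\xi^{\alpha_2-\beta}\widehat a(k,\xi)|\le \langle\xi\rangle^{m-\delta|\alpha_2-\beta|}|\widehat a(k,\cdot)|_{m,s}$. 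After multiplication by $\langle\xi\rangle^{-m+\delta|\alpha_2|}$ the powers of $\langle\xi\rangle$ cancel exactly, leaving the bound $\sum_{k\ne 0}\langle k\rangle^{|\alpha_1|+(\tau+2)|\alpha_2|}\,|\widehat a(k,\cdot)|_{m,s}$. Lemma \ref{non hom symbols Fourier} then replaces $|\widehat a(k,\cdot)|_{m,s}$ by $\langle k\rangle^{-M}|a|_{m,s+M}$ for any $M$, and choosing $M>|\alpha_1|+(\tau+2)|\alpha_2|+d$ makes the series converge, yielding $|a^{(\mathrm{res})}|_{m,s}\lesssim_s |a|_{m,\sigma_s}$ with $\sigma_s:=s+M$. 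The same computation applies to $a^{(\mathrm{nr})}$ since $(1-\chi_k)\tilde\chi_k$ satisfies identical bounds. For $g_a$ one repeats the calculation using \eqref{cut off dk} instead: the factor $d_k(\xi)\tilde\chi_k(\xi)$ is of order $-\delta$ in $\xi$ and polynomial in $\langle k\rangle$, producing $|g_a|_{m-\delta,s}\lesssim_s |a|_{m,\sigma_s}$.

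The smoothing statement on $a^{(S)}$ is the only step that genuinely uses support properties. The key observation is that on $\mathrm{supp}(1-\tilde\chi_k(\xi))$ one has $|k|\ge \tfrac12\langle\xi\rangle^{\e}$, and likewise $\partial_\xi^\beta\tilde\chi_k$ is supported where $|k|\sim\langle\xi\rangle^\e$; in both cases $\langle\xi\rangle^{N+m}\lesssim \langle k\rangle^{(N+m)/\e}$ on the relevant region. By Lemma \ref{standard cont bony} it suffices to bound $|a^{(S)}|_{-N,s_0}$ for some $s_0>d/2$, and repeating the Leibniz expansion above, now weighted by $\langle\xi\rangle^{N+\delta|\alpha_2|}$ rather than $\langle\xi\rangle^{-m+\delta|\alpha_2|}$, produces an extra factor $\langle\xi\rangle^{N+m}$ which is absorbed by $\langle k\rangle^{(N+m)/\e}$ via the support property. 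Choosing $M$ in Lemma \ref{non hom symbols Fourier} strictly larger than $s_0+(\tau+2)s_0+(N+m)/\e+d$ closes the sum and yields the smoothing bound with $\rho:=s_0+M$. The whole proof is an exercise in bookkeeping; the only real obstacle is to fix $\sigma_s$ and $\rho=\rho_N$ as explicit affine functions of $s$, $N$, $\tau$, $1/\e$ that dominate every polynomial loss appearing in the three Leibniz expansions.
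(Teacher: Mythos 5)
Your proof follows essentially the same route as the paper's: for $a^{\rm nr}$, $a^{\rm res}$, $g_a$ you combine the cutoff bounds \eqref{pastiera}--\eqref{stima chi k tilde k} (the paper invokes the seminorm algebra directly via $|d_k|_{-\delta,s}|\tilde\chi_k|_{0,s}|\widehat a(k,\cdot)|_{m,s}$ rather than the Leibniz expansion you spell out, but these are the same estimate) with the Fourier decay of Lemma \ref{non hom symbols Fourier}, choosing the decay exponent large enough to make the $k$-sum converge; and for $a^{(S)}$ you exploit the support property $|k|\gtrsim\langle\xi\rangle^{\e}$ to trade $\xi$-growth for $k$-powers, then close via Lemma \ref{standard cont bony}. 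The bookkeeping constants differ slightly (the paper fixes $N:=(s+1)\tau+2s+d+1$ and $M:=d+1+s+(N+m+s)/\e$; your $(\tau+2)$ is a harmless overbound of the paper's $(\tau+1)$), but the argument is the same.
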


\begin{proof}
Since $\langle a \rangle$ is only the space average of the symbol $a$ 
it is straightforward that 
$|\langle a \rangle|_{m, s} \lesssim |a|_{m, s}$. 
We now estimate $g_a$ in terms of $a$. 
The estimates for $a^{\rm nr}$ and $a^{\rm res}$ can be done arguing similarly. 
By the definitions \eqref{sol equazione omologica NOI}, \eqref{cut off dk}, 
one has that $g_a$ can be written as 
\[
g_a(x, \xi) = - \sum_{k \neq 0} d_k(\xi) \tilde{\chi}_k(\xi) \widehat a( k, \xi) e^{\ii k \cdot x}\,.
\]
By applying Lemma \ref{non hom symbols Fourier} 
one gets, for any $s \geq 0$, $N \in \N$, for any $k \in \Z^d$, that
\begin{equation}\label{stima aq nel lemma}
|\widehat a(k,  \cdot)|_{m, s} \lesssim_{ N} 
\langle k \rangle^{- N}|a|_{m, s + N}\,. 
\end{equation}
Fix 
\[
N :=  (s + 1)\tau + 2 s + d + 1\,.
\]
One has that 
\[
\begin{aligned}
|d_k \tilde \chi_k \widehat a(k, \cdot)|_{m - \delta, s} 
&  
\lesssim_s |d_k|_{- \delta, s}|\tilde{\chi}_k|_{0, s} |\widehat a( k, \cdot)|_{m, s} 
\stackrel{\eqref{stima chi k tilde k}, \eqref{stima aq nel lemma}}{\lesssim_s} 
\langle k \rangle^{(s + 1) \tau + 2 s  - N} |a|_{m, s + N} 
\\& 
\lesssim_s \langle k \rangle^{- d - 1} |a|_{m, s + N}\,.
\end{aligned}
\]
Using that $\sum_{k} \langle k \rangle^{- d - 1}$ is convergent, 
one then gets that 
$|g_a|_{m - \delta, s} \lesssim_s |a|_{m, s + N}$
and the claimed statement follows. 
		
\noindent
We now estimate the symbol $a^{(S)}$. 
By the definition of the cut off function $\tilde \chi_k$ in Def. \ref{rmk dec symb}, 
one has that 
\[
{\rm supp}(1- \tilde \chi_k) \subseteq 
\big\{ \xi : \langle \xi \rangle^\e \leq 2 |k| \big\}\,,
\]
hence, by the estimate \eqref{pastiera}, 
one has that, for any $N \in \N$, $\alpha \in \N^d$,
\[
\langle \xi \rangle^{N + m} |1 - \tilde \chi_k(\xi)| 
\lesssim \langle k \rangle^{(N + m)/\e}\,, 
\quad 
\langle \xi \rangle^{N + m + |\alpha|} |\partial_\xi^\alpha(1 - \tilde \chi_k(\xi))| 
\lesssim_\alpha \langle k \rangle^{|\alpha| + \frac{N + m + |\alpha|}{\e}}\,,
\]
implying that 
\begin{equation}\label{pastiera 1}
|1 - \tilde \chi|_{- N - m, s} 
\lesssim_s \langle k \rangle^{s + \frac{N + m + s}{\e}}\,. 
\end{equation}
Now fix 
\[
M := d + 1 + s + (N+m+s)\e^{-1}\,.
\]
By Lemma  \ref{non hom symbols Fourier}, one has 
\[
\begin{aligned}
|(1 - \tilde \chi_k) \widehat a(k, \cdot)|_{- N, s} & 
\lesssim |1 - \tilde \chi_k|_{- N - m, s} |\widehat a(k, \cdot)|_{m, s} 
\stackrel{\eqref{pastiera 1}}{\lesssim_s} 
\langle k \rangle^{s + \frac{N + m + s}{\e} - M} |a|_{m, s + M} 
\\& 
\lesssim_s \langle k \rangle^{- d - 1} |a|_{m, s + M}\,.
\end{aligned}
\]
Hence, using that $\sum_{k} \langle k \rangle^{- d - 1}$ is convergent, 
one gets that 
\begin{equation}\label{stima a S a nella dim}
|a^{(S)}|_{- N, s} \lesssim_s |a|_{m, s + M}\,.
\end{equation} 
We now consider the operator ${\cal R}^{(S)}(a) := {\rm Op}^{bw}(a^{(S)})$. 
Its action is given by 
\[
\begin{aligned}
{\cal R}^{(S)}(a)[w] & = 
\sum_{k, \xi \in \Z^d} \eta_\e \Big(\frac{|k - \xi|}{\langle k + \xi \rangle} \Big) 
\widehat a^{(S)}\Big(k - \xi, \frac{k + \xi}{2} \Big) \widehat u(\xi) e^{\ii k \cdot x} 
\\& 
\stackrel{\eqref{decomp}}{=} 
\sum_{k, \xi \in \Z^d} \eta_\e \Big(\frac{|k - \xi|}{\langle k + \xi \rangle} \Big) 
\Big(1 - \tilde{\chi}_k\Big( \frac{k + \xi}{2} \Big) \Big) 
\widehat a\Big(k - \xi, \frac{k + \xi}{2} \Big) \widehat u(\xi) e^{\ii k \cdot x} \,.
\end{aligned}
\]
Clearly the map $a \mapsto {\cal R}^{(S)}(a)$ is linear. 
By defining 
\[
\rho := s_0 + M = s_0 + d + 1 + s_0 + \frac{N + m + s_0}{\e}\,, 
\quad s_0 := \frac{d}{2} + 1
\]
the estimate \eqref{stima a S a nella dim} reads 
$|a^{(S)}|_{- N, s_0} \lesssim_N |a|_{m, \rho}$ 
and therefore, by applying Lemma \ref{standard cont bony}, 
one has that, 
for any $s \geq \rho$, 
\[
\| {\cal R}^{(S)}(a) \|_{{\cal L}(H^s, H^{s + N})} 
\lesssim_s |a^{(S)}|_{- N, s_0} 
\lesssim_{s, N} |a|_{m, \rho}\,.
\]
Hence the linear map 
\[
{\cal N}^m_\rho \to {\cal L}(H^s, H^{s + N})\,, 
\quad a \mapsto {\cal R}^{(S)}(a)
\]
is bounded. The claimed statement has then been proved. 
\end{proof}

\begin{lemma}\label{corollario splitting simboli non lineari}
Let $a \in \Sigma^m_1$. 
Then 
$\langle a \rangle, a^{(\rm nr)}, a^{(\rm res)} \in \Sigma^m_1$ 
and $g_a \in \Sigma^{m - \delta}_1$. 
Moreover, for any $N \in \N$, the remainder
${\cal R}^{(S)}(U) := {\rm Op}^{bw}\Big( a^{(S)}(U; x, \xi) \Big)$ 
belongs to the class ${\cal S}( N)$.
\end{lemma}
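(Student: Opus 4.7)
\smallskip
\noindent\textbf{Proof plan.} By Definition \ref{simboli}, write $a = a_l + a_q$ with $a_l\in O^m_1$ and $a_q\in \Gamma^m_2$. Since the four pieces appearing in Definition \ref{rmk dec symb} depend \emph{linearly} on the symbol $a$, I can treat the contributions of $a_l$ and $a_q$ separately, so the statement will follow from: (i) $\Gamma^m_p$ is preserved by the decomposition, and (ii) $O^m_1$ is preserved and the associated Bony--Weyl smoothing operator lies in $\mathcal{O}\mathcal{S}_1(N)$ rather than merely in $\mathcal{S}_2(N)$.

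\smallskip
\noindent For the nonhomogeneous part $a_q\in \Gamma^m_2$, for each fixed $U\in B_{\sigma_s}(r)$ the symbol $a_q(U;\cdot,\cdot)$ lies in $\mathcal{N}^m_s$, so Lemma \ref{prop splitting cal N ms} applies pointwise in $U$. Since the maps $a\mapsto \langle a\rangle$, $a\mapsto a^{(\mathrm{nr})}$, $a\mapsto a^{(\mathrm{res})}$, $a\mapsto g_a$ are linear in the symbol, they commute with differentiation in $U$, namely
\[
d^n a_q^{(\mathrm{nr})}(U;\cdot)[H_1,\dots,H_n] = \bigl(d^n a_q(U;\cdot)[H_1,\dots,H_n]\bigr)^{(\mathrm{nr})},
\]
and similarly for the other three pieces. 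Combining this with the estimates of Remark \ref{stime simboli in omogeneita} and the continuity bounds of Lemma \ref{prop splitting cal N ms}, I get $\langle a_q\rangle, a_q^{(\mathrm{nr})}, a_q^{(\mathrm{res})}\in \Gamma^m_2$ and $g_{a_q}\in \Gamma^{m-\delta}_2$. The remainder $\mathcal{R}^{(S)}(a_q)(U):={\rm Op}^{bw}(a_q^{(S)}(U;x,\xi))$ satisfies, by the last part of Lemma \ref{prop splitting cal N ms} and Remark \ref{stime simboli in omogeneita},
\[
\| \mathcal{R}^{(S)}(a_q)(U) \|_{\mathcal{L}(H^s,H^{s+N})} \lesssim_s |a_q(U;\cdot)|_{m,\rho} \lesssim_s \|U\|_{\sigma_\rho}^2,
\]
which is exactly the bound \eqref{marlene1} defining $\mathcal{S}_2(N)$.

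\smallskip
\noindent The main point is the linear part $a_l\in O^m_1$, of the form $a_l(U;x,\xi) = \sum_{k,\sigma} m_\sigma(k,\xi)\widehat{u}^\sigma(k) e^{\sigma\ii k\cdot x}$. The key observation is that the cut-offs $\chi_k(\xi)$, $\tilde\chi_k(\xi)$ of Definition \ref{rmk dec symb} are \emph{even} in $k$: since $\chi$ is even and $(\xi;-k)=-(\xi;k)$, $|{-k}|=|k|$, one has $\chi_{-k}=\chi_k$ and $\tilde\chi_{-k}=\tilde\chi_k$. Consequently the $x$-Fourier projections in Definition \ref{rmk dec symb} act diagonally on the modes labelled by $(k,\sigma)$, and for instance
\[
a_l^{(\mathrm{nr})}(U;x,\xi) = \sum_{k\neq 0,\sigma}\bigl(1-\chi_k(\xi)\bigr)\tilde\chi_k(\xi)\,m_\sigma(k,\xi)\,\widehat{u}^\sigma(k)\, e^{\sigma\ii k\cdot x},
\]
which is again of the form of Definition \ref{simboli lineari} with new multipliers $\widetilde m_\sigma(k,\xi)=(1-\chi_k(\xi))\tilde\chi_k(\xi)m_\sigma(k,\xi)$; the symbol estimates on these multipliers follow from Lemma \ref{prop splitting cal N ms} applied to $a_l$, so $a_l^{(\mathrm{nr})}\in O^m_1$, and similarly $\langle a_l\rangle, a_l^{(\mathrm{res})}\in O^m_1$ and $g_{a_l}\in O^{m-\delta}_1$.

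\smallskip
\noindent It remains to show $\mathcal{R}^{(S)}(a_l)\in \mathcal{O}\mathcal{S}_1(N)$. Writing the Bony--Weyl quantization \eqref{quantiWeyl} of $a_l^{(S)}$ and using that $\widehat{a_l^{(S)}}(U;j,\xi)$ is linear in $(\widehat{u}(\cdot),\widehat{\bar u}(\cdot))$ concentrated at the single frequency $\sigma(j)=\pm j$, I see that $\mathcal{R}^{(S)}(a_l)(U)[w]$ splits as $\mathcal{R}_+(u)[w]+\mathcal{R}_-(\bar u)[w]$ with each $\mathcal{R}_\pm$ of the bilinear form \eqref{smoothing bilineare}. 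The tame bound \eqref{marlene2} then follows from the continuity statement in Lemma \ref{prop splitting cal N ms} combined with $|a_l(U;\cdot)|_{m,\rho}\lesssim \|U\|_{\sigma_\rho}$ from Remark \ref{stime simboli in omogeneita}, giving $\mathcal{R}^{(S)}(a_l)\in \mathcal{O}\mathcal{S}_1(N)$. Summing the two contributions yields $\mathcal{R}^{(S)}\in \mathcal{S}(N)$ by Definition \ref{smoothopera}. The only subtlety is the parity check on the cut-offs plus the bookkeeping that identifies the Bony--Weyl kernel from an $O^m_1$ symbol with the bilinear form \eqref{smoothing bilineare}; everything else is just a linear propagation of the abstract Lemma \ref{prop splitting cal N ms} through the $U$-dependence.
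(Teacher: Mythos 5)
Your proof is correct and follows essentially the same route as the paper: split $a=a_l+a_q$ per Definition \ref{simboli}, use the linearity of the four projections to treat the two pieces separately, propagate the abstract estimates of Lemma \ref{prop splitting cal N ms} through the $U$-dependence (for $a_q$ this gives the $\Gamma^m_2$ structure, for $a_l$ the multiplier form is preserved), and split ${\cal R}^{(S)}={\cal R}^{(S)}_l+{\cal R}^{(S)}_q$ to identify the ${\cal O}{\cal S}_1(N)$ and ${\cal S}_2(N)$ components. The parity observation $\chi_{-k}=\chi_k$, $\tilde\chi_{-k}=\tilde\chi_k$ is a nice clean way to justify your displayed formula for $a_l^{(\mathrm{nr})}$, but it is not strictly essential for the conclusion $a_l^{(\mathrm{nr})}\in O^m_1$, since the multipliers in Definition \ref{simboli lineari} are arbitrary and one could just carry $\chi_{\sigma k}$ along; the paper omits this remark and instead writes out the Bony--Weyl kernel of ${\cal R}^{(S)}_l$ explicitly to exhibit the bilinear form \eqref{smoothing bilineare}, which your sketch leaves more implicit.
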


\begin{proof}
Let $a \in \Sigma^m_1$. We show that 
$g_a \equiv g_a \in \Sigma_1^{m - \delta}$. 
The proof that $a, a^{(\rm nr)}, a^{(\rm res)} \in \Sigma^m_1$ is analogous. 
Since $a \in \Sigma^m_1$, then 
\begin{equation}\label{splitting pastiera}
a = a_l + a_q, \quad \text{with} 
\quad a_l \in O^m_1 
\quad 
\text{and} 
\quad a_q \in \Gamma^m_2
\end{equation} 
and according to the definitions  
\eqref{sol equazione omologica NOI}, \eqref{cut off dk}, 
one obtains a corresponding splitting $g = g_l + g_q$ where 
\[
\begin{aligned}
& g_{a_l} (x, \xi) = 
- \sum_{k \neq 0} d_k(\xi) \tilde \chi_k(\xi) \widehat a_l(U; x, \xi) e^{\ii k \cdot x}\,, 
\\& 
g_{a_q}(x, \xi) =
- \sum_{k \neq 0} d_k(\xi) \tilde \chi_k(\xi) \widehat a_q(U; x, \xi) e^{\ii k \cdot x} \,. 
\end{aligned}
\]
We show that $g_{a_l} \in O^m_1$ and $g_{a_q} \in \Gamma^m_2$. 
		
\noindent
Since $a_l \in O^m_1$, then 
\[
\widehat a(U; k, \xi) = m_+(k, \xi) \widehat u(k) + m_{- }(k, \xi) \overline{\widehat u(- k)}
\]
for some suitable multipliers $m_+, m_-$ and 
therefore $g_l$ is a symbol which is linear in $U$ and of the same form as $a_l$. 
It remains only to show that $g_l$ is in $\Gamma^m_1$ 
and $g_q \in \Gamma^m_2$. 
Fix $s \geq 0$ and let $\sigma_s > s$ the constant provided by Lemma \ref{prop splitting cal N ms}. 
Since $a_l \in \Gamma^m_1$, $a_q \in \Gamma^m_2$ 
there exists a constant $\sigma_s' > \sigma_s$ 
and a radius $r = r(s) \in (0, 1)$ such that the linear map 
\[
H^{\sigma_s'} \to {\cal N}^m_{\sigma_s}\,, 
\quad U \mapsto a_l(U; x, \xi)
\]
is continuous and the map 
\[
B_{\sigma_s' }(r) \to {\cal N}^m_{\sigma_s}\,, 
\quad U \mapsto a_q(U; x, \xi)
\]
is ${\cal C}^\infty$ and vanishes of order two at $U = 0$. 
Since by Lemma \ref{prop splitting cal N ms}, 
the map ${\cal N}^m_{\sigma_s} \to {\cal N}^m_s$, $a \mapsto g_a$ 
is linear and continuous, then by composition one gets that the map 
\[
B_{\sigma_s' }(r) \to {\cal N}^m_{s}\,, \quad U \mapsto g_{a_l}(U; x, \xi)
\]
is linear and continuous and the map 
\[
B_{\sigma_s' }(r) \to {\cal N}^m_{s}\,, \quad U \mapsto g_{a_q}(U; x, \xi)
\]
is ${\cal C}^\infty$ and vanishes of order two at $U = 0$. 
This shows the claimed statement. 
		
\medskip
		
\noindent
{\sc Analysis of the operator ${\cal R}^{(S)}(U) = {\rm Op}^{bw}(a^{(S)}(U; x, \xi))$.} 
According to \eqref{splitting pastiera}
\begin{equation}\label{pastiera smoothing simbolo}
\begin{aligned}
 a^{(S)} & = a^{(S)}_l + a^{(S)}_q\,, 
 \\
 a^{(S)}_l & = \sum_{k \neq 0}  \left(1 - \tilde{\chi}_k(\xi)\right) \widehat a_l( k, \xi) e^{\ii k \cdot x}  
 \\& 
 = \sum_{k \neq 0}  \left(1 - \tilde{\chi}_k(\xi)\right) m_+(k, \xi)\widehat u(k) e^{\ii k \cdot x} 
 + \sum_{k \neq 0}  \left(1 - \tilde{\chi}_k(\xi)\right) m_-(k, \xi)\overline{\widehat u(- k)} e^{\ii k \cdot x} \,, 
 \\
 a^{(S)}_q & = \sum_{k \neq 0}  \left(1 - \tilde{\chi}_k(\xi)\right) \widehat a_q( k, \xi) e^{\ii k \cdot x}\,,
\end{aligned}
\end{equation}
and correspondingly
\begin{equation*}
\begin{aligned}
& {\cal R}^{(S)}(U) = {\cal R}^{(S)}_l(U) + {\cal R}^{(S)}_q(U)\,, 
\qquad
{\cal R}^{(S)}_l(U) := {\rm Op}^{bw}(a^{(S)}_l)\,, 
\quad {\cal R}_q^{(S)}(U) := {\rm Op}^{bw}(a^{(S)}_q)\,. 
\end{aligned}
\end{equation*}
Fix $N \in \N$ and let $\rho \equiv \rho_N$ be the constant 
appearing in Lemma \ref{prop splitting cal N ms}. 
Since $a_l \in O^m_1$ and $a_q \in \Gamma^m_2$, 
one has that for some constant $\sigma_\rho > \rho$ large enough
\[
|a_l|_{m, \rho} \lesssim_\rho \| U \|_{\sigma_\rho}\,, 
\quad |a_q|_{m, \rho} \lesssim_\rho \| U \|_{\sigma_\rho}^2 \,,
\]
implying that, for any $s \geq \sigma_\rho > \rho$, one has
$$
\begin{aligned}
& \| {\cal R}_l^{(S)}(U) \|_{{\cal L}(H^s, H^{s + N})} \lesssim_s |a_l|_{m, \rho} \lesssim_{s} \| U \|_{\sigma_\rho}, \\
& \| {\cal R}_q^{(S)}(U) \|_{{\cal L}(H^s, H^{s + N})} \lesssim_s |a_q|_{m, \rho} \lesssim_{s} \| U \|_{\sigma_\rho}^2\,,
\end{aligned}
$$
and hence ${\cal R}_q^{(S)} \in {\cal S}_2(N)$. 
In order to show that ${\cal R}_l^{(S)}(U)$ belongs to the class $\mathcal{O}{\cal S}_1(N)$ 
it remains only to show that it is sum
of terms of the form \eqref{smoothing bilineare}. 
This follows since 
\[
\begin{aligned}
{\cal R}^{(S)}_l(U)[w] & = 
\sum_{k, \xi \in \Z^d}\eta_\e \Big(\frac{|k - \xi|}{\langle k + \xi \rangle} \Big) 
\Big(1 - \tilde{\chi}_k\Big( \frac{k + \xi}{2} \Big) \Big) 
\widehat a_l\Big(k - \xi, \frac{k + \xi}{2} \Big) \widehat w(\xi) e^{\ii k \cdot x} 
\\& 
\stackrel{\eqref{pastiera smoothing simbolo}}{=} 
\sum_{k, \xi \in \Z^d} r_+(k, \xi)\widehat u(k - \xi) \widehat w(\xi) e^{\ii k \cdot x} 
+ \sum_{k, \xi \in \Z^d} r_-(k, \xi)\overline{\widehat u( \xi - k))} \widehat w(\xi) e^{\ii k \cdot x}
\end{aligned}
\]
where 
\[
r_{\pm}(k, \xi)  := \eta_\e \Big(\frac{|k - \xi|}{\langle k + \xi \rangle} \Big) 
\Big(1 - \tilde{\chi}_k\Big( \frac{k + \xi}{2} \Big) \Big) m_{\pm}\Big(k - \xi, \frac{\xi + k}{2}\Big)\,.
\]
The claimed statement has then been proved. 
\end{proof}
We are now in position to prove the Proposition \ref{BLM-paradiff}. 
		
\begin{proof}[{\bf Proof of Proposition \ref{BLM-paradiff}}]
The Proposition is proved also inductively, 
hence we describe the induction step of the procedure. In the proof it is convenient to use the following notations. If ${\mathcal O}$ is one of the classes of operators defined in Section \ref{sez def simpboli e smoothing}, we write $A = B + {\cal O}$ if $A - B$ belongs to the class ${\cal O}$. 

\noindent
We define the {\it gain of regularization} along the reduction procedure as
\begin{equation}\label{def frak e}
\mathfrak e := {\rm min}\big\{\delta,  3 \delta - 2, 2 \delta - 1 \big\}\,.
\end{equation} 
At the $n$-th step, we deal with a Hamiltonian para-differential operator of the form 
\begin{equation}\label{calPPnn}
{\cal P}_n^{(1)}(U)  := 
\partial_t + \ii E {\rm Op}^{bw}(\Lambda(\x)) +  {\cal Z}_n(U) + {\cal A}_n(U) + {\cal R}_n(U)
\end{equation}
where
\begin{equation}\label{cal Pn (1)}
\begin{aligned}
{\cal Z}_n(U) & := 
\ii \opbw\begin{pmatrix}
z_n(U; x, \xi)  & 0 \\
0 & -  z_n(U; x, -\xi) 
\end{pmatrix}\,, \quad z_n \in \Sigma_1^1\,,\quad 
z_n \quad \text{is real and in normal form}\,, 
\\
{\cal A}_n(U) & :=\ii\opbw\begin{pmatrix}
a_n(U; x, \xi)  & 0 \\
0 & - a_n(U; x, -\xi) 
\end{pmatrix}\,, 
\quad a_n \in \Sigma_1^{1 - n {\mathfrak e}}, 
\quad a_n \quad \text{is real}\,, 
\\
{\cal R}_n& \in {\cal S}( N)\,. 
\end{aligned}
\end{equation}
By Lemma \ref{corollario splitting simboli non lineari}, 
one has that 
\begin{equation*}
\begin{aligned}
&{\rm Op}^{bw}(a_n)  = 
{\rm Op}^{bw}(\langle a_n \rangle + a_n^{(\rm nr)} + a_n^{(\rm res})) 
+ {\cal R}^{(S)}(a_n)\,, 
\\& 
\langle a_n \rangle, a_n^{(\rm nr)}, a_n^{(\rm res)} \in \Sigma_1^{1 - n \mathfrak e}\,, 
\quad {\cal R}^{(S)}(a_n) \in {\cal S}(N)\,.
\end{aligned}
\end{equation*}
Moreover, by defining (as in \eqref{sol equazione omologica NOI})
\begin{equation*}
g_n(U; x, \xi) := - \sum_{k \neq 0} \frac{1}{2 (\xi; k)}
\left(1 - \chi_k(\xi)\right) \tilde{\chi}_k(\xi) \widehat a_n(U;  k, \xi) e^{\ii k \cdot x}\,,
\end{equation*}
one has that  the symbol $g_{n}(U;x,\x)$ is in $ \Sigma_1^{1 - n \mathfrak e - \delta}$
and solves the equation
\begin{equation}\label{omologica descent noi}
\{ \Lambda, g_n \} + a_n^{(\rm nr)} = 0\,. 
\end{equation}
We then consider the map
\begin{equation*}
{\bf \Phi}_n(U) := \begin{pmatrix}
\Phi_n(U) & 0 \\
0 & \overline{\Phi_n(U)}
\end{pmatrix}
\end{equation*}
where $\Phi_n(U)$ is the time one flow map of 
\[
\partial_\tau \Phi_n(U) = \ii {\rm Op}^{bw}(g_n) \Phi_n^\tau(U)\,, 
\quad \Phi_n^0(U) = {\rm Id}\,.
\]
The map ${\bf \Phi}_n(U)$ is well-posed and symplectic by Lemma \ref{lemm:flussoGGG}.
We now compute the conjugated operator 
\[
{\cal P}_{n + 1}^{(1)}(U) := {\bf \Phi}_n(U)^{- 1} {\cal P}_n^{(1)}(U) {\bf \Phi}_n(U)\,.
\] 
Note that for any $n \geq 0$, $1 - n \mathfrak e - \delta < \delta < 1$, 
hence the conjugation Lemmas of Section \ref{sez calcolo simbolico} can be applied. 
In particular, by applying Lemmata \ref{coniugio flusso diag}, 
\ref{coniugio partial t flusso diag} 
(where $n$ is replaced by $1 - n \mathfrak e - \delta$), 
one gets 
\begin{equation*}
\begin{aligned}
{ \Phi}_n(U)^{- 1} \partial_t {\Phi}_n(U) & =
 \partial_t + {\cal O B}_\Sigma(1 - n \mathfrak e - \delta) + {\cal S}(N)\,, 
 \\
{\Phi}_n(U)^{- 1} \ii  {\rm Op}^{bw}(\Lambda) { \Phi}_n(U) & =  
\ii {\rm Op}^{bw}(\Lambda +  \{ \Lambda, g_n \} ) 
+ {\cal O}{\cal B}_\Sigma (3 - 2 n {\mathfrak e} - 3 \delta) + {\cal S}( N)\,, 
\\
{\Phi}_n(U)^{- 1} \ii {\rm Op}^{bw}(z_n) \Phi_n(U)  & = 
\ii {\rm Op}^{bw}(z_n) + {\cal O}{\cal B}_\Sigma(2 - n \mathfrak e - 2 \delta) + {\cal S}( N)  \,, 
\\
{\Phi}_n(U)^{- 1} \ii {\rm Op}^{bw}(a_n) \Phi_n(U)  & =
\ii {\rm Op}^{bw}(\langle a_n \rangle + a_n^{(\rm nr)} + a_n^{(\rm res})) 
+  {\cal O}{\cal B}_\Sigma(2 - 2 n \mathfrak e - 2 \delta) + {\cal S}(N)\,, 
\\
{\bf \Phi}_n(U)^{- 1} {\cal R}_n(U) {\bf \Phi}_n(U) & = {\cal S}(N)\,. 
\end{aligned}
\end{equation*}
By the definition of ${\mathfrak e}$ given in \eqref{def frak e}, 
one obtains that 
\[
1 - n \mathfrak e - \delta\,, 3 - 2 n {\mathfrak e} - 3 \delta\,, 
2 - n \mathfrak e - 2 \delta\,, 2 - 2 n \mathfrak e - 2 \delta 
\leq 1 - (n + 1) \mathfrak e
\]
and using that $g_n$ solves the equation 
\eqref{omologica descent noi}, 
one obtains that ${\cal P}_{n + 1}^{(1)}(U)$ has the form 
\eqref{calPPnn} with $n\rightsquigarrow n+1$,
for some ${\cal R}_{n + 1}(U) \in {\cal S}(N)$ and 
\begin{equation*}
\begin{aligned}
{\cal Z}_{n + 1}(U) & := 
\ii \opbw\begin{pmatrix}
 z_{n + 1}(U; x, \xi)  & 0 \\
0 & - z_{n + 1}(U; x, -\xi) 
\end{pmatrix}\,, 
\quad z_{n + 1} \in \Sigma_1^1\,, 
\quad
z_{n + 1} := z_n + \langle a_n \rangle + a_{n}^{(\rm res)}\,,
\\
{\cal A}_{n + 1}(U) & := 
\ii \opbw\begin{pmatrix}
a_{n + 1}(U; x, \xi) & 0 \\
0 & - a_{n + 1}(U; x, -\xi) 
\end{pmatrix}\,, 
\quad a_{n + 1} \in \Sigma_1^{1 - (n + 1) {\mathfrak e}}\,.
\end{aligned}
\end{equation*}
%
Since $\Phi_n(U)$  is a linear symplectic map, 
the paradifferential operator ${\cal P}_{n + 1}(U)$is Hamiltonian, 
hence $z_{n + 1}$ and $a_{n + 1}$ are real symbols. 
Furthermore, $z_{n + 1}$ is a symbol in normal form, 
since $z_n$ is in normal form by the induction hypothesis and 
$\langle a_{n}\rangle, a_{n}^{({\rm res})}$ are in normal form by their definition. 
The claimed induction statement has then been proved. 
\end{proof}
		
\section{The Birkhoff normal form step}\label{sec:BNFstep}
By Propositions \ref{prop off diagonal}, \ref{BLM-paradiff}, 
one has that $U$ solves the equation \eqref{paralinearized equation} 
if and only if $U := \Phi^{(1)}(U) \Phi^{(2)}(U) W$ solves 
\begin{equation*}
{\cal P}^{(3)}(U) [W] = 0\,, 
\quad {\cal P}^{(3)}(U) := 
\partial_t  + \ii E {\rm Op}^{bw}(\Lambda(\x)) W + {\cal Z}(U)  + {\cal Q}(U)  
\end{equation*}
where ${\cal Q} \in {\cal S}(N)$ and ${\cal Z}(U)$ 
is the normal form operator provided in Proposition \ref{BLM-paradiff}. 
We now perform a step of Birkhoff normal in order 
to remove the quadratic terms from ${\cal Q}(U) W$. 
Since ${\cal Q} \in {\cal S}(N)$, then 
\begin{equation}\label{decomp quadratic smoothing}
{\cal Q} = {\cal Q}_l + {\cal Q}_q\,, \quad {\cal Q}_l \in {\cal O}{\cal S}_1(N)\,, 
\quad {\cal Q}_q \in {\cal S}_2(N)\,. 
\end{equation} 
We fix the number of regularization step $N$ as 
\begin{equation}\label{N fissato}
N := \tau + 3\,,
\end{equation}
where $\tau$ is the loss of derivatives in the small 
divisors estimate of Lemma \ref{condizioni di non-risonanza}. 
We prove the following.

\begin{proposition}\label{BNF step}
Let $\mathcal{G}\in (0,+\infty)$ be the full Lebesgue measure set
given by Lemma \ref{condizioni di non-risonanza}. 
Then for any $m\in \mathcal{G}$ 
the following holds.
Then there exists $\rho \equiv \rho(\tau) \gg 0$ large enough such that if \eqref{ansatz u} is fullfilled, then the following holds. 
There exists  a linear and invertible transformation 
$\Phi^{(3)}(U) : {\bf H}^{s}\to {\bf H}^{s}$ 
such that 
\begin{equation}\label{op dopo birkhoff}
{\cal P}^{(4)}(U) := \Phi^{(3)}(U)^{- 1}{\cal P}^{(3)}(U) \Phi^{(3)}(U) = 
\partial_t + \ii E {\rm Op}^{bw}(\Lambda(\x)) + {\cal Z}(U) + {\cal R}^{(4)}(U)
\end{equation}
where ${\cal Z}(U)$ is given in Proposition \ref{BLM-paradiff} and 
${\cal R}^{(4)}(U) W$ is cubic and one-smoothing remainder, 
namely it satisfies, for any $s \geq \rho$, $W \in H^s$, 
the estimate
\begin{equation}\label{stima cal R (4)}
\| {\cal R}^{(4)}(U)W \|_{s + 1} \lesssim_s \| U \|_\rho^2 \| W \|_s\,.
\end{equation}
Moreover, for any $s \geq \rho$, one has
\[
\| \Phi^{(3)}(U)^{\pm 1}\|_{{\cal L}(H^s)} \leq 1 + C(s) \| u \|_\rho\,. 
\]
\end{proposition}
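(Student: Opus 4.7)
The plan is to construct $\Phi^{(3)}(U)$ as the time-one flow $\Phi_{\mathcal{F}}(U)$ from \eqref{flusso smoothing}, where $\mathcal{F}(U)$ is a linear-in-$U$ matrix-valued smoothing operator chosen by a single Birkhoff step so as to remove the quadratic (i.e.\ linear-in-$U$) part $\mathcal{Q}_l$ of $\mathcal{Q}(U)$; the quadratic-in-$U$ part $\mathcal{Q}_q \in \mathcal{S}_2(N)$ from \eqref{decomp quadratic smoothing} is already cubic as an operator of $U$ acting on $W$ and can simply be collected into $\mathcal{R}^{(4)}$. Applying Lemma \ref{eq omologica Birkhoff step} to $\mathcal{R} := \mathcal{Q}_l$ I obtain $\mathcal{F} \in \mathcal{OS}_1(N-\tau)$ solving the homological equation
\begin{equation*}
- \mathcal{F}\big(\ii E \opbw(\Lambda)\, U\big) + \big[\ii E \opbw(\Lambda),\, \mathcal{F}(U)\big] + \mathcal{Q}_l(U) = 0,
\end{equation*}
and both the well-posedness of $\Phi^{(3)}(U) := \Phi_{\mathcal{F}}(U)$ and the bound $\|\Phi^{(3)}(U)^{\pm 1}\|_{\mathcal{L}(H^s)} \leq 1 + C(s)\|u\|_\rho$ follow from Lemma \ref{well-posflusso2}.

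Next I would conjugate each summand of $\mathcal{P}^{(3)}(U)$ by $\Phi^{(3)}$ using Lemma \ref{coniugazioni flusso smoothing}; with the smoothing index of $\mathcal{F}$ equal to $N-\tau = 3$ by \eqref{N fissato}, items (iii), (ii), (i), (iv) respectively give
\begin{align*}
\Phi^{(3)-1} \circ \partial_t \circ \Phi^{(3)} &= \partial_t - \mathcal{F}\big(\ii E \opbw(\Lambda)\, U\big) + \mathcal{S}_2(3),\\
\Phi^{(3)-1} \, \ii E \opbw(\Lambda) \, \Phi^{(3)} &= \ii E \opbw(\Lambda) + \big[\ii E \opbw(\Lambda), \mathcal{F}(U)\big] + \mathcal{S}_2(1),\\
\Phi^{(3)-1} \mathcal{Z}(U) \Phi^{(3)} &= \mathcal{Z}(U) + \mathcal{S}_2(2),\\
\Phi^{(3)-1} \mathcal{Q}(U) \Phi^{(3)} &= \mathcal{Q}_l(U) + \mathcal{Q}_q(U) + \mathcal{S}_2(N+3).
\end{align*}
Summing these four identities, the linear-in-$U$ contributions cancel by the homological equation, leaving
\begin{equation*}
\mathcal{R}^{(4)}(U) := \mathcal{Q}_q(U) + \{ \text{all the quadratic-in-}U \text{ smoothing remainders above} \} \in \mathcal{S}_2(1),
\end{equation*}
the minimum smoothing order being $1$ coming from the conjugation of $\ii E \opbw(\Lambda)$ via item (ii). Definition \ref{nonomosmooth} then directly yields the tame bound \eqref{stima cal R (4)}.

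The hard part is solving the homological equation with a controlled derivative loss: the three-wave combinations $\Lambda(k)\pm\Lambda(k-\xi)\pm\Lambda(\xi)$ are only weakly bounded from below on generic tori, producing the loss $\tau$ appearing in Lemma \ref{condizioni di non-risonanza} and forcing the use of a generic mass $m \in \mathcal{G}$. This is precisely why the Birkhoff step is carried out at the smoothing level, after the paradifferential reductions of Propositions \ref{prop off diagonal}--\ref{BLM-paradiff} have already disposed of the non-smoothing symbolic content; the calibration $N = \tau + 3$ in \eqref{N fissato} is then exactly what is needed so that, after losing $\tau$ in inverting the adjoint action and a further $2$ in conjugating $\ii E \opbw(\Lambda)$, one smoothing degree still survives in $\mathcal{R}^{(4)}$, which is the gain required in \eqref{stima cal R (4)} for the subsequent energy estimates.
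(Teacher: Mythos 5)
Your proof is correct and follows essentially the same route as the paper: construct $\Phi^{(3)}$ as the time-one flow $\Phi_{\mathcal F}^1(U)$ with $\mathcal F\in\mathcal{OS}_1(3)$ given by Lemma~\ref{eq omologica Birkhoff step} applied to $\mathcal Q_l\in\mathcal{OS}_1(\tau+3)$, conjugate term by term via Lemma~\ref{coniugazioni flusso smoothing}, and cancel the linear-in-$U$ pieces with the homological equation, the bottleneck being the $\mathcal S_2(1)$ remainder from conjugating $\ii E\,\opbw(\Lambda)$. Your bookkeeping of the smoothing orders ($N=\tau+3$, loss of $\tau$ in inverting the adjoint action, loss of $2$ in the $\Lambda$-conjugation) matches the paper's exactly.
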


\begin{proof}
We look for a smoothing operator ${\cal F} \in {\cal O}{\cal S}_1(3)$ 
and we consider the flow map 
$\Phi_{\cal F}^\tau(U)$. 
We then set $\Phi^{(3)}(U) := \Phi_{\cal F}^1(U)$. 
By applying Lemma \ref{coniugazioni flusso smoothing}, one gets that 
\begin{equation}\label{coniugi lemma smoothing 1}
\begin{aligned}
\Phi^{(3)}(U)^{- 1} \circ \partial_t \circ \Phi^{(3)}(U) & = 
\partial_t - {\cal F}(\ii E {\rm Op}^{bw}(\Lambda) U) + {\cal S}_2(3)  
\\
\Phi^{(3)}(U)^{- 1} \ii E {\rm Op}^{bw}(\Lambda) \Phi^{(3)}(U) & = 
\ii E {\rm Op}^{bw}(\Lambda) + [\ii E {\rm Op}^{bw}(\Lambda), {\cal F}(U)] + {\cal S}_2(1) 
\\
\Phi^{(3)}(U)^{- 1} {\cal Z}(U) \Phi^{(3)}(U) & = {\cal Z}(U) + {\cal S}_2( 2) 
\\
\Phi^{(3)}(U)^{- 1} {\cal Q}(U) \Phi^{(3)}(U) & 
\stackrel{\eqref{decomp quadratic smoothing}, \eqref{N fissato}}{=} 
{\cal Q}_l(U) + {\cal S}_2(\tau + 3)\,.
\end{aligned}
\end{equation}
By applying Lemma \ref{eq omologica Birkhoff step}, 
since ${\cal Q}_l \in {\cal O}{\cal S}_1(\tau + 3)$, then there exists 
${\cal F} \in {\cal O}{\cal S}_1(3)$ 
which solves 
\begin{equation}\label{applicazione omologica smoothing}
- {\cal F}(\ii E {\rm Op}^{bw}(\Lambda) U) + [\ii E {\rm Op}^{bw}(\Lambda), {\cal F}(U)]  
+ {\cal Q}_l(U) = 0\,. 
\end{equation}
Then \eqref{coniugi lemma smoothing 1}, \eqref{applicazione omologica smoothing} 
imply that ${\cal P}^{(4)}(U) := \Phi^{(3)}(U)^{- 1} {\cal P}^{(3)}(U) \Phi^{(3)}(U)$ 
has the form \eqref{op dopo birkhoff}, with ${\cal R}^{(4)} \in {\cal S}_2(1)$ 
and hence satisfying the claimed estimate \eqref{stima cal R (4)}. 
\end{proof}

\section{Energy estimates and proof of Theorem \ref{teo principale} concluded}\label{sezione finale stime energia}
In this section we conclude the proof of the main result of the paper, 
namely Theorem \ref{teo principale}. 
The main point is to provide an energy estimate for the reduced equation. 
\begin{equation}\label{bla bla eq per stima di energia}
\partial_t W + \ii E {\rm Op}^{bw}(\Lambda(\x)) W + {\cal Z}(U) W + {\cal R}^{(4)}(U) W = 0\,.
\end{equation}
First of all, let us consider the linear flow associated to the normal form equation
\begin{equation*}
\partial_t W + \ii E {\rm Op}^{bw}(\Lambda(\x)) W + {\cal Z}(U) W = 0\,,
\end{equation*}
which, by \eqref{fn Z BLM}, is equivalent to the scalar equation 
\begin{equation}\label{NF equation scalare}
\partial_t w + \ii {\rm Op}^{bw}\Big(\Lambda(\xi) + z(U; x, \xi) \Big) w = 0 \,,
\end{equation}
where $z$ is a real symbol in normal form (see Def. \ref{simboli forma normale}). 
The following Lemma is proved in \cite{BLM-growth}, Section 5.1. 

\begin{lemma}\label{NF flow Lemma}
For any $t, \tau \in [- T, T]$ (where $T$ is the same as in \eqref{ansatz u}), the flow ${\cal U}_z(\tau, t)$ 
associated to the equation \eqref{NF equation scalare} (with ${\cal U}_z(\tau, \tau) = {\rm Id}$) 
is well defined as a bounded linear operator $H^s \to H^s$ 
and it satisfies 
\begin{equation}\label{stima flusso forma normale BLM}
\| {\cal U}_z(\tau, t) w_0 \|_s \lesssim_s \| w_0 \|_s\,, 
\quad \text{uniformly w.r. to} \quad t , \tau \in [- T, T]\,. 
\end{equation}
for any $w_0 \in H^s$. 
\end{lemma}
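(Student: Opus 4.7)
The result is borrowed from \cite{BLM-growth}; the plan is to outline the two ingredients that combine to yield the uniform bound, and to isolate where the normal form structure of $z$ is essential.

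First I would establish well-posedness of the flow by Galerkin truncation. Projecting \eqref{NF equation scalare} onto frequencies $|\xi|\le n$ yields finite-dimensional linear ODEs whose coefficients depend continuously on $t$ (this continuity is inherited from \eqref{ansatz u} via Lemma \ref{paTsimbopsi}, together with the continuity of the maps $U\mapsto z(U;\cdot,\cdot)$ provided by Definition \ref{simboli non lineari}), and are therefore uniquely solvable. Passing to the limit requires the a priori energy estimate described below. Next I would derive that estimate by working in the interaction picture: setting $w(t)=e^{-\ii(t-\tau)\mathrm{Op}^{bw}(\Lambda)}v(t)$, the new unknown solves $\partial_t v=-\ii \widetilde z(t) v$ with $\widetilde z(t):=e^{\ii(t-\tau)\mathrm{Op}^{bw}(\Lambda)}\mathrm{Op}^{bw}(z(U(t);\cdot,\cdot))e^{-\ii(t-\tau)\mathrm{Op}^{bw}(\Lambda)}$. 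Since $\Lambda(\xi)$ is a real Fourier multiplier, the free flow $e^{-\ii t\mathrm{Op}^{bw}(\Lambda)}$ is unitary on every $H^s$, so it suffices to control $\|v(t)\|_s$. Moreover, $z$ being real makes $\mathrm{Op}^{bw}(z)$, and hence $\widetilde z(t)$, self-adjoint on $L^2$, which immediately gives conservation of $\|v\|_0$; the task therefore reduces to estimating the commutator $[\langle D\rangle^s,\widetilde z(t)]$.

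The main obstacle is this commutator estimate, and it is exactly where the normal form condition of Definition \ref{simboli forma normale} enters. On the Fourier side the matrix entry of $\widetilde z(t)$ connecting $\xi$ to $\xi+k$ carries a factor proportional to $\widehat z(U(t);k,(\xi+k/2))\,e^{\ii(t-\tau)(\Lambda(\xi+k)-\Lambda(\xi))}$, and the difference $\langle\xi+k\rangle^s-\langle\xi\rangle^s$ produced by the commutator is bounded, after Taylor expansion, by $C_s\langle\xi\rangle^{s-2}|G\xi\cdot k|+O(\langle\xi\rangle^{s-2}|k|^2)$. The normal form support restriction $|(\xi;k)|\le\langle\xi\rangle^\delta|k|^{-\tau}$ together with $|k|\le\langle\xi\rangle^\varepsilon$ and the rapid decay of $\widehat z(U;k,\cdot)$ in $k$ (coming from the $x$-regularity of the symbol, cf.\ Lemma \ref{non hom symbols Fourier}) then give a net gain of order $\langle\xi\rangle^{\delta-1}|k|^{-\tau-N}$, summable in $k$, which is what is needed for $\|[\langle D\rangle^s,\widetilde z(t)]v\|_0\lesssim_s\|v\|_s$ uniformly in $t$. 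The delicate point, handled in \cite{BLM-growth}, is that the Bony--Weyl cutoff $\eta_\varepsilon$ in \eqref{quantiWeyl} must be shown not to spoil this gain, and that the phase $e^{\ii(t-\tau)(\Lambda(\xi+k)-\Lambda(\xi))}$ produced by the conjugation can be integrated by parts in $t$ precisely on the complement of the normal form support, so that the only genuinely non-oscillatory contribution lives on the Diophantine-thin set where the bound above applies.

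Once this pseudodifferential commutator estimate is in hand, Gr\"onwall's inequality applied on $[-T,T]$ yields $\|v(t)\|_s^2\le C_s\|v(\tau)\|_s^2$ with $C_s$ independent of $t,\tau\in[-T,T]$, and hence \eqref{stima flusso forma normale BLM} by the unitarity of $e^{-\ii t\mathrm{Op}^{bw}(\Lambda)}$ on $H^s$.
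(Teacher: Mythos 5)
The paper's own proof of Lemma \ref{NF flow Lemma} is brief: it observes that $\mathrm{Op}^{bw}(z)=\mathrm{Op}^{\rm W}(\sigma_z)$ where $\sigma_z(U;x,\xi)=\sum_k\eta_\epsilon\big(\tfrac{|k|}{\langle\xi\rangle}\big)\widehat z(U;k,\xi)e^{\ii k\cdot x}$ is the truncated symbol appearing in \eqref{quantiWeyl}, notes that multiplication by the cutoff $\eta_\epsilon$ only shrinks the Fourier support and hence $\sigma_z$ is still in normal form in the sense of Definition \ref{simboli forma normale}, and then defers verbatim to \cite{BLM-growth}, Section 5.1. Your proposal attempts to reconstruct the BLM argument rather than only cite it; the general framework you describe (interaction picture, $L^2$ conservation from self-adjointness, commutator estimate relying on the normal form geometry) is a reasonable outline, but there are two substantive problems.

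First, the remark that ``the phase $e^{\ii(t-\tau)(\Lambda(\xi+k)-\Lambda(\xi))}$ produced by the conjugation can be integrated by parts in $t$ precisely on the complement of the normal form support'' does not apply here. Equation \eqref{NF equation scalare} contains only the normal form symbol $z$, whose Fourier coefficients $\widehat z(U;k,\xi)$ vanish identically outside the normal form region by Definition \ref{simboli forma normale}. There is therefore nothing to integrate by parts on the complement; the entire symbol is already supported on the ``Diophantine-thin'' set. You appear to be importing an oscillatory-phase step from the normal form \emph{construction} (where such integration by parts would eliminate the non-resonant part of a symbol), which is not what this lemma is about.

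Second, and more importantly, the final step does not close. A commutator estimate of the form $\|[\langle D\rangle^s,\widetilde z(t)]v\|_0\lesssim_s\|v\|_s$ (which, as you note, follows essentially from the fact that $z$ has order one and already holds without any normal form input), fed into a Gr\"onwall inequality, produces $\|v(t)\|_s\le e^{C(s)A|t-\tau|}\|v(\tau)\|_s$ where $A$ controls the size of $z$; since $z\in\Sigma_1^1$ depends on $U$ and $\|U(t)\|_\rho\lesssim\epsilon$, one has $A\sim\epsilon$, and then on a time interval of length $T\sim\epsilon^{-2}$ the exponent $AT\sim\epsilon^{-1}$ diverges. Even the improved commutator gain $\|[\langle D\rangle^s,\widetilde z(t)]v\|_0\lesssim\|v\|_{s-(1-\delta)}$ that you extract from the normal form support, combined with the conserved $L^2$ norm and interpolation, gives at best polynomial-in-$t$ growth, not the claimed uniform bound. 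The constant in \eqref{stima flusso forma normale BLM} depends only on $s$ (it is needed with a $T$-independent constant in the Duhamel/bootstrap argument of Section \ref{sezione finale stime energia}), and Gr\"onwall alone cannot produce that. The actual mechanism behind this uniformity lives in \cite{BLM-growth} and is not captured by your sketch. Finally, you attribute the handling of the Bony--Weyl cutoff to \cite{BLM-growth}, but that paper works with genuine Weyl quantization; the observation that the Bony truncation produces a Weyl symbol that is \emph{still} in normal form is precisely the one new ingredient the present paper supplies on top of the citation, and it should be recorded explicitly rather than attributed elsewhere.
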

\begin{proof}
The proof is exactly the same as the one made in \cite{BLM-growth}, Section 5.1. Indeed the operator ${\rm Op}^{bw}(z)$ is the Weil quantization of the {\it truncated} symbol 
$$
\sigma_z(U; x, \xi) = \sum_{k \in \Z^d} \eta_{\epsilon}\Big(\frac{|k|}{\langle \xi\rangle}\Big) \widehat z(U; k,\xi) e^{\ii k \cdot x}
$$
see \eqref{quantiWeyl}. Since $\widehat \sigma_z(U; k, \xi) = \eta_\e\Big(\frac{|k|}{\langle \xi\rangle}\Big) \widehat z(U; k,\xi)$, one has that also $\sigma_z$ is a symbol in normal form according to the definition \eqref{simboli forma normale}. Hence the arguments developed in \cite{BLM-growth} apply. 
\end{proof}
\noindent
We then denote by 
\[
{\cal U}_{\cal Z}(\tau, t) := \begin{pmatrix}
{\cal U}_z(\tau, t) & 0 \\
0 & \overline{{\cal U}_z(\tau, t)}
\end{pmatrix}\,.
\]
By Duhamel formula, solutions of \eqref{bla bla eq per stima di energia} satisfy 
\begin{equation}\label{W t Duhamel stima energia}
W(t) = {\cal U}_{\cal Z}(0, t) w_0 - \int_0^t {\cal U}_{\cal Z}(\tau, t) {\cal R}(U(\tau)) W(\tau)\, d \tau\,,
\end{equation}
and recall that, by the ansatz on $U(t)$, we have 
$\| U(t) \|_\rho \lesssim \e$,  $\forall t \in [- T, T]$,
for $\rho \gg 0$ large enough and some $T>0$. 
By the estimates \eqref{stima cal R (4)}, \eqref{stima flusso forma normale BLM}, 
one then has if $w_0 \in H^\rho$, 
\[
\| W(t ) \|_\rho 
\lesssim_\rho 
\| W_0 \|_\rho + \int_0^t \| U(\tau) \|_\rho^2 \| W(\tau) \|_\rho\, d \tau 
\lesssim_\rho  
\| W_0 \|_\rho + \e^2 \int_0^t \| W(\tau) \|_\rho\,d \tau\,.
\]
By Gronwall inequality, one then gets that 
\[
\| W(t) \|_\rho \leq C(\rho) e^{C(\rho)  \e^2 t} \| W_0 \|_\rho\,, \quad \forall t \in [- T, T]\,,
\]
for some constant $C(\rho) \gg 0$ large enough. 
By Propositions \ref{prop off diagonal}, \ref{BLM-paradiff}, \ref{BNF step} 
and 
\[
U(t) = \Big(\Phi^{(1)}(U(t)) \circ \Phi^{(2)}(U(t)) \circ \Phi^{(3)}(U(t)) \Big) [W(t)]\,,
\]
one deduces that 
\[
\| U(t) \|_\rho \sim_\rho \| W(t) \|_\rho\,,, \quad \forall t \in [- T, T]
\]
and therefore, 
\[
\| U(t) \|_\rho \leq C_1(\rho) e^{C(\rho)  \e^2 t} \| U_0 \|_\rho\,, 
\quad \forall t \in [- T, T]\,,
\]
for some constant $C_1(\rho) \gg 0$ large enough. 
By a standard bootstrap argument, 
the latter estimate implies that $T = T_\rho = O(\e^{- 2})$ and 
\[
\| U(t) \|_\rho \lesssim_\rho \| U_0 \|_\rho\,, \quad \forall t \in [- T_\rho, T_\rho]\,. 
\]
Clearly, by the smallness assumption on the initial datum 
$\| U_0 \|_\rho \leq \e$, one then gets that $\| U(t) \|_\rho \lesssim_\rho \e$, 
for any $t \in [- T_\rho, T_\rho]$. 
This is the estimate \eqref{stimathm1} in Theorem \ref{teo principale}.

\vspace{0.5em}
We now perform a boothstrap argument in order to show that if 
$s \geq \rho$ and $U_0\in{\bf H}^{s}$ (see \eqref{hcic}) then
\begin{equation}\label{induction hp bootstrap}
\begin{aligned}
&
U \in C^0 \Big( [- T_\rho, T_\rho], H^s \Big) \quad
\text{with}  \quad \| U(t) \|_s \leq C_*(s)\| U_0 \|_s\,, 
\quad  \forall t \in [- T_\rho, T_\rho]\,, \\
& \text{for some} \quad C_*(s) \gg 0 \quad \text{large enough}\,.
\end{aligned}
\end{equation} 
The latter claim implies the estimate \eqref{stimathm2}.
In order to prove the \eqref{induction hp bootstrap}
we argue by induction on $s \geq \rho$. 
If $s = \rho$, then the claimed statement is proved. 
Assume that the statement is true for some $s > \rho$ and let us prove it for $s + 1$. 
Let $U_0 \in {\bf H}^{s + 1}$. 
Then $W_0 \in {\bf H}^{s + 1}$ and by the induction hypothesis 
$U(t)$ and then $W(t)$ is in 
$C^0 \Big( [- T_\rho, T_\rho], {\bf H}^s \Big)$. 
By applying Lemma \ref{NF flow Lemma}, using that the remainder 
${\cal R}(U)$ in \eqref{W t Duhamel stima energia} 
is one smoothing (see \eqref{stima cal R (4)}), 
one has that 
\begin{equation}\label{draghi 0}
\begin{aligned}
& {\cal U}_{\cal Z}(\tau, t) W_0 \in {\bf H}^{s + 1}\,, 
\quad \| {\cal U}_{\cal Z}(\tau, t) W_0\|_{s + 1} \lesssim_s \| W_0 \|_{s + 1}\,, 
\quad \forall \tau, t \in \R\,, 
\\& 
W(\tau) \in {\bf H}^s \;\;\Longrightarrow \;\;{\cal R}(U(\tau))[W(\tau)] \in {\bf H}^{s + 1} \;\;
\Longrightarrow\;\; {\cal U}_{\cal Z}(\tau, t) {\cal R}(U(\tau))[W(\tau)] \in {\bf H}^{s + 1}\,, 
\\& 
\Big\| \int_0^t {\cal U}_{\cal Z}(\tau, t) {\cal R}(U(\tau)) W(\tau)\, d \tau \Big\|_{s + 1} 
\lesssim_s \e^2 \int_0^t \| W(\tau) \|_s\, d \tau\,. 
\end{aligned}
\end{equation}
Therefore, \eqref{W t Duhamel stima energia}, \eqref{draghi 0} imply that 
\begin{equation}\label{draghi 1}
W(t ) \in {\bf H}^{s + 1}\,, \quad 
\| W(t) \|_{s + 1} \lesssim_s \| W_0 \|_{s + 1} + \e^2 \int_0^t \| W(\tau) \|_s \, d \tau\,, 
\quad \forall t \in [- T_\rho, T_\rho]\,. 
\end{equation}
Using that (by the boundedness of the normal form transformations) 
\[
\| U(t) \|_{s + 1} \sim_s \| W(t) \|_{s + 1}, \quad \| U(t) \|_{s } \sim_s \| W(t) \|_{s}\,, 
\quad  \| W_0 \|_{s + 1} \sim_s  \| U_0 \|_{s + 1}\,, 
\quad \| W_0 \|_{s} \sim_s  \| U_0 \|_{s}\,, 
\]
we note that 
\eqref{draghi 1} implies 
\begin{equation*}
U(t ) \in {\bf H}^{s + 1}\,, 
\quad \| U(t) \|_{s + 1} \leq K(s) \Big( \| U_0 \|_{s + 1} + \e^2 \int_0^t \| U(\tau) \|_s \, d \tau \Big)\,, 
\quad \forall t \in [- T_\rho, T_\rho]\,. 
\end{equation*}
for some constant $K(s) \gg 0$ large enough. 
Hence, by the induction hypothesis \eqref{induction hp bootstrap}, 
the latter inequality implies that, for any $t \in[- T_\rho, T_\rho]$,
\begin{equation*}
\| U(t) \|_{s + 1} \leq K(s) \| U_0 \|_{s + 1} + K(s) C_*(s) T_\rho \e^2 \| U_0 \|_s 
\leq C_*(s + 1) \| U_0 \|_{s + 1}
\end{equation*}
with $C_*(s + 1) := K(s) (1 + C_*(s))$ and using that $T_\rho \e^{- 2} \leq 1$. 
The claimed statement \eqref{induction hp bootstrap} 
has then been proved for $s + 1$. 
The proof of Theorem \ref{teo principale} is then concluded.

\appendix
\section{Non-resonance conditions.}\label{sezione non rionanza}
In this sections we verify the non resonance conditions appearing in the Birkhoff normal form. 
We need to provide suitable lower bounds for the {\it three wave interactions}
\begin{equation}\label{divisors BNF}
\begin{aligned}
\phi^{\s,\s'}(\x,k):=\Lambda(\xi + k) +\s \Lambda(\xi) +\s' \Lambda(k)\,,\quad \x,k\in \mathbb{Z}^{d}\,,\;\;\;
\s,\s'\in\{\pm\}\,,
\end{aligned}
\end{equation}
where $\Lambda(\x)$ is the symbol defined in \eqref{def Lambda xi},
for ``most'' choices 
 of the parameter $m\in(0,+\infty)$.
This is the content of the following Lemma.

\begin{lemma}\label{condizioni di non-risonanza}
There exists a set $\mathcal{G}\subseteq (0, + \infty)$ of Lebesgue measure $1$
such that for any $m \in {\cal G}$
there exist $\tau = \tau(d) \geq 0$ and  $\gamma>0$ such that,
for all $\x,k\in \mathbb{Z}^{d}$, $\s,\s'\in\{\pm\}$, one has
\begin{equation}\label{lowerstima}
|\phi^{\s,\s'}(\x,k)|\geq 
\frac{\gamma}{\langle \xi\rangle^\tau \langle k \rangle^\tau}\,.
\end{equation}
\end{lemma}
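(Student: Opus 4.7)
The plan is to observe that each divisor $\phi^{\sigma,\sigma'}(\xi,k)$ depends affinely on the mass parameter $m$ with a \emph{nonzero} leading coefficient, so that for each fixed $(\xi,k,\sigma,\sigma')$ the set of resonant $m$'s is a tiny interval; a standard Borel--Cantelli / measure-counting argument then yields a full-measure set $\mathcal{G}$ of admissible masses.

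More precisely, I would first expand the divisor using $\Lambda(\xi)=G\xi\cdot\xi+m$:
\begin{equation*}
\phi^{\sigma,\sigma'}(\xi,k)=G(\xi+k)\cdot(\xi+k)+\sigma G\xi\cdot\xi+\sigma' Gk\cdot k+(1+\sigma+\sigma')\,m.
\end{equation*}
The coefficient of $m$ equals $3$ if $\sigma=\sigma'=+$ and $\pm 1$ in the remaining cases, hence is always nonzero. Moreover, in the purely positive case $(+,+)$ one has $\phi^{++}(\xi,k)\ge 3m$ uniformly (since $G\xi\cdot\xi+G\xi\cdot k+Gk\cdot k=G(\xi+k/2)\cdot(\xi+k/2)+\tfrac34 Gk\cdot k\ge 0$), so no non-resonance condition is needed there. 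For the remaining three sign choices, $\phi^{\sigma,\sigma'}(\xi,k)$ is of the form $P_{\sigma,\sigma'}(\xi,k)\pm m$ for some integer-combination $P_{\sigma,\sigma'}(\xi,k)\in\mathbb{R}$ independent of $m$.

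Next I would fix parameters $\tau>d$ (so that $\sum_{\xi,k\in\mathbb{Z}^d}\langle\xi\rangle^{-\tau}\langle k\rangle^{-\tau}<\infty$) and $\gamma>0$, and define the ``bad set''
\begin{equation*}
\mathcal{B}_\gamma:=\bigcup_{\sigma,\sigma'\in\{\pm\}}\bigcup_{\xi,k\in\mathbb{Z}^d}\Bigl\{m>0:\;|\phi^{\sigma,\sigma'}(\xi,k)|<\frac{\gamma}{\langle\xi\rangle^\tau\langle k\rangle^\tau}\Bigr\}.
\end{equation*}
For each fixed $(\xi,k,\sigma,\sigma')$ with $(\sigma,\sigma')\ne(+,+)$, the corresponding set is an interval in $m$ of length at most $2\gamma\langle\xi\rangle^{-\tau}\langle k\rangle^{-\tau}$, because $\phi^{\sigma,\sigma'}$ has slope $\pm 1$ in $m$. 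Consequently, for any bounded window $[0,M]$,
\begin{equation*}
\mathrm{meas}\bigl(\mathcal{B}_\gamma\cap[0,M]\bigr)\;\le\;C_d\,\gamma\sum_{\xi,k\in\mathbb{Z}^d}\frac{1}{\langle\xi\rangle^\tau\langle k\rangle^\tau}\;\le\;C_d'\,\gamma.
\end{equation*}

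Finally, setting $\mathcal{G}:=\bigcup_{n\in\mathbb{N}}\bigl((0,+\infty)\setminus\mathcal{B}_{1/n}\bigr)$, one has $\mathrm{meas}\bigl((0,M)\setminus\mathcal{G}\bigr)\le \inf_n C_d'/n=0$ for every $M>0$, so $\mathcal{G}$ has full Lebesgue measure in $(0,+\infty)$. For any $m\in\mathcal{G}$ there exists $n\in\mathbb{N}$ with $m\notin\mathcal{B}_{1/n}$, which gives \eqref{lowerstima} with $\gamma:=1/n$ (taking $\gamma$ smaller if needed to absorb the case $(+,+)$, which is harmless). The only point requiring mild care is to check that the $m$-independent part $P_{\sigma,\sigma'}(\xi,k)$ does not introduce systematic cancellations forcing $\phi^{\sigma,\sigma'}\equiv 0$, but since the $m$-slope is nonzero the argument is insensitive to $P_{\sigma,\sigma'}$; the main (and essentially only) obstacle in this appendix is just the convergence of the $\xi,k$-sum, which is secured by $\tau>d$.
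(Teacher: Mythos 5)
Your argument is correct, but it takes a genuinely more direct route than the paper. The paper introduces the vector $\omega_g\in\R^{d_*}$ (with $d_*=d(d+1)/2$) of independent entries of $G$, imposes a diophantine condition $|\omega_g\cdot\ell\pm m|\geq \gamma\langle\ell\rangle^{-\tau_*}$ on $(\omega_g,m)$ jointly over $\ell\in\Z^{d_*}$ (Lemma \ref{leb dio m}), and then rewrites each divisor $\phi^{\sigma,\sigma'}(\xi,k)$ in the form $\omega_g\cdot\ell\pm m$ with $\langle\ell\rangle\lesssim\langle\xi\rangle\langle k\rangle$ (or $\langle\xi\rangle^2\langle k\rangle$ in the case $(+,-)$), finally reading off \eqref{lowerstima}. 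You instead estimate, for each fixed $(\xi,k,\sigma,\sigma')$ with $(\sigma,\sigma')\ne(+,+)$, the measure of the bad set of masses directly from the fact that $\phi^{\sigma,\sigma'}(\xi,k)=P_{\sigma,\sigma'}(\xi,k)\pm m$ is affine in $m$ with slope of modulus $1$, and you sum the resulting intervals over $\xi,k$ with $\tau>d$. Both routes are valid. Yours bypasses the intermediate frequency vector $\omega_g$ and the separate measure Lemma \ref{leb dio m}, and in fact produces a slightly better exponent (any $\tau>d$ works, versus $\tau\sim 2\tau_*$ with $\tau_*>d_*$ in the paper). The paper's formulation has the advantage of isolating a reusable diophantine hypothesis on $(G,m)$, of the type that appears systematically in KAM/normal-form arguments and would persist unchanged if the dependence of $\phi$ on $m$ were more complicated; for this one-step quadratic normal form, your union-bound is simpler and entirely sufficient. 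Minor comments: the bound on $\operatorname{meas}(\mathcal{B}_\gamma)$ is actually global (the restriction to $[0,M]$ is unnecessary); and your observation $\phi^{++}(\xi,k)=2(G\xi\cdot\xi+G\xi\cdot k+Gk\cdot k)+3m\geq 3m$ correctly dismisses the sign-definite case, handled in the paper by the same positivity remark.
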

\noindent
The rest of the section is devoted to the proof of the lemma above.
%
Let us denote by 
\begin{equation}\label{vettOmegaGG}
\omega_g \in \R^{d_*}\,,\quad d_* := \frac{d (d - 1)}{2} + d \,,
\end{equation}
 the vector obtained by putting in a vector the matrix 
 elements (upon the diagonal) of the matrix $G$
 in \eqref{matmetrica}, namely 
 \[
 \omega_g := (g_{11}, \ldots, g_{1 d}, g_{22}, \ldots, g_{2d}, 
 \ldots, g_{(d - 1)(d - 1)}, g_{(d - 1) d}, g_{dd})\,. 
 \]

\noindent
Fix $\tau_* \geq d_*$. We define the set
\begin{equation}\label{diofantea metrica}
{\cal G} := \Big\{ m > 0 : \exists \gamma> 0 \quad \text{such that} \quad  
|\omega_g \cdot \ell \pm  m| \geq 
\frac{\gamma}{\langle \ell \rangle^{\tau_*}}\,, 
\quad 
\forall \ell \in \Z^{d_*}\, \Big\}\,. 
\end{equation}
The following Lemma holds 
\begin{lemma}\label{leb dio m}
The Lebesgue measure of $(0, + \infty) \setminus {\cal G}$ is equal to zero.
\end{lemma}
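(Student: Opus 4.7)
The plan is a standard measure-exclusion argument of Borel--Cantelli type. First, I would rewrite the complement of $\mathcal{G}$ as a nested intersection: observe that
\[
(0,+\infty)\setminus\mathcal{G} \;=\; \bigcap_{n \geq 1} B_{1/n}\,, \qquad B_\gamma := \bigcup_{\ell\in\Z^{d_*}}\bigcup_{\sigma\in\{\pm\}} R_{\ell,\sigma}(\gamma)\,,
\]
where
\[
R_{\ell,\sigma}(\gamma) := \Big\{\, m>0 \,:\, |\omega_g\cdot\ell+\sigma m| < \tfrac{\gamma}{\langle\ell\rangle^{\tau_*}}\,\Big\}\,.
\]
Indeed, $m\notin\mathcal{G}$ iff for every $\gamma>0$ the Diophantine inequality fails for some $(\ell,\sigma)$. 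Since the $B_\gamma$ are monotone in $\gamma$, it suffices to show that $\mathrm{meas}\bigl(B_\gamma\cap(0,R)\bigr)\to 0$ as $\gamma\to 0^+$ for each fixed $R>0$, and then let $R\to+\infty$ by monotone convergence.

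Second, for each fixed $\ell\in\Z^{d_*}$ and $\sigma\in\{\pm\}$, the set $R_{\ell,\sigma}(\gamma)$ is (the intersection with $(0,+\infty)$ of) an open interval of length at most $2\gamma/\langle\ell\rangle^{\tau_*}$, because it is defined by a one-sided linear condition on $m$ with slope $\sigma=\pm 1$. Summing over all $\ell$ and the two choices of sign,
\[
\mathrm{meas}\bigl(B_\gamma\cap(0,R)\bigr) \;\leq\; 2\sum_{\sigma\in\{\pm\}}\sum_{\ell\in\Z^{d_*}}\frac{2\gamma}{\langle\ell\rangle^{\tau_*}} \;\leq\; C\,\gamma\sum_{\ell\in\Z^{d_*}}\frac{1}{\langle\ell\rangle^{\tau_*}}\,.
\]

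Third, the remaining ingredient is convergence of the series $\sum_{\ell\in\Z^{d_*}}\langle\ell\rangle^{-\tau_*}$, which holds as soon as $\tau_*>d_*$ (the statement ``$\tau_*\geq d_*$'' should be read as ``$\tau_*$ large enough''; otherwise one simply enlarges $\tau_*$ at this step, which only makes the lower bound in \eqref{diofantea metrica} weaker and hence preserves the validity of all subsequent arguments). With this,
\[
\mathrm{meas}\bigl(B_\gamma\cap(0,R)\bigr)\leq C(\tau_*)\,\gamma \xrightarrow[\gamma\to 0]{} 0\,,
\]
which combined with the nested structure gives $\mathrm{meas}\bigl((0,R)\setminus\mathcal{G}\bigr)=0$. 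Letting $R\to+\infty$ yields the conclusion.

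The argument is essentially one-line once set up; the only mildly delicate point is the convergence of the series, which is what fixes the admissible range of $\tau_*$. No other obstacle is expected, since the problem is genuinely a classical Diophantine measure estimate for the affine family $m\mapsto \omega_g\cdot\ell\pm m$.
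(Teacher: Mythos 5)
Your proposal is correct and follows essentially the same approach as the paper: write $\mathcal{G}^c$ as a nested intersection over $\gamma$ of unions of intervals of length $O(\gamma\langle\ell\rangle^{-\tau_*})$, sum the convergent series, and let $\gamma\to 0$. Your observation that convergence of $\sum_{\ell\in\Z^{d_*}}\langle\ell\rangle^{-\tau_*}$ requires the strict inequality $\tau_*>d_*$ rather than the paper's stated ``$\tau_*\geq d_*$'' is a genuine, though easily repaired, imprecision in the paper's setup.
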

\begin{proof}
A direct calculation shows that 
\[
\begin{aligned}
& {\cal G}^c := (0,+ \infty) \setminus {\cal G} = 
\cap_{\gamma > 0} \cup_{\ell \in \Z^\nu} {\cal R}_\ell(\gamma), 
\\& 
{\cal R}_\ell(\gamma) := \Big\{ m > 0 : 
|\omega_g \cdot \ell \pm m| < \frac{\gamma}{\langle \ell \rangle^{\tau_*}} \Big\}\,. 
\end{aligned}
\]
Clearly $|{\cal R}_\ell (\gamma)| \lesssim \gamma \langle \ell \rangle^{- \tau_*}$, implying that 
\[
|\cup_{\ell \in \Z^\nu} {\cal R}_\ell(\gamma)| 
\lesssim \sum_{\ell \in \Z^{d_*}} \gamma \langle \ell \rangle^{- \tau_*} 
\lesssim \gamma\,.
\]
This implies that $| {\cal G}^c| = 0$. 
\end{proof}

\begin{proof}[{Proof of Lemma \ref{condizioni di non-risonanza}}]
Clearly, the function $\phi^{\s,\s'}(\x,k)$
 in \eqref{divisors BNF} 
 is very easy to estimate in the case $\s=\s'=+$. Indeed $G$ is positive definite and  therefore 
\[
|\Lambda(\xi + k) + \Lambda(\xi) + \Lambda(k)| \geq 3 m + |\xi + k|^2 + |\xi|^2 + |k|^2
\]
which is bounded away from zero. 

We now  estimate from below $\phi^{\s,\s'}(\x,k)$ in the cases
$\s=-\s'=+$ or $\s=\s'=-$.
Let $\xi, k \in \Z^d$. 
A direct calculation shows that 
\[
\begin{aligned}
\Lambda(\xi + k) - \Lambda(\xi) - \Lambda(k) & = 2 \big\langle G \xi\,,\, k \big\rangle - m 
= 2 \Big( \sum_{i = 1}^d g_{ii} \xi_i k_i 
+ \sum_{i = 1}^d \sum_{j = 1}^{i - 1} g_{i j} (\xi_i k_j + \xi_j k_i)  \Big) - m\,.
\end{aligned}
\]
By the diophantine condition \eqref{diofantea metrica}, one then obtains that for some $\gamma \in (0, 1)$,
\begin{equation}\label{def f xi k}
\begin{aligned}
& |\Lambda(\xi + k) - \Lambda(\xi) - \Lambda(k)| \geq \frac{\gamma}{f(k, \xi)^{\tau_*}} 
\quad \text{where} 
\\& 
f(k, \xi) := 1 + 2 \sum_{i = 1}^d |\xi_i k_i| 
+ 2 \sum_{1 = 1}^d \sum_{j = 1}^{i - 1} |\xi_i k_j + \xi_j k_i|\,. 
\end{aligned}
\end{equation}
We note that
\[
\begin{aligned}
|f(k, \xi)| \lesssim 1 + \sum_{i = 1}^d |\xi_i| |k_i| 
+ \sum_{i, j = 1}^d |\xi_i| |k_j| \leq c(d)\langle \xi \rangle \langle k \rangle
\end{aligned}
\]
for some constant $c(d) \geq 1$.  Hence 
\[
|\Lambda(\xi + k) - \Lambda(\xi) - \Lambda(k)| \geq \frac{\gamma_1}{\langle \xi \rangle^{\tau_*} \langle k \rangle^{\tau_*}} \quad \text{for some} \quad \gamma_1 \ll \gamma\,. 
\]
Similarly, one computes for any $\xi, k \in \Z^d$, 
\[
\begin{aligned}
\Lambda(\xi + k) + \Lambda(\xi) - \Lambda(k) 
& = 2 \| \xi \|_g^2 + 2 \big\langle G \xi, k \big\rangle + m 
\\& 
= 2 \big\langle G \xi\,,\, \xi + k \big\rangle  + m 
\\& 
= 2 \Big( \sum_{i = 1}^d g_{ii} \xi_i (\xi + k)_i 
+ \sum_{i = 1}^d \sum_{j = 1}^{i - 1} g_{i j} (\xi_i (\xi + k)_j + \xi_j (\xi + k)_i)  \Big) + m\,.
\end{aligned}
\]
Hence, using again the diophantine condition \eqref{diofantea metrica} 
and recalling the definition of $f$ in \eqref{def f xi k}, one obtains that 
\[
|\Lambda(\xi + k) + \Lambda(\xi) - \Lambda(k)| \geq \frac{\gamma}{f(\xi + k, \xi)^{\tau_*}}\,. 
\]
Moreover 
\[
\begin{aligned}
f(\xi + k, \xi) & \lesssim 1 + \sum_{i = 1}^d |\xi_i| |\xi_i + k_i| + \sum_{i, j = 1}^d |\xi_i| |\xi_j + k_j|  
\\& 
\lesssim 1 + \sum_{i, j = 1}^d |\xi_i| |\xi_j| 
+ \sum_{i, j = 1}^d |\xi_i| |k_j| \leq c(d)(\langle \xi \rangle \langle k \rangle + \langle \xi \rangle^2)
\end{aligned}
\]
for some constant $c(d) \geq 1$. 
This implies that 
\[
|\Lambda(\xi + k) + \Lambda(\xi) - \Lambda(k)| \geq 
\frac{\gamma_1}{\langle \xi \rangle^{2 \tau_*} \langle k \rangle^{\tau_*}}\,,
\]
and hence Lemma \ref{condizioni di non-risonanza} follows.
\end{proof}

\def\cprime{$'$}



\end{document}